\newcommand{\CC}{c}
\newcommand{\R}{\mathbb{R}}
\newtheorem{definition}{Definition}
\newtheorem{theorem}{Theorem}
\newtheorem{proposition}{Proposition}
\newtheorem{lemma}{Lemma}
\newtheorem{example}{Example}
\newtheorem{remark}{Remark}
\providecommand{\E}{\operatorname{E}}
\newcommand{\diff}{\,\mathrm{d}}
\newcommand{\norm}[1]{\left\lVert#1\right\rVert}
\begin{document}
\title{Splitting integrators for stochastic Lie--Poisson systems}
\author{Charles-Edouard Br\'ehier}
\address{Charles-Edouard Br\'ehier\\Univ Lyon, Universit\'e Claude Bernard Lyon 1, CNRS UMR 5208, Institut Camille Jordan, 43 blvd. du 11 novembre 1918, 
F--69622 Villeurbanne cedex, France}
\email{brehier@math.univ-lyon1.fr}
\urladdr{http://math.univ-lyon1.fr/~brehier/}

\author{David Cohen}
\address{David Cohen\\Mathematical Sciences, Chalmers University of Technology and University of Gothenburg, SE--41296 Gothenburg, Sweden}
\email{david.cohen@chalmers.se}
\urladdr{http://www.math.chalmers.se/~cohend/}

\author{Tobias Jahnke}
\address{Tobias Jahnke\\Department of Mathematics, Institute for Applied and Numerical Mathematics, Karlsruhe Institute of Technology, Englerstr. 2, 
D--76131 Karlsruhe, Germany}
\email[3]{tobias.jahnke@kit.edu}
\urladdr[3]{http://www.math.kit.edu/ianm3/~jahnke/}

\begin{abstract}
%Efficient numerical discretisations of stochastic Poisson systems are studied.  
%We put a particular emphasis on stochastic Lie--Poisson problems, for which, 
%extending ideas from \cite{MR1246065} for deterministic problems, we propose and analyse 
%explicit stochastic Poisson integrators preserving Casimirs. 
%Examples of applications are random perturbations of Maxwell--Bloch's equations, 
%of the equations for a rigid body, and of the sine--Euler equations.
We study stochastic Poisson integrators for a class of stochastic Poisson systems driven by Stratonovich noise. Such geometric integrators preserve Casimir functions and the Poisson map property. For this purpose, we propose explicit stochastic Poisson integrators based on a splitting strategy, and analyse their qualitative and quantitative properties: preservation of Casimir functions, existence of almost sure or moment bounds, asymptotic preserving property, and strong and weak rates of convergence. The construction of the schemes and the theoretical results are illustrated through extensive numerical experiments for three examples of stochastic Lie--Poisson systems, namely: stochastically perturbed Maxwell--Bloch, rigid body and sine--Euler equations.
\end{abstract}

\keywords{stochastic Poisson systems; splitting schemes; Poisson integrators; strong and weak rates of convergence; asymptotic preserving schemes; Maxwell--Bloch equations; rigid body equations; sine--Euler equations}

\maketitle

\section{Introduction}\label{sec:intro}

Hamiltonian ordinary differential equations and their generalisation, Poisson systems, 
are extensively used as mathematical models to describe the dynamical evolution of various physical systems 
in science and engineering. The flow of a Hamiltonian system is known to be a symplectic map, whereas the flow of a Poisson system is a Poisson map.
%The flow of Hamiltonian, resp. Poisson, systems is know to be a symplectic map, resp. a Poisson map. 
%\footnote{\tobi{TJ: I have a doubt that this construction with ``resp.'' is correct, but I am not sure. 
%As far as I know ``respectively'' is usually used at the end, e.g. ``The values of x and y are 17 and 23, respectively.'' Here is an possible alternative: ``The flow of a Hamiltonian system is known to be a symplectic map, whereas the flow of a Poisson system is a Poisson map.''}}
This is a key property that the flow of a numerical method should also have.  
The recent years have thus witnessed a large amount of research activities in the design and numerical analysis of symplectic numerical schemes, resp. Poisson integrators, for deterministic (non-canonical) Hamiltonian systems, see for instance the classical monographs \cite{Sanz-Serna1994,rl,HLW02,bcGNI15} and references therein. 

This research has naturally come to the realm of stochastic Hamiltonian systems. Without being too exhaustive, we mention the works 
\cite{Milstein2002,Milstein2002a,w07,MR2491434,m10,bb12,
mdd12,MR3094570,MR3218332,MR3195572,MR3552716,MR3579605,MR3882980,MR3873562,MR3952248,chjs21}  
on the numerical analysis of symplectic methods for stochastic Hamiltonian systems.  

Since symplectic methods for stochastic Hamiltonian systems offer advantages compared to 
standard numerical methods, as observed in the above list of references, 
it is natural to ask if one can derive numerical integrators respecting the structure of 
stochastic Poisson systems of the Stratonovich form  
\begin{equation*}
\left\lbrace
\begin{aligned}
\diff y(t)&=B(y(t))\nabla H(y(t))\,\diff t+\sum_{k=1}^mB(y(t))\nabla \widehat H_k(y(t))\circ\,\diff W_k(t)\\
y(0)&=y_0,
\end{aligned}
\right.
\end{equation*}
with Hamiltonian functions $H,\widehat{H}_1,\ldots,\widehat{H}_m\colon\R^d\to\R$, 
a structure matrix $B\colon \R^d \to \R^{d\times d}$, and independent 
standard real-valued Wiener processes $W_1,\ldots,W_m$, see Section~\ref{sec:poisson} for details on the notation. 

%Despite the fact that stochastic Poisson systems   
%are used as models to describe diverse random phenomena, see below and 
%\cite{MR629977,misawa94,misawa99,MR2198598,MR2408499,MR2502472,MR3747641,MR2644322,MR2970274,MR3210739,wwc21} for instance, 
%we are only aware of the very recent study on Poisson integrators for stochastic 
%differential equations (SDEs) from \cite{hong2020structurepreserving} (that we discuss further below). 

{Stochastic Poisson systems are popular models to describe diverse random phenomena, see below and \cite{MR629977,misawa94,misawa99,MR2198598,MR2408499,MR2502472,MR3747641,MR2644322,MR2970274,MR3210739,wwc21} for instance. However, to the best of our knowledge, there has been no general study of integrators for stochastic Poisson systems which respect their geometric properties in the literature so far except the recent work~\cite{hong2020structurepreserving}. In this manuscript, we intend to fill this gap and we study the notion of stochastic Poisson integrators 
(see Definition~\ref{defPI} and \cite[Theorem~3.1]{hong2020structurepreserving}): such integrators need to be Poisson maps (see Definition~\ref{def:Pmap}) and to preserve the Casimir functions of the system. Imposing these conditions is natural: indeed we prove that the flow of the stochastic Poisson system is a Poisson map (see Theorem~\ref{th:flowP}) and also preserves Casimir functions. In addition, the present notion of stochastic Poisson integrators is a natural generalisation of the notion of Poisson integrators for deterministic Poisson systems.

The main contribution of this manuscript is the analysis of a class of explicit stochastic Poisson integrators, see equation~\eqref{slpI}, based on a splitting strategy. The splitting strategy is often applicable for stochastic Lie--Poisson systems, which have a structure matrix $B(y)$ which depends linearly on $y$. The construction of the scheme is illustrated for stochastic perturbations of three systems which have been studied extensively in the deterministic case: Maxwell--Bloch, rigid body and sine--Euler equations. Note that these examples give stochastic differential equations (SDEs) with non-globally Lipschitz drift and diffusion coefficients, thus standard explicit schemes such as the Euler--Maruyama method are not expected to converge (strongly or weakly) or even to satisfy moment bounds. Instead, under appropriate assumptions, we prove that the proposed integrators converge strongly and weakly, with rates $1/2$ and $1$ respectively, see Theorem~\ref{thm-general}. Indeed, if one assumes that the system admits a Casimir function with compact level sets (which is the case for the rigid body and the sine--Euler equations), both the exact and the numerical solutions of the stochastic Poisson systems remain bounded almost surely, uniformly with respect to the time step size. Our main convergence result, Theorem~\ref{thm-general}, is illustrated with extensive numerical experiments.
}

{
On top of that, we study the properties of the stochastic Poisson systems (see Subsection~\ref{sec:multi}) and stochastic Poisson integrators (see Subsection~\ref{APsplit}) in a multiscale regime, namely when the Wiener processes are approximated by a smooth noise. The proposed splitting schemes are asymptotic preserving in this diffusion approximation regime, in the sense of the notion recently introduced in~\cite{BRR}. This property, which is not satisfied by standard integrators, is illustrated with numerical experiments.
}

%The present publication fills parts of this gap: 
%For the above SDE, 
%we propose a splitting strategy to construct explicit stochastic Poisson integrators 
%for a broad class of stochastic Lie--Poisson systems (where the structure matrix $B(y)$ depends linearly on $y$). 
%Examples of applications are random perturbations of the Maxwell--Bloch equations from laser-matter dynamics, 
%of the rigid body systems \tobi{(``system'' instead of ``systems''?)}, or of the classical sine--Euler equations from fluid dynamics. 
%Under the assumption that the above SDE admits a Casimir function with compact level sets, we show   
%strong and weak rates of convergence of the proposed splitting integrator in Theorem~\ref{thm-general}. 
%This general result can then directly be applied to show strong and weak convergence, with rates, of an explicit stochastic 
%Poisson integrator for the stochastic rigid body system and the stochastic sine--Euler equations, see Propositions~\ref{thm-srb}~and~\ref{thm-se}. 
%These are SDEs with non-globally Lipschitz continuous coefficients. 
%Furthermore, we discuss a diffusion approximation result which justify the Stratonovich interpretation of the noise 
%in the above SDE. This is done by investigating a multiscale system, see Subsection~\ref{sec:multi}. 
%A variant of the splitting scheme for stochastic Poisson systems can be applied to this multiscale system giving  
%a so-called asymptotic preserving scheme, see Subsection~\ref{APsplit}. 

{
Let us now compare our contributions with existing works. As already mentioned, the notion of stochastic Poisson integrators is a natural generalisation of the notion of Poisson integrators for deterministic systems. In the stochastic case, we are only aware of the recent work~\cite{hong2020structurepreserving}, where techniques which differ from ours are employed. First, in~\cite{hong2020structurepreserving}, the proof that the flow is a Poisson map consists in using the Darboux--Lie theorem to rewrite the stochastic Poisson system into a canonical form, i.\,e. as a stochastic Hamiltonian system, for which it is already known that the flow is a symplectic map. On the contrary, our approach to prove Theorem~\ref{th:flowP} below is more direct and extends the approach considered in~\cite[Chapter VII]{HLW02} for the deterministic case. Second, the authors of~\cite{hong2020structurepreserving} design stochastic Poisson integrators by starting from a stochastic symplectic scheme for the canonical version, and then by coming back to the original variables. Note that the transformations between the non canonical and canonical variables are often found by solving partial differential equations, and that symplectic schemes are usually implicit. Our approach is more direct and leads to explicit splitting schemes. In particular, for the stochastic rigid body system, the scheme proposed in~\cite{hong2020structurepreserving} is based on the midpoint rule and is thus implicit, whereas the scheme proposed in our work is explicit, and we are able to prove strong and weak convergence results.
}

The paper is organized as follows. Section~\ref{sec:poisson} is devoted to the setting and to the description of the main properties of stochastic Poisson systems, namely the preservation of Casimir functions and the Poisson map property (Theorem~\ref{th:flowP}, see Subsection~\ref{sec:Pmap}). The three main examples of stochastic Lie--Poisson systems (Maxwell--Bloch, rigid body and sine--Euler equations) are introduced in Subsection~\ref{sec:examples}. The diffusion approximation regime is presented in Subsection~\ref{sec:multi}. Section~\ref{sec:integrators} presents the main theoretical contributions of this work: we introduce the notion of stochastic Poisson integrators (Definition~\ref{defPI}) and we propose a class of such integrators using a splitting technique. The main convergence result (Theorem~\ref{thm-general}) is stated and proved in Subsection~\ref{sec:cv} (using an auxiliary result proved in Appendix~\ref{app-auxlem}): under appropriate assumptions, the proposed explicit splitting stochastic Poisson integrators converge in strong and weak senses, with orders $1/2$ and $1$ respectively. The asymptotic preserving property in the diffusion approximation regime is studied in Section~\ref{APsplit}. Finally, Section~\ref{sect-numexp} presents numerical experiments using the proposed splitting stochastic Poisson integrators and variants for the three examples of stochastic Lie--Poisson systems (Maxwell--Bloch, rigid body and sine--Euler equations). We illustrate various qualitative and quantitative properties, which show the superiority of the proposed schemes compared with existing methods.

%The outline of the paper is as follows. 
%We begin the exposition by introducing some notation, defining the SDEs of interest 
%and the concept of Poisson maps, as well as by presenting a diffusion--approximation result. 
%In Section~\ref{sec:integrators}, we focus on the presentation and convergence analysis of explicit stochastic Poisson integrators, 
%based on a splitting strategy, for stochastic Lie--Poisson systems. We also show that such splitting integrators 
%satisfy an asymptotic property when considering the multiscale SDE from the diffusion-approximation result. 
%In Section~\ref{sect-numexp}, we numerically illustrate the qualitative and quantitative properties of 
%these splitting integrators when applied to stochastic versions of the Maxwell--Bloch equations from laser-matter dynamics, 
%the rigid body equations, and the sine--Euler equations (a finite dimensional discretisation of 
%Euler's equations in fluid dynamics). The proof of an auxiliary lemma is presented in Appendix~\ref{app-auxlem}. 

\section{Stochastic Poisson and Lie--Poisson systems}\label{sec:poisson}

In this section, we set notation and introduce the stochastic differential equations studied in this article, 
namely \emph{stochastic (Lie--)Poisson systems}. 
We then state the main properties of such systems and give several examples
for which stochastic Poisson integrators are designed and analysed in Sections~\ref{sec:integrators}~and~\ref{sect-numexp}. 
We conclude this section with a diffusion approximation result justifying why considering stochastic Poisson systems 
with a Stratonovich interpretation of the noise is relevant.

\subsection{Setting and stochastic Poisson dynamics}

Let $d,m$ be positive integers: $d$ is the dimension of the considered system and $m$ is the dimension 
of the stochastic perturbation. We study \emph{stochastic Poisson systems} of the type
\begin{equation}\label{prob}
\left\lbrace
\begin{aligned}
\diff y(t)&=B(y(t))\nabla H(y(t))\,\diff t+\sum_{k=1}^mB(y(t))\nabla \widehat H_k(y(t))\circ\,\diff W_k(t),\\
y(0)&=y_0,
\end{aligned}
\right.
\end{equation}
with \emph{Hamiltonian functions} 
$H,\widehat{H}_1,\ldots,\widehat{H}_m\colon\R^d\to\R$, with \emph{structure matrix} 
$B\colon \R^d \to \R^{d\times d}$,
and with independent standard real-valued Wiener processes $W_1,\ldots,W_m$ 
defined on a probability space $(\Omega, \mathcal F, \mathbb P)$. 
The noise in the SDE~\eqref{prob} is understood in the Stratonovich sense. 
The initial value $y_0$ is assumed to be non-random for ease of presentation, but
the results of this paper can be extended to the case of random $y_0$ (independent of $W_1,\ldots,W_m$ and satisfying appropriate moment bounds). 

Henceforth we assume at least that $H\in\mathcal{C}^2$, $\widehat{H}_1,\ldots,\widehat{H}_m \in \mathcal{C}^3$,
and that $B\in\mathcal{C}^2$. The gradient is denoted by $\nabla$, e.g.
$\nabla H(y)=\bigl(\frac{\partial H(y)}{\partial y_1},\ldots,\frac{\partial H(y)}{\partial y_d})\in\R^d$.
The structure matrix $B$ is assumed to satisfy the following properties.
\begin{itemize}
\item Skew-symmetry: for every $y\in\R^d$ and for all $i,j\in\{1,\ldots,d\}$, one has
\[
B_{ij}(y)=-B_{ji}(y);
\]
\item Jacobi identity: for every $y\in\R^d$ and for all $i,j,k\in\{1,\ldots,d\}$, one has
\[
\sum_{\ell=1}^d\left( \frac{\partial B_{ij}(y)}{\partial y_\ell}B_{lk}(y)+
\frac{\partial B_{jk}(y)}{\partial y_\ell}B_{li}(y)+\frac{\partial B_{ki}(y)}{\partial y_\ell}B_{lj}(y) \right)=0.
\]
\end{itemize}
Sometimes the structure matrix $B$ is referred to as the Poisson matrix. 
%\\
%\sout{\coco{DC: remove the following sentence or improve it to explain why it is important}}
%\sout{It is important \ceb{for what?} that the same $B(y)$ is used both in the deterministic part and the stochastic part of \eqref{prob}.
%}
In many applications the structure matrix $B$ depends linearly on $y$: 
if there is a family of real numbers $\bigl(b_{ij}^k\bigr)_{1\le i,j,k\le d}$ such that
\begin{align}
\label{Lie-Poisson.B}
B_{ij}(y)=\sum_{k=1}^db_{ji}^ky_k
\end{align}
for all $y\in\R^d$ and $i,j=1,\ldots,d$, then the system~\eqref{prob} is called a stochastic \emph{Lie--Poisson system}. 
Examples are provided below in Section~\ref{sec:examples}.
A stochastic \emph{Hamiltonian} system is obtained if $d$ is even and $B(y)=J^{-1}$ for all $y\in\R^d$, 
where 
$$J=\begin{pmatrix} 0 & Id \\ -Id & 0\end{pmatrix}.$$
If $ \widehat{H}_k = 0 $ for all $k=1,\ldots,m$, then the SDE \eqref{prob} reduces to a classical deterministic (Lie--)Poisson or Hamiltonian system; cf.~\cite{HLW02}.

Properties and numerical approximations of stochastic Hamiltonian systems and of deterministic Poisson systems 
have been extensively studied in the literature, see the references in the introduction. 
The results presented in this work are generalisations to the above stochastic Poisson case, with a special focus on
stochastic Lie--Poisson systems.

Under the previous regularity assumptions, the drift coefficient $y\mapsto B(y)\nabla H(y)$ is of class $\mathcal{C}^1$, and, for all $k=1,\ldots,m$, 
the diffusion coefficient $y\mapsto B(y)\nabla \widehat{H}_k(y)$ is of class $\mathcal{C}^2$. 
As a consequence, the stochastic differential equation~\eqref{prob} 
is locally well-posed: for any deterministic initial condition $y_0\in\R^d$, there exists a random time $\tau$, which is almost surely positive, such that~\eqref{prob} admits a unique solution $t\in[0,\tau)\mapsto y(t)$ with $y(0)=y_0$. Below we will present a criterion to ensure global well-posedness ($\tau=\infty$ almost surely for any initial condition $y_0$). This criterion is applied to study the examples presented below.

%For a positive integer $d$, consider smooth Hamiltonian functions $H=H_0,\widehat H_1,\ldots,\widehat H_m\colon\R^d\to\R$ 
%and a $d\times d$ matrix $B(y)$, for $y\in\R^d$, with the following properties:
%$$
%\text{(skew-symmetry)}\quad B_{ij}(y)=-B_{ji}(y)\quad\text{for all}\quad i,j
%$$
%and for all $i,j,k$
%$$
%\text{(Jacobi identity)}\quad \sum_{l=1}^d\left( \frac{\partial B_{ij}(y)}{\partial y_l}B_{lk}(y)+
%\frac{\partial B_{jk}(y)}{\partial y_l}B_{li}(y)+\frac{\partial B_{ki}(y)}{\partial y_l}B_{lj}(y) \right)=0.
%$$
%For $t\in[0,T]$, we consider the following \emph{stochastic Poisson system}
%
%where the symbol $\circ$ stands for the Stratonovich product. 
%In the present context, we call the matrix $B$ a structure matrix or a Poisson matrix.

\subsection{Properties of stochastic Poisson systems}

Deterministic and stochastic Poisson systems have several geometric properties which we discuss in this section.
Let $\mathcal{H} \colon\R^d\to\R$ be a mapping of class $\mathcal{C}^2$. The evolution of 
$\mathcal{H}(y)$ along a solution $y(t)$ of the stochastic Poisson system~\eqref{prob} is described by 
\begin{align}
\label{dH.Poisson.bracket}
\diff\mathcal{H}(y(t))=\{\mathcal{H},H\}(y(t))\diff t+\sum_{k=1}^m \{\mathcal{H},\widehat H_k\}(y(t))\circ\,\diff W_k(t),
\end{align}
where the Poisson bracket $\{\cdot,\cdot\}$ associated with the structure matrix $B$ is defined by
\begin{align}
\label{Poisson.bracket}
\{F,G\}(y)=\displaystyle\sum_{i,j=1}^d\frac{\partial F(y)}{\partial y_i}B_{ij}(y)\frac{\partial G(y)}{\partial y_j}=\nabla F(y)^T B(y)\nabla G(y).
\end{align}
The identity~\eqref{dH.Poisson.bracket} is proved using  the chain rule for solutions of SDEs written in the Stratonovich formulation. The fact that the same structure matrix $B$ appears in both the deterministic and stochastic parts of the system~\eqref{prob} is important to express~\eqref{dH.Poisson.bracket} using the Poisson bracket defined by~\eqref{Poisson.bracket}, which depends only on $B$ but not on the Hamiltonian functions $H,\widehat H_1,\ldots,\widehat H_m$. This assumption on the system~\eqref{prob} is the key to study the geometric properties of such a system and its numerical discretisation, such as the preservation of Casimir functions or the Poisson map property. Most of the properties stated below would not hold if different structure matrices were considering 
in the stochastic terms. If $\{\mathcal{H},H\}=0$ and if the system is deterministic (i.e. if $ \widehat{H}_k = 0 $ for all $k=1,\ldots,m$), then
the equality~\eqref{dH.Poisson.bracket} implies that $t\mapsto\mathcal{H}(y(t))$ is constant, i.e. that
$\mathcal{H}$ is preserved by the flow of the deterministic Poisson system. Since every smooth Hamiltonian has the property that
$\{H,H\}=0$ this means, in particular, that the flow of a deterministic Poisson systems $\dot{y}=B(y)\nabla H(y)$ preserves the Hamiltonian $H$ (see for instance~\cite[Sect. IV.1 and VII.2]{HLW02}).
In the stochastic case, however, the Hamiltonian is in general not preserved. Precisely, Equation~\eqref{dH.Poisson.bracket} yields the following sufficient condition
\[
\{\mathcal{H},H\}=\{\mathcal{H},\widehat H_1\}=\ldots=\{\mathcal{H},\widehat H_m\}=0
\]
to obtain $\diff \mathcal{H}(y(t))=0$ and hence preservation of $\mathcal{H}$ by the flow of the stochastic Poisson system~\eqref{prob}.
%Hence, a sufficient condition for preservation of the Hamiltonian in the stochastic case is that $\{H,\widehat H_k\}=0$ for all $k=1, \ldots,m$.

In addition, deterministic and stochastic Poisson systems may have conserved quantities called Casimir functions.
\begin{definition}
A function $C\colon\R^d\to\R$ of class $\mathcal{C}^2$ is called a \emph{Casimir} function 
of the stochastic Poisson system~\eqref{prob} if for all $y\in\R^d$ one has
$$
\nabla C(y)^TB(y)=0.
$$
\end{definition}
Observe that the definition of a Casimir function for stochastic and deterministic Poisson systems only depends on the structure matrix $B$, but not on the Hamiltonian functions $H,\widehat{H}_1,\ldots,\widehat{H}_m$.
%Note that the definition of Casimir functions for stochastic and deterministic Poisson systems only depends on the structure matrix $B$, but not on the Hamiltonian functions $H,\widehat{H}_1,\ldots,\widehat{H}_m$. The fact that the same structure matrix $B$ appears in both the deterministic part and the stochastic part of the system~\eqref{prob} is important for defining the Poisson bracket \eqref{Poisson.bracket} and the Casimir functions.
%\footnote{\tobi{I am not sure if it is clear enough. What I want to express is: The Hamiltonians may differ, but the structure matrix must be the same, because otherwise it is not clear how to define the Poisson bracket, the Casimirs, and the Poisson map. I guess that all geometric properties would get lost if the deterministic and stochastic part had two different structure matrices. Any suggestions or a better wording?}
%}
A Casimir function $C$ satisfies
\[
\{C,H\}=\{C,\widehat H_1\}=\ldots=\{C,\widehat H_m\}=0,
\]
see the definition of the Poisson bracket in equation~\eqref{Poisson.bracket}. As a consequence, owing to~\eqref{dH.Poisson.bracket}, any Casimir function $C$ is preserved by the flow of the stochastic Poisson system~\eqref{prob}, i.e. $C(y(t))=C(y_0)$ for all $t\in[0,\tau]$, independently of the choice of the Hamiltonian functions $H,\widehat{H}_1,\ldots,\widehat{H}_m$ (since the same structure matrix $B$ appears in both the deterministic and stochastic parts of~\eqref{prob} 
in order to have preservation of Casimir functions). The preservation of Casimir functions is a desirable feature for a numerical method when applied to the problem~\eqref{prob}.

A criterion to ensure global well-posedness of the dynamics can be stated based on the preservation of Casimir functions by solutions of stochastic Poisson systems: it suffices to assume the existence of a Casimir function with compact level sets.
\begin{proposition}\label{propo:global}
Assume that the stochastic Poisson system~\eqref{prob} admits a Casimir function $C$ such that 
for all $c\in\mathbb R$ the level sets $\{y\in\mathbb{R}^d\,\colon\, C(y)=c\}$ are compact. 
Then for any initial condition $y_0\in\R^d$ the SDE~\eqref{prob} admits a unique global solution $\bigl(y(t)\bigr)_{t\ge 0}$, with $y(0)=y_0$, 
such that almost surely one has, for all $t\ge0$, $C(y(t))=C(y_0)$ and 
%for all $t\ge 0$, for $R(y_0)=\min_{y, C(y)=C(y_0)}\|y\|\in(0,\infty)$, such that for all $t\ge 0$, almost surely one has
\[
\|y(t)\|\le R(y_0)=\max_{y\in\mathbb R^d, C(y)=C(y_0)}\|y\|.
\]
\end{proposition}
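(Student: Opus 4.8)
The plan is to combine the local well-posedness already established with the conservation of the Casimir function $C$ and the standard blow-up alternative for stochastic differential equations with locally Lipschitz coefficients. First I would invoke local well-posedness to obtain, for the given $y_0$, a unique maximal solution $t\in[0,\tau)\mapsto y(t)$, where $\tau$ denotes the explosion time. Introducing the increasing sequence of stopping times $\tau_n=\inf\{t\ge 0:\|y(t)\|\ge n\}$, one has $\tau=\sup_n\tau_n$, and the blow-up alternative states that, almost surely on the event $\{\tau<\infty\}$, one has $\limsup_{t\uparrow\tau}\|y(t)\|=+\infty$; equivalently $\{\tau<\infty\}=\{\sup_{0\le t<\tau}\|y(t)\|=+\infty\}$ up to a null set. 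The goal is then to show that this supremum is in fact almost surely finite, which forces $\tau=\infty$.

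Second, I would record the a priori bound coming from the Casimir function. Since $C$ is a Casimir function, $\nabla C(y)^TB(y)=0$, and hence $\{C,H\}=\{C,\widehat H_1\}=\cdots=\{C,\widehat H_m\}=0$. Applying the Stratonovich chain rule~\eqref{dH.Poisson.bracket} with $\mathcal H=C$ on any interval $[0,T]$ with $T<\tau$ gives $\diff C(y(t))=0$, so that $C(y(t))=C(y_0)$ for all $t\in[0,\tau)$. In particular the whole trajectory stays in the level set $K:=\{y\in\R^d:C(y)=C(y_0)\}$, which is nonempty (it contains $y_0$) and, by assumption, compact. Since $y\mapsto\|y\|$ is continuous, it attains a finite maximum $R(y_0)=\max_{y\in K}\|y\|$ on $K$, and therefore $\|y(t)\|\le R(y_0)$ for every $t\in[0,\tau)$.

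Finally, combining the two steps: the a priori bound yields $\sup_{0\le t<\tau}\|y(t)\|\le R(y_0)<\infty$ almost surely, which is incompatible with $\limsup_{t\uparrow\tau}\|y(t)\|=+\infty$. Hence $\mathbb P(\tau<\infty)=0$, i.e.\ $\tau=\infty$ almost surely, and the maximal solution is in fact global. The claimed identities $C(y(t))=C(y_0)$ and $\|y(t)\|\le R(y_0)$ for all $t\ge 0$ then follow from the two preceding steps read on $[0,\infty)$.

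I expect the only genuinely delicate point to be the rigorous use of the blow-up alternative and the justification that the conservation law $C(y(t))=C(y_0)$ propagates up to the explosion time rather than merely on a fixed compact subinterval. This is handled by localisation: one applies the chain rule to each stopped process $y(t\wedge\tau_n)$, where the coefficients are bounded and Lipschitz, obtains $C(y(t\wedge\tau_n))=C(y_0)$, and then lets $n\to\infty$ using $\tau_n\uparrow\tau$ together with the continuity of $C$ and of the sample paths. The remaining ingredients — nonemptiness and compactness of the level set, and continuity of the norm — are elementary.
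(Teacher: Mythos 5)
Your argument is correct, but it follows a genuinely different route from the paper's. The paper proceeds by truncation of the coefficients: it replaces $H$ and $\widehat H_1,\ldots,\widehat H_m$ by compactly supported modifications $H^R,\widehat H_k^R$ agreeing with the originals on the ball of radius $R(y_0)$, obtains a unique \emph{global} solution $y^R$ of the truncated SDE from the standard globally Lipschitz theory, observes that $y^R$ still preserves $C$ (the structure matrix is unchanged, and Casimir preservation is independent of the Hamiltonians), hence stays inside the ball where the truncation is inactive, and concludes that $y^R$ is in fact a global solution of the original system. You instead take the maximal local solution up to the explosion time $\tau$, propagate the conservation law $C(y(t))=C(y_0)$ up to $\tau$ by localisation with the stopping times $\tau_n$, deduce the a priori bound $\|y(t)\|\le R(y_0)$ on $[0,\tau)$, and rule out explosion via the blow-up alternative. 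Both arguments are sound and rest on the same two pillars (preservation of $C$ and compactness of the level set); the paper's version constructs the global solution directly and avoids invoking the blow-up alternative, while also setting up the truncated coefficients $f^R,\hat f_k^R$ that are reused later in the proof of Theorem~\ref{thm-general}. Your version is closer to the standard non-explosion template in the SDE literature and correctly identifies, and handles, the one delicate point, namely that the conservation law must be justified on the stopped processes before passing to the limit $n\to\infty$.
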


\begin{proof}
The proof of Proposition~\ref{propo:global} follows from a straightforward truncation argument: 
let $R=R(y_0)+1$, and introduce mappings $H^R$ of class $\mathcal{C}^2$ and $\widehat H_1^R,\ldots,\widehat H_m^R:\mathbb{R}^d\to\mathbb{R}$, of class $\mathcal{C}^3$, with compact support included in the ball $\{y\in\mathbb{R}^d\,\colon\, \|y\|\le R\}$, and such that $H^R(y)=H(y)$, $\widehat H_k^R(y)=\widehat H_k(y)$ for all $y$ with $\|y\|\le R(y_0)$ and $k=1,\ldots,m$. Set $f^R(y)=B(y)\nabla H^R(y)$ and $\hat{f}_k^R(y)=B(y)\widehat{H}_k^R(y)$ for all $y\in\mathbb{R}^d$ and $k=1,\ldots,m$. Then $f^R$ is globally Lipschitz continuous and, for all $k=1,\ldots,m$, $\hat{f}_k^R$ is of class $\mathcal{C}^2$, bounded and with bounded derivatives.  By the standard well-posedness result for SDEs with globally Lipschitz continuous nonlinearities (when written in It\^o form), the SDE
\[
\diff y^R(t)=f^R(y^R(t))\,\diff t+\sum_{k=1}^m\hat{f}_k^R(y^R(t))\circ\,\diff W_k(t)
\]
admits a unique global solution $\bigl(y^R(t)\bigr)_{t\ge 0}$ with $y^R(0)=y_0$. Due to the discussion above, this solution preserves the Casimir function $C$:  $C(y^R(t))=C(y^R(0))=C(y_0)$ for all $t\ge 0$. Since the level sets of the Casimir function $C$ are assumed to be compact, by the definition of $R(y_0)$, one has $\|y^R(t)\|\le R(y_0)<R$ for all $t\ge 0$. This yields the equalities $f^R(y^R(t))=B(y^R(t))\nabla H(y^R(t))$ and $\hat{f}_k^R(y^R(t))=B(y^R(t))\widehat{H}_k(y^R(t))$ for all $t\ge 0$ and $k=1,\ldots,m$. Thus $\bigl(y^R(t)\bigr)_{t\ge 0}$ is in fact a global solution of the SDE~\eqref{prob} (without truncation parameter $R$). This concludes the proof of the existence of a global solution. Since the uniqueness is a consequence of the local well-posedness of~\eqref{prob} (by local Lipschitz continuity of the nonlinearities), the sketch of proof of Proposition~\ref{propo:global} is completed.
\end{proof}

To conclude this subsection, we would like to remark that the above analysis of the preservation properties of stochastic Poisson systems is only valid 
when considering stochastic differential equations with multiplicative noise interpreted in the
Stratonovich sense. Other behaviours are observed for It\^o SDEs, see for instance \cite{MR4077238,cv21}, where 
Hamiltonian and Poisson It\^o SDEs and their numerical discretisations are studied. Indeed, if the noise is interpreted in the It\^o sense, the Hamiltonian function $H$ or the Casimir functions $C$ are not preserved. Instead, one observes a linear drift in the expectation of these quantities, which is due to the second-order contribution appearing when using It\^o's formula instead of the chain rule. 
In the sequel, we only study stochastic Poisson systems~\eqref{prob} with noise interpreted in the Stratonovich sense. 
We refer to Subsection~\ref{sec:multi} below, where the relevance of considering the Stratonovich interpretation is justified by a diffusion approximation result.

\subsection{Examples of stochastic Poisson systems}\label{sec:examples}

In this subsection, we first give an example of a stochastic Poisson systems which is not a stochastic Lie--Poisson system. We then provide three examples of stochastic Lie--Poisson systems for which stochastic Poisson integrators based on a splitting strategy are designed and 
studied in Sections~\ref{sec:integrators}~and~\ref{sect-numexp}.

\begin{example}\textbf{Stochastic Lotka--Volterra system.} 
A two-dimensional stochastic Lotka--Volterra system of the form
\begin{equation}\label{slv}
\diff\begin{pmatrix}y_1\\y_2\end{pmatrix}=
\begin{pmatrix}0 & y_1y_2\\-y_1y_2 & 0\end{pmatrix}\left(
\nabla H(y)\,\diff t+\sum_{k=1}^m\nabla\widehat H_k(y)\circ\,\diff W_k\right)
\end{equation}
with the structure matrix $B(y_1,y_2)=\begin{pmatrix}0 & y_1y_2\\-y_1y_2 & 0\end{pmatrix}$ and the Hamiltonian function $H(y_1,y_2)=y_1-\ln(y_1)+y_2-2\ln(y_2)$ gives a stochastic Poisson system (with arbitrary Hamiltonian functions $\widehat{H}_1,\ldots,\widehat{H}_m$). The stochastic prey-predator model studied in~\cite{MR2008602} is obtained taking $m=1$ and $\widehat H_1(y)=-\ln(y_1)+\ln(y_2)$.
 
Observe that the stochastic Lotka-Volterra system~\eqref{slv} does not admit Casimir functions and that it is not a stochastic Lie--Poisson system (since the mapping $y\mapsto B(y)$ is quadratic).

Random perturbations of higher dimensional Lotka--Volterra systems, see \cite{HLW02} for deterministic problems and \cite{MR869549} for It\^o SDEs, 
could also fit in the general framework presented above. 
\end{example}

Let us now present the three examples of stochastic Lie--Poisson systems for which numerical integrators are built and studied in this article.

\begin{example}\textbf{Stochastic Maxwell--Bloch system.}\label{expl-MB} 
Let $d=3$. The deterministic Maxwell--Bloch equations from laser-matter dynamics read 
\begin{equation*}
\left\lbrace
\begin{aligned}
\dot y_1&=y_2\\
\dot y_2&=y_1y_3\\
\dot y_3&=-y_1y_2.
\end{aligned}
\right.
\end{equation*}
This system is relevant to study phenomena of self-induced transparency in lasers, see for instance \cite{MR1169593,MR1702129}. 
This system is a deterministic Lie--Poisson system with 
Poisson matrix, Hamiltonian and Casimir functions given by 
$$
B(y)=\begin{pmatrix} 0 & -y_3 & y_2 \\ y_3 & 0 & 0 \\ -y_2 & 0 & 0 \end{pmatrix}, \quad H(y)=\frac12y_1^2+y_3, 
\quad C(y)=\frac12(y_2^2+y_3^2),
$$
respectively, for all $y=(y_1,y_2,y_3)\in\R^3$. 

In this article, we consider the following stochastic version of the Maxwell-Bloch system:
\begin{equation}\label{smb}
\diff y=B(y)\left(\nabla H(y)\,\diff t+\sigma_1\nabla \widehat H_1(y)\circ\,\diff W_1(t)+\sigma_3\nabla \widehat H_3(y)\circ\,\diff W_3(t) \right),
\end{equation}
where $\widehat H_1(y)=\frac12y_1^2$ and $\widehat H_3(y)=y_3$, $\sigma_1,\sigma_3\ge0$, 
driven by two independent Wiener processes $W_1$ and $W_3$. 
Observe that the Casimir function $C$ does not have compact level sets in this example. The criterion given in Proposition~\ref{propo:global} thus 
cannot be applied to ensure well-posedness of the stochastic differential equation~\eqref{smb}. 
In addition, the theoretical strong and weak convergence results stated below cannot be applied to this example. 
However, it is legitimate to introduce a stochastic Poisson integrator and investigate its behaviour with numerical experiments; 
this will be presented in Section~\ref{sect-numexp} below.
\end{example}

\begin{example}\textbf{Stochastic rigid body system.}\label{expl-SRB}
Let $d=3$. The equations governing the deterministic rigid body motion read
\begin{equation*}
\left\lbrace
\begin{aligned}
\dot y_1&=(I_3^{-1}-I_2^{-1})y_3y_2\\
\dot y_2&=(I_1^{-1}-I_3^{-1})y_1y_3\\
\dot y_3&=(I_2^{-1}-I_1^{-1})y_2y_1,
\end{aligned}
\right.
\end{equation*}
where the unknown $y=(y_1,y_2,y_3)\in\R^3$ represents the angular momentum in the body 
frame and the positive distinct real numbers $I_1,I_2,I_3$ are referred to as the principal moments of inertia, see for instance~\cite[Sect. VII.5]{HLW02}.

The system above is a deterministic Lie--Poisson system, with $d=3$, where the Poisson matrix is given by 
$$
B(y)=\begin{pmatrix}0 & -y_3 & y_2\\ y_3 & 0 & -y_1\\-y_2 & y_1 & 0\end{pmatrix}
$$
and the Hamiltonian function is given by 
$$
H(y)=\frac12\left(\frac{y_1^2}{I_1}+\frac{y_2^2}{I_2}+\frac{y_3^2}{I_3}\right),
$$
for all $y\in\R^3$. The system admits the quadratic Casimir function given by
$$
C(y)=y_1^2+y_2^2+y_3^2,
$$
for all $y\in\R^3$.

In this article, we consider the following stochastic version of the rigid body system
\begin{align}\label{srb}
\diff\begin{pmatrix}y_1\\y_2\\y_3\end{pmatrix}&=
B(y)
\left(\nabla H(y)\,\diff t+\nabla \widehat H_1(y)\circ\,\diff W_1(t)
+\nabla \widehat H_2(y)\circ\,\diff W_2(t)\right.\nonumber\\
&\quad+\left.\nabla \widehat H_3(y)\circ\,\diff W_3(t) \right),
\end{align}
where $\widehat H_k(y)=\frac{y_k^2}{\widehat I_k}$, for $k=1,2,3$, with $\widehat I_1,\widehat I_2,\widehat I_3$  
positive and pairwise distinct real numbers. 
The system~\eqref{srb} is a stochastic Lie--Poisson system. 
Owing to Proposition~\ref{propo:global}, it is globally well-posed, since the Casimir function $C$ has compact level sets.

Observe that the system~\eqref{srb} is a generalisation of the stochastic rigid body motion equations studied 
in~\cite{MR3210739,MR3747641,MR3588726,cdr20}, where $m=1$. It would be straightforward to adapt the results presented below 
concerning the discretisation of~\eqref{srb} to the case $m=1$, this is left to the interested reader.
In the sequel, we only consider the case $m=3$, i.\,e. the system~\eqref{srb}.
\end{example}

\begin{example}\textbf{Stochastic sine--Euler system.}\label{expl-SE} 
The sine--Euler equations consist of a finite-dimensional truncation of the two-dimensional Euler equations in fluid dynamics. 
These equations were first proposed in \cite{MR1115867} and yield the deterministic Lie--Poisson system 
\begin{equation*}
\dot \omega_{\bf m}=\sum_{n_1,n_2=-M,{\bf n}\neq{\bf0}}^M\frac{\sin(\frac{2\pi}{N}{\bf m}\times{\bf n})}{|{\bf n}|^2}\omega_{{\bf m}+{\bf n}}\omega_{-\bf n},
\end{equation*}
where $M$ is an arbitrary positive integer. In the current example, all indices are understood modulo $N=2M+1$, and one sets ${\bf m}\times{\bf n}=m_1n_2-m_2n_1$ for ${\bf n}=(n_1,n_2)$ and ${\bf m}=(m_1,m_2)$.

The unknown is $\omega=(\omega_{\bf n})_{n_1,n_2=-M}^M$, where the complex numbers $\omega_{\bf n}$ satisfy the Hermitian symmetry property $\omega_{-\bf n}=\omega_{\bf n}^\star$ (where $w^\star$ denotes the complex conjugate of a complex number $w$), and $\omega_{(0,0)}=0$. The dimension of the system is 
$d=N^2-1=(2M+1)^2-1$, but under the Hermitian symmetry property there are only $d/2$ independent complex-valued components.

In this article, we consider the case $M=1$, thus $N=3$ and $d=8$. The unknown is written as
\[
\omega=(\omega_{(1,0)},\omega_{(1,1)},\omega_{(0,1)},\omega_{(-1,1)},\omega_{(1,0)}^\star,\omega_{(1,1)}^\star,\omega_{(0,1)}^\star,\omega_{(-1,1)}^\star).
\]
Introduce the fundamental cell 
\[
K=\{(0,1),(0,-1),(1,1),(-1,-1),(1,0),(-1,0),(-1,1),(1,-1)\}.
\]
Then the deterministic sine--Euler system above can be written as a Lie--Poisson system (see for instance \cite{MR1860719,MR1246065})
\begin{equation}\label{detSE}
\dot\omega=B(\omega)\nabla H(\omega),
\end{equation}
where the Poisson matrix is given by
$$
B(\omega)=\frac{\sqrt{3}}{2}
\begin{pmatrix}
0 & \omega_{(-1,1)} & \omega_{(1,1)} & \omega_{(0,1)} & 0 & -\omega_{(0,1)}^* & -\omega_{(-1,1)}^* & -\omega_{(1,1)}^* \\
-\omega_{(-1,1)} & 0 & \omega_{(-1,1)}^* & -\omega_{(0,1)}^* & \omega_{(0,1)} & 0 & -\omega_{(1,0)} & \omega_{(1,0)}^* \\
-\omega_{(1,1)} & -\omega_{(-1,1)}^* & 0 & \omega_{(1,1)}^* & \omega_{(-1,1)} & \omega_{(1,0)}^* & 0 & -\omega_{(1,0)} \\
-\omega_{(0,1)}  & \omega_{(0,1)}^* & -\omega_{(1,1)}^* & 0 & \omega_{(1,1)} & -\omega_{(1,0)} & \omega_{(1,0)}^* & 0 \\
0 & -\omega_{(0,1)} & -\omega_{(-1,1)} & -\omega_{(1,1)} & 0 & \omega_{(-1,1)}^* & \omega_{(1,1)}^* & \omega_{(0,1)}^* \\
\omega_{(0,1)}^* & 0 & -\omega_{(1,0)}^* & \omega_{(1,0)} & -\omega_{(-1,1)}^* & 0 & \omega_{(-1,1)} & -\omega_{(0,1)} \\
\omega_{(-1,1)}^* & \omega_{(1,0)} & 0 & -\omega_{(1,0)}^* & -\omega_{(1,1)}^* & -\omega_{(-1,1)} & 0 & \omega_{(1,1)} \\
\omega_{(1,1)}^* & -\omega_{(1,0)}^* & \omega_{(1,0)} & 0 & -\omega_{(0,1)}^* & \omega_{(0,1)} & -\omega_{(1,1)} & 0
\end{pmatrix}
$$
and with the Hamiltonian function given by
\begin{align*}
H(\omega)=\frac12\sum_{{\bf n}\in K}\frac{\omega_{\bf n}\omega_{-\bf n}}{|{\bf n}|^2}
&=\omega_{(1,0)}\omega_{(1,0)}^*+\frac12\omega_{(1,1)}\omega_{(1,1)}^*+
\omega_{(0,1)}\omega_{(0,1)}^*+\frac12\omega_{(-1,1)}\omega_{(-1,1)}^*\\
&=H_{(1,0)}(\omega)+H_{(1,1)}(\omega)+H_{(0,1)}(\omega)+H_{(-1,1)}(\omega).
\end{align*}
The prefactor $\sqrt{3}/2$ in $B(\omega)$ originates from the equality $\sin(\pm2\pi/3)=\sin(\mp 4\pi/3)=\pm\sqrt{3}/2$.
Recall that the deterministic Poisson system preserves the Hamiltonian $H$. In addition, it admits two Casimir functions, defined by
\[
C_1(\omega)=\frac12\sum_{{\bf n}\in K}\omega_{\bf n}\omega_{\bf n}^\star=\omega_{(1,0)}\omega_{(1,0)}^*+\omega_{(1,1)}\omega_{(1,1)}^*+
\omega_{(0,1)}\omega_{(0,1)}^*+\omega_{(-1,1)}\omega_{(-1,1)}^*
\]
and
\[
C_2(\omega)=\sum_{{\bf n},{\bf m}\in K}
\cos{\left( \frac{2\pi}{3}({\bf n}\times {\bf m})\right)}\omega_{\bf n}\omega_{\bf m}\omega_{-\bf n-\bf m}.
\]
%\ceb{I propose to add the formula in terms of $\omega_{(0,1)}$, etc... It is awful but we have it on a matlab file.} 
%\coco{the formula is horrible and long and perhaps of no use to the possible reader.}

In this article, we consider the following stochastic version of the sine--Euler equations 
\begin{align}\label{stochSE}
\diff \omega&=B(\omega)\left(\nabla H(\omega)\,\diff t+
\sigma_{(1,0)}\nabla \widehat H_{(1,0)}(\omega)\circ\,\diff W_{(1,0)}(t)+\sigma_{(1,1)} \nabla \widehat H_{(1,1)}(\omega)\circ\,\diff W_{(1,1)}(t) \right.\nonumber\\
&\quad\left.+\sigma_{(0,1)}\nabla \widehat H_{(0,1)}(\omega)\circ\,\diff W_{(0,1)}(t) 
+\sigma_{(-1,1)}\nabla \widehat H_{(-1,1)}(\omega)\circ\,\diff W_{(-1,1)}(t)\right),
\end{align}
where, for $\bf k\in\{(1,0),(1,1),(0,1),(-1,1)\}$, the Hamiltonian functions are $\widehat H_{\bf k}(\omega)=H_{\bf k}(\omega)=\frac{\omega_{\bf k}\omega_{\bf k}^\star}{2|\bf k|^2}$, 
$\sigma_{\bf k}\ge 0$ are nonnegative real numbers, and $W_{\bf k}$ are independent standard Wiener process.

The SDE~\eqref{stochSE} is a stochastic Lie--Poisson system. Owing to Proposition~\ref{propo:global}, it is globally well-posed, since the Casimir function $C_1$ has compact level sets.

Let us provide a possible physical interpretation of the stochastic sine--Euler system~\eqref{stochSE}: 
the Hamiltonian function can be decomposed as $H(\omega)=H_{(1,0)}(\omega)+H_{(1,1)}(\omega)+H_{(0,1)}(\omega)+H_{(-1,1)}(\omega)$, 
where $H_{\bf k}$ can be interpreted as the kinetic energy of the modes, or periodic waves, which are parallel to the direction $\bf k\in\{(1,0),(1,1),(0,1),(-1,1)\}$. In the stochastic version introduced above, noise acts independently on each of these modes.
\end{example}

To conclude this subsection on examples of stochastic (Lie--)Poisson systems, let us mention other systems which may be treated using the techniques developed in this work: appropriate spatial discretisations of (stochastic) Vlasov--Poisson equations~\cite{tyranowski2021stochastic}, 
random perturbations of the full rigid body~\cite{MR2970274} (with a rotation matrix giving the orientation of the body in a fixed frame), stochastic models of fluid dynamics~\cite{MR3800250}, or reduced models based on a Gaussian wavepacket of 
the time-dependent $N$-body Schr\"odinger equation~\cite[Chap VII.6.1-VII.6.4]{HLW02}. 
These examples are not considered in this article and may be studied in future works.

\subsection{The Poisson map property}\label{sec:Pmap}

We proceed with showing that the flow of stochastic Poisson systems~\eqref{prob} satisfies a property 
which is a generalisation of the symplecticity property for deterministic, respectively stochastic, Hamiltonian systems, see e.g \cite{HLW02}, respectively~\cite{MR2069903}.

Let $ D_y$ denote the Jacobian operator, i.e.
$ D_y f(y) = (\partial_j f_i(y))_{1\le i,j\le d} $ for a smooth function $f\colon\R^d\to \R^d$. 
The transpose of a matrix $M$ is denoted by $M^T$.

\begin{definition}\label{def:Pmap}
Let $U\subset\R^d$ be an open set. A transformation $\varphi\colon U\to\R^d$ is called a \emph{Poisson map} for the problem \eqref{prob}, if one has, almost surely, for all $y\in\R^d$,
$$
D_y\varphi(y)B(y)D_y\varphi(y)^T=B(\varphi(y)).
$$
\end{definition}
Observe that a composition of Poisson maps is a Poisson  map. This property will be used in the design of stochastic Poisson splitting integrators in the next sections. 

The main result of this section is stated below.
\begin{theorem}\label{th:flowP}
Introduce the flow $(t,y)\mapsto \varphi_t(y)$ of the stochastic Poisson system~\eqref{prob} 
with coefficients of class $\mathcal C^3$. Assume that the flow 
is globally well defined and of class $\mathcal C^1$ with respect to the variable $y$. 
Then, for all $t\ge 0$, $\varphi_t$ is a Poisson map: almost surely, for all $y\in\R^d$, one has
$$
D_y\varphi_t(y)B(y)D_y\varphi_t(y)^T=B(\varphi_t(y)).
$$
\end{theorem}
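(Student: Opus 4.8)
The plan is to extend the deterministic argument of \cite[Chapter~VII]{HLW02} to the Stratonovich setting, relying on the fact that the Stratonovich differential obeys the usual chain and Leibniz rules. Throughout I write $f_0=B\nabla H$ and $f_k=B\nabla\widehat H_k$ for $k=1,\ldots,m$, so that the flow satisfies
\[
\diff\varphi_t(y)=f_0(\varphi_t(y))\diff t+\sum_{k=1}^m f_k(\varphi_t(y))\circ\diff W_k(t),\qquad \varphi_0(y)=y.
\]
The conceptual heart of the proof is the purely algebraic identity that, for any $\mathcal C^2$ function $G$ and the associated Hamiltonian vector field $f=B\nabla G$, one has
\[
\sum_{\ell=1}^d\frac{\partial B}{\partial y_\ell}(y)\,f_\ell(y)=D_yf(y)\,B(y)+B(y)\,D_yf(y)^T
\]
for all $y\in\R^d$. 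I would prove this componentwise: expanding $f_\ell=\sum_p B_{\ell p}\partial_p G$ and the Jacobians, the terms containing the Hessian of $G$ cancel by skew-symmetry of $B$, while the surviving first-order terms reduce, for each fixed $p$, exactly to the Jacobi identity satisfied by $B$. This is the step where the Poisson structure (skew-symmetry together with the Jacobi identity) is essential, and I expect it to be the crux, i.e.\ the main obstacle, of the argument.

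Next I would introduce the Jacobian $\Psi_t=D_y\varphi_t(y)$, which by the $\mathcal C^1$ assumption on the flow solves the variational equation obtained by differentiating the flow equation in $y$; since the Stratonovich differential obeys the ordinary chain rule, this reads
\[
\diff\Psi_t=D_yf_0(\varphi_t(y))\,\Psi_t\,\diff t+\sum_{k=1}^m D_yf_k(\varphi_t(y))\,\Psi_t\circ\diff W_k(t),\qquad \Psi_0=\mathrm{Id}.
\]
Applying the Stratonovich Leibniz rule to $P_t:=\Psi_t B(y)\Psi_t^T$ (no It\^o correction terms appear) and abbreviating $A_k(t)=D_yf_k(\varphi_t(y))$, I obtain
\[
\diff P_t=\bigl(A_0(t)P_t+P_tA_0(t)^T\bigr)\diff t+\sum_{k=1}^m\bigl(A_k(t)P_t+P_tA_k(t)^T\bigr)\circ\diff W_k(t).
\]
On the other hand, applying the Stratonovich chain rule to $t\mapsto B(\varphi_t(y))$ and then inserting the algebraic identity above, applied to each of the vector fields $f_0,f_1,\ldots,f_m$ (that is, with $G=H$ and $G=\widehat H_k$), shows that $B(\varphi_t(y))$ solves exactly the same linear matrix-valued Stratonovich SDE.

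Both $P_t$ and $B(\varphi_t(y))$ share the initial value $B(y)$, since $\Psi_0=\mathrm{Id}$ and $\varphi_0(y)=y$. I would then conclude by uniqueness for the linear matrix-valued Stratonovich SDE $\diff M_t=(A_0 M_t+M_tA_0^T)\diff t+\sum_k(A_k M_t+M_tA_k^T)\circ\diff W_k$, whose coefficients are linear in $M_t$ with continuous adapted coefficients $A_k(t)$; here the $\mathcal C^3$ regularity of the data is used, both to guarantee that the variational equation is well posed and that the Stratonovich--It\^o conversion needed to invoke standard uniqueness results is licit. This gives $P_t=B(\varphi_t(y))$ almost surely for each fixed $y$, which is precisely the identity of Definition~\ref{def:Pmap}. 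Finally, to upgrade ``almost surely, for each $y$'' to ``almost surely, for all $y$'' as stated in Theorem~\ref{th:flowP}, I would establish the identity on a countable dense subset of $\R^d$ and extend it to all $y$ by the continuity in $y$ of both $\varphi_t$ and $D_y\varphi_t$.
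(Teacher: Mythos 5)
Your proposal is correct and follows essentially the same route as the paper: the central algebraic identity $\sum_{\ell}\partial_\ell B\,f_\ell = D_yf\,B + B\,D_yf^T$ for $f=B\nabla G$ (Hessian terms cancelling by skew-symmetry, first-order terms handled by the Jacobi identity) is exactly the paper's computation of $\mathcal{S}_kB+B\mathcal{S}_k^T$ with $\mathcal{S}_k=\mathcal{M}_k+\mathcal{L}_k$, and your conclusion via uniqueness for two solutions of the same linear Stratonovich SDE is equivalent to the paper's argument that $\delta(t)=\Phi_tB(y)\Phi_t^T-B(\varphi_t)$ solves a homogeneous linear SDE with $\delta(0)=0$. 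Your closing remark on upgrading the null set to be uniform in $y$ by density and continuity is a small refinement the paper leaves implicit.
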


This result can be compared with the similar statement in \cite[Th. 2.1]{hong2020structurepreserving}. 
To prove their result, the authors of \cite{hong2020structurepreserving} use 
the Darboux--Lie theorem to perform a change of coordinates, and replace the considered stochastic Poisson system by a stochastic 
canonical Hamiltonian system. 
%In this work, to prove Theorem~\ref{th:flowP}, we choose a more direct way inspired by the analysis of the deterministic case, as proposed in \cite[Ex. 6.VII.4]{HLW02}. 
Our strategy to prove Theorem~\ref{th:flowP} is more direct and mimics the analysis of the deterministic case, as proposed in \cite[Ex. 6.VII.4]{HLW02}.

\begin{proof}
For all $t\ge 0$ and all $y\in\R^d$, let $\Phi_t(y)= D_y \varphi_t(y)$. To simplify notation, 
$\varphi_t$ stands for $\varphi_t(y)$ in the computations below. For convenience, we set $\widehat{H}_0=H$ in this proof.

First, it is well-known that $t\mapsto \Phi_t(y)$ is the solution of the variational equation
\[
\diff \Phi_t=
D_y
\bigl( B(\varphi_t)\nabla H(\varphi_t)\bigr)\Phi_t\diff t+\sum_{k=1}^m D_y\bigl(B(\varphi_t)\nabla\widehat H_k(\varphi_t)\bigr)\Phi_t\circ \diff W_k(t)
\]
with $\Phi_0=Id$,
see for instance~\cite[Theorem 2.3.32]{MR1723992}.
We claim that, in the case of the stochastic Poisson system~\eqref{prob}, the variational equation above may be rewritten as
\begin{equation}\label{SDE.variational}
\diff \Phi_t= \mathcal{S}_0(\varphi_t)\Phi_t\diff t +
\sum_{k=1}^m \mathcal{S}_k(\varphi_t) \Phi_t\circ \diff W_k(t),
\end{equation}
where, for all $k=0,\ldots,m$, one has 
\begin{align*}
 \mathcal{S}_k(\varphi_t) &= \mathcal{M}_k(\varphi_t) + \mathcal{L}_k(\varphi_t)\\
 \mathcal{M}_k(\varphi_t)&=
 \left[ \big(\partial_1 B(\varphi_t)\big) \nabla\widehat H_k(\varphi_t) \mid \ldots \mid 
 \big(\partial_d B(\varphi_t)\big) \nabla\widehat  H_k(\varphi_t)\right]
 \\
\mathcal{L}_k(\varphi_t)&=B(\varphi_t)\nabla^2\widehat H_k(\varphi_t).
\end{align*}
The identification of the matrices $\mathcal{S}_k(\varphi_t)$ above is based on the following computation: if $[M]_{i,j}$ denotes the $(i,j)$-th entry of a matrix $M$, then applying the chain rule (recall that the SDE is interpreted in the Stratonovich sense) yields
\begin{align*}
& \left[D_y\big( B(\varphi_t)\nabla \widehat H_k(\varphi_t) \big) \right]_{i,j}
=\sum_{l=1}^d\frac{\partial}{\partial y_j}
\big(B_{il}(\varphi_t)\partial_l \widehat H_k(\varphi_t)\big) 
\\
&= 
\sum_{l=1}^d 
\left(\sum_{n=1}^d \big(\partial_n B_{il}(\varphi_t)\big)
\left[\frac{\partial \varphi_t}{\partial y_j}\right]_n \right) 
\partial_l \widehat H_k(\varphi_t)
+
\sum_{l=1}^dB_{il}(\varphi_t)
\sum_{n=1}^d 
\big(\partial_l \partial_n \widehat H_k(\varphi_t) \big)
\left[\frac{\partial \varphi_t}{\partial y_j}\right]_n
\\
&=
\sum_{n=1}^d 
\left(
\sum_{l=1}^d \big(\partial_n B_{il}(\varphi_t)\big) \partial_l \widehat H_k(\varphi_t)
\right)
\left[\frac{\partial \varphi_t}{\partial y_j}\right]_n 
+
\sum_{n=1}^d 
\left(
\sum_{l=1}^dB_{il}(\varphi_t)
\partial_l \partial_n \widehat H_k(\varphi_t) 
\right)
\left[\frac{\partial \varphi_t}{\partial y_j}\right]_n
\\
&=
\sum_{n=1}^d 
\Big[\big(\partial_n B(\varphi_t)\big) \nabla \widehat H_k(\varphi_t)\Big]_i
[\Phi_t]_{n,j} 
+
\sum_{n=1}^d 
\Big[B(\varphi_t)\nabla^2\widehat H_k(\varphi_t) \Big]_{i,n}
[\Phi_t]_{n,j}.
\end{align*}

Let us now define
\begin{align}
\label{Def.delta}
 \delta(t):=\Phi_t B(y)\Phi_t^T-B(\varphi_t).
\end{align}
Since $\varphi_0(y)=y$ for all $y\in\R^d$ and $\Phi_0=Id$, one has $\delta(0)=0$. To prove that $\varphi_t$ is a Poisson map for all $t\ge 0$ almost surely, it suffices to check that $\delta(t)=0$ for all $t\ge 0$. 

We will show that $t\mapsto\delta(t)$ is a solution of the linear equation
\begin{align}
\label{SDE.delta}
\diff \delta(t)&=
\bigg(\mathcal{S}_0(\varphi_t) \diff t + \sum_{k=1}^m \mathcal{S}_k(\varphi_t) \circ \diff W_k(t)\bigg)
\delta(t)
\\
\notag
&\qquad
+
\delta(t)
\bigg(\mathcal{S}_0(\varphi_t) \diff t + \sum_{k=1}^m \mathcal{S}_k(\varphi_t) \circ \diff W_k(t)\bigg)^T.
\end{align}
Since the initial value is $\delta(0)=0$, by uniqueness of the solution we obtain $\delta(t)=0$ for all $t\ge 0$, thus $\varphi_t$ is a Poisson map for all $t\ge 0$.

It remains to prove that $\delta$ is indeed a solution of~\eqref{SDE.delta}. 
On the one hand, 
from the variational equation~\eqref{SDE.variational} for $\Phi_t$ above, applying the product rule yields
\begin{align}\label{SDE.delta_bis}
\diff \delta(t)&=
\bigg(\mathcal{S}_0(\varphi_t) \diff t
+ \sum_{k=1}^m \mathcal{S}_k(\varphi_t) \circ \diff W_k(t)\bigg)\Phi_t B(y)\Phi_t^T
\\
\notag
&\qquad
+\Phi_tB(y)\Phi_t^T \bigg(\mathcal{S}_0(\varphi_t) \diff t
+\sum_{k=1}^m \mathcal{S}_k(\varphi_t) \circ \diff W_k(t)\bigg)^T
-\diff B(\varphi_t).
\end{align}
On the other hand, one has the identity
\begin{align}
\label{SDE.B}
\diff B(\varphi_t)
&=
\bigg(\mathcal{S}_0(\varphi_t) \diff t + \sum_{k=1}^m \mathcal{S}_k(\varphi_t) \circ \diff W_k(t)\bigg)
B(\varphi_t)
\\
\notag
&\qquad
+
B(\varphi_t)
\bigg(\mathcal{S}_0(\varphi_t) \diff t + \sum_{k=1}^m \mathcal{S}_k(\varphi_t) \circ \diff W_k(t)\bigg)^T.
\end{align}
Combining~\eqref{SDE.delta_bis} and~\eqref{SDE.B} provides the claim that $t\mapsto \delta(t)$ is solution of~\eqref{SDE.delta}. The proof of the identity~\eqref{SDE.B} requires to exploit the assumptions on the structure matrix $B$ as follows. First, note that the $(i,j)$-th entry of the left-hand side of \eqref{SDE.B} satisfies
\begin{align*}
\big[\diff B(\varphi_t)\big]_{i,j} 
&=
\nabla B_{ij}^T(\varphi_t){\circ\,}\diff \varphi_t
\\
&=
\nabla B_{ij}^T(\varphi_t)
\left(
B(\varphi_t)\nabla H(\varphi_t)\,\diff t+\sum_{k=1}^mB(\varphi_t)\nabla\widehat H_k(\varphi_t)\circ\,\diff W_k(t)
\right),
\end{align*}
using the chain rule. Recall that $\mathcal{S}_k(\varphi_t)=\mathcal{M}_k(\varphi_t)+\mathcal{L}_k(\varphi_t)$.

On the one hand, all the terms involving 
$\mathcal{L}_k(\varphi_t)=B(\varphi_t)\nabla^2\widehat H_k(\varphi_t)$ appearing on the right-hand side of~\eqref{SDE.B} vanish: this follows from 
the symmetry of the Hessians $\nabla^2\widehat H_k$ and the skew-symmetry of $B$ which yields (omitting the argument ``$(\varphi_t)$'' everywhere)
\begin{align*}
\mathcal{L}_kB+B\mathcal{L}_k^T
&=B\nabla^2\widehat H_kB+
B\nabla^2\widehat H_kB^T
=0 
\end{align*}
for $k=0,1,\ldots,m$. 

On the other hand, using the skew-symmetry of $B$ and the Jacobi identity
(and omitting ``$(\varphi_t)$'' again), one has
\begin{align*}
\Big[\mathcal{S}_k B 
 + B \mathcal{S}_k^T
 \Big]_{ij}
&=
\Big[\mathcal{M}_k B 
 + B \mathcal{M}_k^T
 \Big]_{ij}
 \\
 &=
 \sum_{l=1}^d  \big[\mathcal{M}_k\big]_{il} B_{lj} +
 \sum_{l=1}^d  B_{il} \big[\mathcal{M}_k\big]_{jl}
 \\
 &=
 \sum_{l=1}^d  
 \sum_{n=1}^d \big(\partial_l B_{in}\big) \partial_n\widehat  H_k
 B_{lj} 
 +
 \sum_{l=1}^d  B_{il} 
 \sum_{n=1}^d \big(\partial_l B_{jn}\big) \partial_n\widehat  H_k
 \\
 &=
 \sum_{n=1}^d \sum_{l=1}^d \Big(
 \big(\partial_l B_{in}\big) B_{lj} 
 + \big(\partial_l B_{jn}\big) B_{il} 
 \Big) \partial_n\widehat  H_k
 \\
 &=
 -
 \sum_{n=1}^d \sum_{l=1}^d \Big(
 \big(\partial_l B_{ni}\big) B_{lj} 
 + \big(\partial_l B_{jn}\big) B_{li} 
 \Big) \partial_n\widehat  H_k
  \\
 &=
 \sum_{n=1}^d \sum_{l=1}^d \Big(
 \big(\partial_l B_{ij}\big) B_{ln} 
 \Big) \partial_n\widehat  H_k
  \\
 &=
\nabla B_{ij}^T B \nabla\widehat  H_k
\end{align*}
for all $k=0, \ldots, m$.

This concludes the proof of~\eqref{SDE.B}, which as already explained above gives the identity $\delta(t)=0$ for all $t\ge 0$. In the end, this concludes the proof that $\varphi_t$ is a Poisson map for all $t\ge 0$, almost surely.

%This proves \eqref{SDE.B} and thus also \eqref{SDE.delta}.
%But since \eqref{SDE.delta} is a linear SDE with initial data
%$\delta(0) = 0$, it follows that $\delta(t)=0$ for all $t$, see details below. 
%Hence, the flow $\varphi_t(y)$ of the stochastic Poisson system \eqref{prob} 
%is a Poisson map. 
%
%Before closing the proof, let us provide some details for this last step. 
%The matrix-valued SDE \eqref{SDE.delta} for $\delta(t)\in \R^{d \times d}$ can be 
%reformulated as a vector-valued SDE with a vector $z(t)\in \R^{d^2}$
%which contains the entries of $\delta(t)$ in an arbitrary ordering. Indeed, 
%for every $t$ and every $k$ the mapping
%\begin{align}
%\label{R2D2}
% \delta \mapsto \mathcal{S}_k(\varphi_t)\delta + \delta \mathcal{S}_k^T(\varphi_t)
%\end{align}
%is linear. Hence, there is a matrix $\mathcal{A}_k(\varphi_t)$ such that
%\eqref{R2D2} corresponds to 
%\begin{align*}
%z \mapsto \mathcal{A}_k(\varphi_t) z
%\end{align*}
%in vector notation. This means that there is a vectorial representation of the SDE 
%\eqref{SDE.delta} in the form
%\begin{align*}
%\diff z(t)&= 
%\mathcal{A}_0(\varphi_t) z(t) \diff t + 
%\sum_{k=1}^m \mathcal{A}_k(\varphi_t) z(t) \circ \diff W_k(t)
%\\
%z(0) &= 0.
%\end{align*}
%The solution to this linear SDE is given by 
%\begin{align*}
% z(t) = M(t) z_0, 
%\end{align*}
%with the fundamental matrix $M$, see for instance \cite[Theorem 2.1]{MR2380366}. 
\end{proof}

One of the objectives of this work is to design and study integrators for the stochastic Poisson system~\eqref{prob}, which preserve its geometric structure, namely the Poisson map property (Definition~\ref{def:Pmap} and Theorem~\ref{th:flowP}), and the preservation of the Casimir functions. This leads to define and analyze so-called stochastic Poisson integrators, see Section~\ref{sec:integrators}.

\subsection{Stochastic Poisson systems obtained by diffusion approximation}\label{sec:multi}

The goal of this subsection is to describe a class of multiscale stochastic systems, depending on a parameter $\epsilon\in(0,1)$, such that the stochastic Poisson system~\eqref{prob} is obtained as a limit when $\epsilon\to 0$. 
In particular, this approximation result justifies why considering stochastic Poisson systems 
with a Stratonovich interpretation of the noise is relevant.

For all $\epsilon\in(0,1)$, introduce the multiscale system
\begin{equation}\label{prob_eps}
\left\lbrace
\begin{aligned}
\diff y^\epsilon(t)&=B(y^\epsilon(t))\nabla H(y^\epsilon(t))\,\diff t+\sum_{k=1}^mB(y^\epsilon(t))\nabla \widehat H_k(y^\epsilon(t))\frac{\xi_k^\epsilon(t)}{\epsilon}\diff t,\\
\diff \xi_k^\epsilon(t)&=-\frac{\xi_k^\epsilon(t)}{\epsilon^2}\diff t+\frac{1}{\epsilon}\diff W_k(t),\quad k=1,\ldots,m,
\end{aligned}
\right.
\end{equation}
with initial values $y^\epsilon(0)=0$ and $\xi_k^\epsilon(0)=0$, for all $k=1,\ldots,m$. Note that $\xi_k^\epsilon$ is an Ornstein--Uhlenbeck process, for all $k=1,\ldots,m$. Compared with the stochastic Poisson system~\eqref{prob}, $y^\epsilon$ solves a random ordinary differential equation, since the noise $\circ \diff W_k(t)$ is replaced by $\frac{\xi_k^\epsilon(t)}{\epsilon}\diff t$. Observe that if $C$ is a Casimir function of the stochastic Poisson system~\eqref{prob}, then one has $dC(y^\epsilon(t))=0$ owing to the chain rule, thus $C(y^\epsilon(t))=C(y^\epsilon(0))$ for all $t\ge 0$. Assuming that the Casimir function $C$ has compact level sets, like in Proposition~\ref{propo:global}, ensures that the system~\eqref{prob_eps} is globally well-posed, for all $\epsilon\in(0,1)$, and that
\[
\underset{\epsilon\in(0,1)}\sup~\underset{t\ge 0}\sup~\|y^\epsilon(t)\|\le R(y_0).
\]
When $\epsilon\to 0$, one has the following result.
\begin{proposition}\label{propo:multi}
Assume that the stochastic Poisson system~\eqref{prob} admits a Casimir function $C$  such that 
for all $c\in\mathbb R$, the level sets $\{y\in\mathbb{R}^d\,\colon\, C(y)=c\}$ are compact.

For all $t\ge 0$,
\[
y^{\epsilon}(t)\underset{\epsilon\to 0}\to y(t),
\]
where the convergence holds in distribution ($y^\epsilon(t)$ and $y(t)$ are random variables). In addition, for all $y_0\in\R^d$, all $T\in(0,\infty)$ and any function $\phi$ of class $\mathcal{C}^3$, there exists a real number $\CC(T,y_0,\phi)\in(0,\infty)$ such that for all $\epsilon\in(0,1)$
\[
\underset{0\le t\le T}\sup~\big|\mathbb E[\phi(y^\epsilon(t))]-\mathbb E[\phi(y(t))]\big|\le \CC(T,y_0,\phi)\epsilon.
\]
\end{proposition}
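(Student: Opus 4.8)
The plan is to establish the quantitative weak error bound first, and then deduce the convergence in distribution from it. I would use the perturbed test function (asymptotic expansion) method for the backward Kolmogorov equation of the joint Markov process $(y^\epsilon,\xi^\epsilon)$, with $\xi^\epsilon=(\xi_1^\epsilon,\ldots,\xi_m^\epsilon)$. A preliminary reduction exploits the a priori estimate $\sup_{\epsilon\in(0,1)}\sup_{t\ge0}\|y^\epsilon(t)\|\le R(y_0)$ recalled above: since the slow variable never leaves a fixed compact set, I may truncate the coefficients $B\nabla H$ and $B\nabla\widehat H_k$ outside the ball $\{\|y\|\le R(y_0)+1\}$ so that they become smooth, bounded, with bounded derivatives, without altering either $y^\epsilon$ or $y$, exactly as in the proof of Proposition~\ref{propo:global}. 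From now on all coefficients are globally smooth and bounded.

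Writing $\mathcal{A}_0=(B\nabla H)\cdot\nabla_y$ and $\mathcal{A}_k=(B\nabla\widehat H_k)\cdot\nabla_y$ for $k=1,\ldots,m$, the generator of $(y^\epsilon,\xi^\epsilon)$ has the three-scale form $\mathcal{L}^\epsilon=\epsilon^{-2}\mathcal{L}_0+\epsilon^{-1}\mathcal{L}_1+\mathcal{L}_2$, where $\mathcal{L}_0=\sum_k(-\xi_k\partial_{\xi_k}+\tfrac12\partial_{\xi_k}^2)$ is the Ornstein--Uhlenbeck generator in the fast variable, $\mathcal{L}_1=\sum_k\xi_k\mathcal{A}_k$, and $\mathcal{L}_2=\mathcal{A}_0$. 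The operator $\mathcal{L}_0$ has the centered Gaussian invariant measure $\mu=\mathcal{N}(0,\tfrac12)^{\otimes m}$, so that $\int\xi_k\diff\mu=0$ and $\int\xi_k\xi_l\diff\mu=\tfrac12\delta_{kl}$. Fixing $T$ and letting $u^\epsilon(t,y,\xi)=\mathbb{E}[\phi(y^\epsilon(T))\mid y^\epsilon(t)=y,\xi^\epsilon(t)=\xi]$ solve $\partial_tu^\epsilon+\mathcal{L}^\epsilon u^\epsilon=0$ with $u^\epsilon(T,\cdot)=\phi$, I would seek an expansion $u^\epsilon\approx u_0+\epsilon u_1+\epsilon^2u_2$. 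The orders $\epsilon^{-2}$ and $\epsilon^{-1}$ force $u_0=u_0(t,y)$ and $u_1=\sum_k\xi_k\mathcal{A}_ku_0$ (using $\mathcal{L}_0\xi_k=-\xi_k$). The Fredholm solvability condition at order $\epsilon^0$, namely $\int(\partial_tu_0+\mathcal{L}_1u_1+\mathcal{L}_2u_0)\diff\mu=0$, then reduces to $\partial_tu_0+\mathcal{A}_0u_0+\tfrac12\sum_k\mathcal{A}_k^2u_0=0$; here the factor $\tfrac12$ comes precisely from the stationary variance $\tfrac12$ of $\xi_k^\epsilon$, and $\mathcal{A}_0+\tfrac12\sum_k\mathcal{A}_k^2$ is exactly the (It\^o) generator of the Stratonovich system~\eqref{prob}. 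Hence $u_0(t,y)=\mathbb{E}[\phi(y(T))\mid y(t)=y]$, which identifies the limit and explains why the Stratonovich interpretation is the relevant one.

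To make this rigorous I would introduce the remainder $r^\epsilon=u^\epsilon-(u_0+\epsilon u_1+\epsilon^2u_2)$, where $u_2$ is the mean-zero solution of $\mathcal{L}_0u_2=-(\partial_tu_0+\mathcal{L}_1u_1+\mathcal{L}_2u_0)$, a polynomial of degree two in $\xi$ whose $y$-coefficients involve derivatives of $u_0$ up to order two. By construction the contributions of orders $\epsilon^{-2},\epsilon^{-1},\epsilon^0$ cancel, leaving $(\partial_t+\mathcal{L}^\epsilon)r^\epsilon=-\epsilon R^\epsilon$ with $R^\epsilon=\partial_tu_1+\mathcal{L}_1u_2+\mathcal{L}_2u_1+\epsilon(\partial_tu_2+\mathcal{L}_2u_2)$ and terminal data $r^\epsilon(T,\cdot)=-\epsilon u_1(T,\cdot)-\epsilon^2u_2(T,\cdot)$; both are polynomial in $\xi$ with coefficients bounded in $y$ (through derivatives of $u_0$ up to order three). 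The It\^o formula then gives the representation $r^\epsilon(0,y_0,0)=\mathbb{E}[r^\epsilon(T,y^\epsilon(T),\xi^\epsilon(T))]+\epsilon\,\mathbb{E}[\int_0^TR^\epsilon(s,y^\epsilon(s),\xi^\epsilon(s))\diff s]$, and the key point is that $\xi^\epsilon(t)$ is Gaussian with variance $\tfrac12(1-e^{-2t/\epsilon^2})\le\tfrac12$, so all its moments are bounded uniformly in $t$ and $\epsilon$; combined with $\|y^\epsilon(t)\|\le R(y_0)$ this yields $|r^\epsilon(0,y_0,0)|\le\CC\epsilon$. Since $\xi^\epsilon(0)=0$ gives $u_1(0,y_0,0)=0$, we obtain $|\mathbb{E}[\phi(y^\epsilon(T))]-\mathbb{E}[\phi(y(T))]|=|u^\epsilon(0,y_0,0)-u_0(0,y_0)|\le\CC\epsilon$, and repeating the argument with $T$ replaced by any $t\in[0,T]$ gives the stated supremum bound. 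Finally, convergence in distribution follows: the laws of $y^\epsilon(t)$ and $y(t)$ are supported in the fixed compact set $\{\|y\|\le R(y_0)\}$, on which smooth functions are uniformly dense in the continuous ones, so the convergence $\mathbb{E}[\phi(y^\epsilon(t))]\to\mathbb{E}[\phi(y(t))]$ for every $\phi\in\mathcal{C}^3$ extends to all bounded continuous $\phi$.

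The main obstacle is the regularity theory needed to justify the expansion: I must show that the limiting Kolmogorov solution $u_0$ is of class $\mathcal{C}^3$ in $y$ with bounds on $[0,T]$ that are uniform in time, since these derivatives appear in $u_1$, $u_2$ and $R^\epsilon$. This is where the smoothness of $\phi$ and of the (truncated) coefficients, together with the localization provided by the compact Casimir level sets, are essential. The remaining difficulty is purely technical, namely checking that the stochastic integrals in the Feynman--Kac step are genuine martingales despite the polynomial growth of their integrands in $\xi$; this is handled by the uniform moment bounds on the Ornstein--Uhlenbeck processes $\xi_k^\epsilon$.
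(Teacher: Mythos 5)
The paper does not actually prove Proposition~\ref{propo:multi}: it declares the result standard, cites the homogenization literature and \cite[Propositions~2.4 and~2.6]{BRR}, and merely indicates that the weak error estimate follows by ``decomposing the error in terms of solutions of Kolmogorov and Poisson equations''. Your proposal is precisely that argument carried out in full: the three-scale generator $\epsilon^{-2}\mathcal{L}_0+\epsilon^{-1}\mathcal{L}_1+\mathcal{L}_2$, the correctors $u_1,u_2$ obtained from Poisson equations for the Ornstein--Uhlenbeck generator, the Fredholm condition identifying the limit generator $\mathcal{A}_0+\tfrac12\sum_k\mathcal{A}_k^2$ (hence the Stratonovich interpretation), and the It\^o/Feynman--Kac representation of the remainder combined with the uniform bounds $\|y^\epsilon(t)\|\le R(y_0)$ and $\operatorname{Var}(\xi_k^\epsilon(t))\le\tfrac12$. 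The truncation via the compact Casimir level sets and the deduction of convergence in distribution from the weak error bound on a fixed compact set are both sound. So this is the intended route, executed correctly in its structure.

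One point deserves a correction. You assert that $R^\epsilon$ involves derivatives of $u_0$ up to order three only, but $R^\epsilon$ contains $\epsilon\,\partial_t u_2$, and since $u_2=\tfrac12\sum_{k,\ell}(\xi_k\xi_\ell-\tfrac12\delta_{k\ell})\mathcal{A}_k\mathcal{A}_\ell u_0$ and $\partial_tu_0=-(\mathcal{A}_0+\tfrac12\sum_k\mathcal{A}_k^2)u_0$, this term involves fourth-order $y$-derivatives of $u_0$. With $\phi$ only of class $\mathcal{C}^3$ (and the standing regularity $H\in\mathcal{C}^2$, $\widehat H_k\in\mathcal{C}^3$, $B\in\mathcal{C}^2$), the required bound on $\partial_tu_2$ is not automatic; one must either mollify $\phi$ with quantitative control of the $\mathcal{C}^3$ norms, exploit that this term carries an extra factor of $\epsilon$, or assume slightly more smoothness. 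This is a bookkeeping issue rather than a flaw in the method, and the paper's own (omitted) proof is subject to the same caveat, but your stated derivative count should be amended.
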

Proposition~\ref{propo:multi} is a diffusion approximation result: roughly, a diffusion process is the solution of a SDE driven by a Wiener process. 
Here, the diffusion process $y$ is approximated by the solutions $y^\epsilon$ of ODEs.

The proof of Proposition~\ref{propo:multi} is omitted. Indeed, this is a standard result in the literature: we refer for instance to the monograph~\cite[Chapter~11 and~18]{MR2382139} for a presentation of homogenization techniques, and references therein for a historical perspective. Proposition~\ref{propo:multi} fits in the class of Wong--Zakai approximation results (where a smooth approximation of a Wiener noise leads to a SDE driven by Stratonovich noise). We also refer to~\cite[Proposition~2.6]{BRR} for a similar statement (with $m=1$), and references therein for ideas of proof. Note that the weak error estimate can be obtained using a variant of the proof of~\cite[Proposition~2.4]{BRR}, decomposing the error in terms of solutions of Kolmogorov and Poisson equations. The details of the proof are left to the interested readers.

Let us provide an heuristic argument which justifies the diffusion approximation result: for all $k=1,\ldots,m$, one has the identity
\[
\frac{\xi_k^\epsilon(t)}{\epsilon}\diff t=\diff W_k(t)-\epsilon \diff \xi_k^\epsilon(t).
\]
Then, the contribution of $\epsilon \diff \xi_k^\epsilon(t)$ vanishes in the limit $\epsilon\to 0$, and only the contribution $\diff W_k(t)$ remains at the limit. There may be different interpretations of the noise at the limit: at least, It\^o and Stratonovich interpretations are possible candidates. However, recall that Casimir functions $C$ of the stochastic Poisson system~\eqref{prob} are preserved by the solution $y^\epsilon$ of~\eqref{prob_eps}, for all $\epsilon>0$. The It\^o interpretation of the noise is not consistent with this preservation property, whereas the Stratonovich one is, due to the chain rule. As a consequence, the Stratonovich interpretation is the natural candidate for the diffusion approximation limit. Checking rigorously that indeed $y^\epsilon(t)\to y(t)$ in distribution requires additional arguments which are omitted in this work.

\begin{remark}
Let $(t,y,\xi_1,\ldots,\xi_m)\mapsto \varphi^\epsilon(t,y,\xi_1,\ldots,\xi_m)$ define the flow map associated with~\eqref{prob_eps}. Then for all $t\ge 1$, $\epsilon\in(0,1)$ and all $\xi_1,\ldots,\xi_m\in\R$, the mapping
\[
y\in \R^d\mapsto \varphi^\epsilon(t,y,\xi_1,\ldots,\xi_m)
\]
is a Poisson map in the sense of Definition~\ref{def:Pmap}. This may be proved by modifications of the proof of Theorem~\ref{th:flowP}, using the chain rule. The details are left to the reader.
\end{remark}

The multiscale system~\eqref{prob_eps} has components evolving at different time scales: the component $y^\epsilon$ evolves at a time scale of order ${\rm O}(1)$, whereas the Ornstein--Uhlenbeck processes $\xi_1^\epsilon,\ldots,\xi_m^\epsilon$ evolve at a time scale of order ${\rm O}(\epsilon^{-2})$. The definition of effective integrators for the multiscale system~\eqref{prob_eps}, which avoid prohibitive time step size restrictions of the type $h={\rm O}(\epsilon^2)$, and which lead to consistent discretisation of $y(t)$ when $\epsilon\to 0$, is a crucial and challenging problem. This question is briefly studied in Section~\ref{APsplit} below: we define so-called asymptotic preserving schemes (in the spirit of~\cite{BRR}), 
employing the preservation of the geometric structure satisfied by the stochastic Poisson integrators introduced in the next section.

\section{Stochastic Poisson integrators}\label{sec:integrators}

In Section~\ref{sec:poisson}, we have proved that the flow of a stochastic Poisson system of the type~\eqref{prob} satisfies two key properties: it is a Poisson map (see Theorem~\ref{th:flowP}) and it preserves Casimir functions which are associated with the structure matrix $B$. Having the methodology of geometric numerical integration in mind, this motivates us to introduce the concept of a stochastic Poisson integrator for the stochastic Poisson system~\eqref{prob}, see Definition~\ref{defPI}. We then present and analyse a general strategy to derive 
efficient stochastic Poisson integrators, based on a splitting technique, which can be implemented easily for some stochastic Lie--Poisson systems. 
We then proceed with a convergence analysis of the proposed splitting integrators: Theorem~\ref{thm-general} states that, under appropriate conditions, the scheme has in general strong and weak convergence rates equal to $1/2$ and $1$, respectively. First, the analysis is performed for an auxiliary problem~\eqref{auxSDE} with globally Lipschitz continuous nonlinearities, see Lemma~\ref{lemm-aux}. Second, if the system admits a Casimir function with compact level sets, the auxiliary convergence result is applied to get strong and weak error estimates for the SDE~\eqref{prob}. Finally, we show that the proposed stochastic Poisson integrators based on a splitting technique satisfy an asymptotic preserving property when considering the multiscale SDE~\eqref{prob_eps} in the diffusion approximation regime.

\subsection{Definition and splitting integrators for stochastic (Lie--)Poisson systems}\label{ssec:LP}

Let us recall that symplectic, respectively Poisson, integrators preserve the key features of deterministic and stochastic Hamiltonian systems, respectively deterministic Poisson systems. Such geometric numerical integrators offer various benefits over 
classical time integrators in the deterministic setting, see for instance \cite{HLW02,rl,bcGNI15}. 
We shall now state the definition and study the properties of stochastic Poisson integrators for stochastic Poisson systems~\eqref{prob}. On the one hand, this extends the definition and analysis of deterministic Poisson integrators (see~\cite[Th. 3.1]{hong2020structurepreserving} for another approach). On the other hand, this extends the definition and analysis of stochastic symplectic integrators for stochastic Hamiltonian systems.

% \tobi{(Should we really mention stochastic Hamiltonian systems here? 
% Our topic is Poisson, and we have already explained before that Poisson is a generalization of Hamilton.)} 
% \ceb{DC+CEB: We have mentioned such generalisation for the continuous problem, not for the schemes. 
% Hence, we would like to keep this sentence.}

%extend the definition of deterministic Poisson integrators to the stochastic Poisson system \eqref{prob}, see also \cite[Th. 3.1]{hong2020structurepreserving}. 
%The definition of stochastic Poisson integrators below extends the definition of stochastic symplectic integrators
%for stochastic Hamiltonian systems.
We first consider general stochastic Poisson integrators, and then focus the discussion on a class of splitting integrators.

\subsubsection{Stochastic Poisson integrators}\label{sssec:PI}
The following notation is used below. The time step size is denoted by $h>0$. 
A numerical scheme is defined as follows: for all $n\ge 1$,
\begin{equation}\label{eq:integrator}
y^{[n]}=\Phi_h(y^{[n-1]},\Delta_n W_1,\ldots,\Delta_n W_m),
\end{equation}
with Wiener increments $\Delta_nW_k=W_k(nh)-W_k((n-1)h)$, $k=1,\ldots,m$. The Wiener increments are independent centered real-valued Gaussian random variables with variance $h$. The mapping $\Phi_h$ is referred to as the integrator.

\begin{definition}\label{defPI}
A numerical scheme~\eqref{eq:integrator} for the stochastic Poisson system~\eqref{prob} is called 
a \emph{stochastic Poisson integrator} if
\begin{itemize}
\item for all $h>0$ and all $\Delta w_1,\ldots,\Delta w_m\in \R$, 
the mapping $$y\mapsto \Phi_h(y,\Delta w_1,\ldots,\Delta w_m)$$ is a Poisson map (in the sense of Definition~\ref{def:Pmap}),
\item if $C$ is a Casimir of the stochastic Poisson system~\eqref{prob}, then $\Phi_h$ preserves $C$, precisely
\[
C(\Phi_h(y,\Delta w_1,\ldots,\Delta w_m))=C(y)
\]
for all $y\in\R^d$, $h>0$ and $\Delta w_1,\ldots,\Delta w_m\in\R$.
\end{itemize}
\end{definition}
As in the deterministic case, it is seen that standard integrators like the Euler--Maruyama scheme are 
not (stochastic) Poisson integrators. In addition, it is a difficult task to construct Poisson integrators  for the general Poisson systems, see \cite[Chapter VII.4.2]{HLW02} for deterministic problems. Therefore, the design of stochastic Poisson integrators requires to exploit the special structure for each considered problem. In this article, we focus 
on constructing and analyzing stochastic Poisson integrators for stochastic Lie--Poisson systems. More precisely, we propose 
explicit Poisson integrators for a large class of stochastic 
Lie--Poisson systems using a splitting strategy. In Section~\ref{sect-numexp}, we will exemplify this strategy for three models introduced in Section~\ref{sec:poisson}: 
the stochastic Maxwell--Bloch equations (Example~\ref{expl-MB} and Subsection~\ref{ssec:PMB}), 
the stochastic free rigid body equations~\eqref{srb} (Example~\ref{expl-SRB} and Subsection~\ref{ssec:PRB}), 
as well as the stochastic sine--Euler equations (Example~\ref{expl-SE} and Subsection~\ref{ssec:PSE}). 

\subsubsection{Splitting integrators for stochastic Poisson systems}\label{sssec:SP}

We first propose an abstract splitting integrator for general stochastic Poisson systems~\eqref{prob}. 
We then focus on stochastic Lie--Poisson systems~\eqref{slp} and propose implementable stochastic Poisson integrators for this class of SDEs, which includes 
the three examples mentioned above.

%\tobi{\sout{These two sentences repeat more or less what we have written at the end of 3.1.1. I suggest to delete these two sentences and start for example as follows: 
%``The key observation made in \cite[p.3044]{MR1246065} is that a wide class of deterministic Lie--Poisson systems can be split into subsystems which are all linear. This was used in \cite{MR1246065}
%for the construction of very efficient geometric integrators for deterministic Lie--Poisson systems.
%Inspired by ...''}}
%\ceb{DC+CEB: Ok but we have to say clearly that at an abstract level we can work with Poisson systems, but in practice we consider Lie--Poisson systems.}
%\tobi{Ah ok, now I see the point. I make a new suggestion below.}
% 
%%It is possible to propose abstract splitting integrators for general stochastic Poisson systems, however it is possible to propose implementable integrators for a wide class of (stochastic) Lie--Poisson systems, including the three examples mentioned above. 
%Indeed, we rely on the key observation made in \cite[p.3044]{MR1246065} to construct implementable splitting schemes for a wide class of deterministic Lie--Poisson systems: roughly, for such systems, the splitting strategy leads to solve subsystems which are all linear. 
%Inspired by~\cite{MR1246065}, we propose and analyse efficient explicit Poisson integrators for stochastic Lie--Poisson systems. 

The key observation made in \cite[p.3044]{MR1246065} is that a wide class of deterministic Lie--Poisson systems can be split into subsystems which are all linear. This was used in \cite{MR1246065}
for the construction of very efficient geometric integrators for deterministic Lie--Poisson systems.
Inspired by~\cite{MR1246065}, we propose and analyse efficient explicit Poisson integrators for stochastic Lie--Poisson systems. On an abstract level, our splitting approach is not restricted to Lie--Poisson systems and could also be applied to general stochastic Poisson systems~\eqref{prob}.

%\coco{DC: Add Theorem 6.2 from GNI, Gaussian wave paquet?? If yes, also add a text below Vlasov, see above??}

%, that we now recall for the convenience of the reader. 
%A stochastic Lie--Poisson system is given by 
%\begin{align}\label{slp}
%\diff y(t)=B(y(t))\nabla H(y(t))\,\diff t+
%B(y(t))\sum_{k=1}^m\nabla \widehat H_k(y(t))\circ\,\diff W_k(t), 
%\end{align}
%where the structure matrix $B(y)$ depends linearly on $y$
%$$
%B_{ij}(y)=\sum_{k=1}^d\coco{b}_{ji}^ky_k\quad\text{for}\quad i,j=1,\ldots,d
%$$
%with given structure constants $\coco{b}_{ji}^k$. 

Let us consider a stochastic Poisson system of the type~\eqref{prob}, and assume that the Hamiltonian $H$ can be split as follows:
\[
H=\sum_{k=1}^{p}H_k.
\]
for some $p\ge 1$, where the Hamiltonian functions $H_1,\ldots,H_p$ have the same regularity as $H$.

To define the abstract splitting schemes for~\eqref{prob}, it is convenient to define the flows associated to the subsystems:
\begin{itemize}
\item for each $k=1,\ldots,p$, let $(t,y)\in\R^+\times\R^d\mapsto \varphi_k(t,y)$ be the flow associated with the ordinary differential equation $\dot{y}_k=B(y_k)\nabla H_k(y_k)$;
\item for each $k=1,\ldots,m$, let $(t,y)\in\R\times\R^d\mapsto \widehat{\varphi}_k(t,y)$ be the flow associated with the ordinary differential equation $\dot{y}_k=B(y_k)\nabla \widehat H_k(y_k)$.
\end{itemize}
Note that it is sufficient to consider $\varphi_1(t,\cdot),\ldots,\varphi_p(t,\cdot)$ for $t\ge 0$, however the mappings $\widehat{\varphi}_1(t,\cdot),\ldots,\widehat{\varphi}_k(t,\cdot)$ need to be considered for $t\in\R$.

Below, we shall also use the notation $\exp(hY_{H_k})=\varphi_k(h,\cdot)$ and 
$\exp(hY_{\widehat H_k})=\widehat \varphi_k(h,\cdot)$, where $Y_{H_k}=B\nabla H_k$, resp. $Y_{\widehat H_k}=B\nabla\widehat H_k$, to denote the vector fields of the corresponding differential equations. 
For the definition of the splitting integrators below, it is essential to note 
that the exact solution of the Stratonovich stochastic differential equation 
$\diff y_k=B(y_k)\nabla \widehat H_k(y_k)\circ \diff W_k(t)$ is given by 
$y_k(t)=\widehat \varphi_k(W_k(t),y_k(0))$.

As explained above, closed-form expressions for the flows $\varphi_k$ and $\widehat\varphi_k$ are unknown in general but can be obtained for a wide class of stochastic Lie--Poisson systems
\begin{equation}\label{slp}
\left\lbrace
\begin{aligned}
&\diff y(t)=B(y(t))\nabla H(y(t))\,\diff t+
B(y(t))\sum_{k=1}^m\nabla \widehat H_k(y(t))\circ\,\diff W_k(t),\\
&B_{ij}(y)=\sum_{k=1}^db_{ji}^ky_k\quad\text{for}\quad i,j=1,\ldots,d, 
\end{aligned}
\right.
\end{equation}
%\ceb{instead of repeting: put a reference to Section~2.1? but we can keep~\eqref{slp}} 
%\coco{DC: no strong opinion}
where the structure matrix $B(y)$ depends linearly on $y$. For the examples of stochastic Lie--Poisson systems introduced in Section~\ref{sec:examples}, 
below we design explicit splitting schemes which can be easily implemented by a splitting strategy. In the sequel, we analyse the geometric and convergence properties of splitting integrators in an abstract framework, where it is not assumed that the flows $\varphi_k$ and $\hat{\varphi}_k$ can be computed exactly. In particular, the assumption that the structure matrix $B$ depends linearly on $y$ is not required in the analysis. Note also that expressions of the flows may also be known for some stochastic Poisson systems which are not Lie--Poisson problems, in which case the abstract analysis would also be applicable.

%\tobi{\sout{
%However we present the splitting integrators and their analysis in a general abstract framework, 
%even when the practical implementation is not available.
%}
%However, the splitting approach is not restricted to stochastic Lie-Poisson systems and could also be applied to general 
%stochastic Poisson system, provided that the flows $\varphi_k$ and $\widehat{\varphi}_k$ could be computed exactly.
%For this reason we present the splitting integrators and their analysis in a general abstract framework.
%The special form \eqref{Lie-Poisson.B} of the structure matrix is \emph{not} required in our analysis.
%}
%\coco{DC: I would like to have a better formulation than the above}

%We place ourself in the special situation when one can find a splitting of the Hamiltonians 
%$$
%H=\sum_{k=1}^pH_k\quad\text{and}\quad\widehat H=\sum_{k=1}^m\widehat H_k
%$$
%such that each corresponding ODE and SDE subsystems (that is $\dot y_k=B(y)\nabla H_k(y)$ and 
%$\diff y_k(t)=B(y)\nabla \widehat H_k(y(t))\circ\,\diff W_k(t)$) can be solved explicitly. 
%\coco{This is only for computational reasons, our theoretical analysis being 
%still valid if such closed-form solutions are not explicitly known.}
%Specific examples of applications follow. 

%An explicit stochastic Lie--Poisson integrator for the SDE~\eqref{slp} is then given by the composition 
% \tobi{(Delete the following sentence?)}\ceb{DC+CEB: we prefer to keep it} 
We are now in position to define splitting integrators for the stochastic Poisson system~\eqref{prob}, 
which will be exemplified in the case of stochastic Lie--Poisson systems~\eqref{slp}. This general splitting integrator is given by
\begin{align}\label{slpI}
\Phi_h(\cdot)&=\Phi_h(\cdot,\Delta W_1,\ldots,\Delta W_m)=\exp(hY_{H_p})\circ\exp(hY_{H_{p-1}})\circ\ldots\circ\exp(hY_{H_1}) \nonumber\\
&\circ\exp(\Delta W_mY_{\widehat H_m})\circ\exp(\Delta W_{m-1}Y_{\widehat H_{m-1}})
\circ\ldots\circ\exp(\Delta W_1Y_{\widehat H_1}).
\end{align}
%where $\coco{\varphi_k(h,\cdot)=}\exp(hY_{H_k})$ denotes the exact flow to the ODE subsystem $\dot y_k=B(y)\nabla H_k(y)$, for $k=1,\ldots,p$,  
%and $\coco{\varphi_k(\Delta W_k,\cdot)=}\exp(\Delta W_kY_{\widehat H_k})$, for $k=1,\ldots,m$, the exact flow of the 
%SDE subsystem $\diff y_k(t)=B(y)\nabla \widehat H_k(y(t))\circ\,\diff W_k(t)$. Here, we denote the 
%corresponding vector fields by $Y_{H_k}=B\nabla H_k$, resp. $Y_{\widehat H_k}=B\nabla \widehat H_k$. 
%\coco{Since the flows of all these subsystems are (stochastic) Poisson maps and they respect the Casimirs, so does 
%the above composition and hence the numerical scheme \eqref{slpI} is a stochastic Poisson integrator for Lie--Poisson systems.}

It is immediate to check the following fundamental result.
\begin{proposition}\label{propo:sPi}
The splitting integrator~\eqref{slpI} is a stochastic Poisson integrator, in the sense of Definition~\ref{defPI}, 
for the stochastic Poisson system~\eqref{prob}.
\end{proposition}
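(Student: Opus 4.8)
The plan is to prove Proposition~\ref{propo:sPi} by exploiting the two structural facts established earlier in the paper: first, that each elementary flow appearing in the composition~\eqref{slpI} is itself a Poisson map that preserves Casimir functions, and second, that both the Poisson map property and the Casimir-preservation property are stable under composition. The key observation is that the splitting integrator $\Phi_h$ is built entirely out of time-$t$ (respectively $W_k$-shifted) flows of the auxiliary Poisson systems $\dot y = B(y)\nabla H_k(y)$ and $\dot y = B(y)\nabla \widehat H_k(y)$, each of which shares the \emph{same} structure matrix $B$ as the full system~\eqref{prob}.

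The first step is to verify that each factor is a Poisson map. For fixed $h>0$, the map $\exp(hY_{H_k})=\varphi_k(h,\cdot)$ is the flow of a deterministic Poisson system with structure matrix $B$ and Hamiltonian $H_k$, so Theorem~\ref{th:flowP} applies directly (the stochastic case reduces to the deterministic one by taking all $\widehat H_j=0$), giving that $\varphi_k(h,\cdot)$ is a Poisson map. Similarly, for fixed values $\Delta w_k\in\R$, the map $\exp(\Delta w_k Y_{\widehat H_k})=\widehat\varphi_k(\Delta w_k,\cdot)$ is the time-$\Delta w_k$ flow of the deterministic Poisson system with Hamiltonian $\widehat H_k$ and the same structure matrix $B$; again Theorem~\ref{th:flowP} yields that it is a Poisson map. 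Since a composition of Poisson maps is a Poisson map (as noted right after Definition~\ref{def:Pmap}), the full composition defining $\Phi_h(\cdot,\Delta w_1,\ldots,\Delta w_m)$ is a Poisson map for every fixed $h>0$ and $\Delta w_1,\ldots,\Delta w_m\in\R$, which is the first bullet of Definition~\ref{defPI}.

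The second step is Casimir preservation. Here the crucial point is that a Casimir function $C$ is defined solely through the structure matrix $B$ via $\nabla C(y)^T B(y)=0$, independently of any Hamiltonian. Thus $C$ is simultaneously a Casimir function for every one of the subsystems $\dot y=B(y)\nabla H_k(y)$ and $\dot y=B(y)\nabla\widehat H_k(y)$. By the discussion following the definition of Casimir functions (which rests on~\eqref{dH.Poisson.bracket} applied to each deterministic subsystem), each elementary flow preserves $C$ exactly: $C(\varphi_k(t,y))=C(y)$ and $C(\widehat\varphi_k(t,y))=C(y)$ for all $t$. Preservation is trivially stable under composition, so $C(\Phi_h(y,\Delta w_1,\ldots,\Delta w_m))=C(y)$ for all arguments, establishing the second bullet.

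I do not anticipate a genuine obstacle, which is why the statement is phrased as ``immediate to check''; the entire content is reduction to the already-proved Theorem~\ref{th:flowP} and to the composition-stability of the two defining properties. The only point requiring a little care is the observation that one may invoke Theorem~\ref{th:flowP} for the \emph{deterministic} subsystems by specialising it (setting the relevant auxiliary Hamiltonians to zero), and that the hypotheses of that theorem—global well-posedness and $\mathcal C^1$ dependence on the initial condition—either hold by assumption on the flows or can be imposed locally; in the abstract setting where the elementary flows need not be computable in closed form, one simply assumes they are well defined and of class $\mathcal C^1$, exactly as in Theorem~\ref{th:flowP}.
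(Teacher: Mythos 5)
Your proof is correct and follows essentially the same route as the paper's: each factor in the composition~\eqref{slpI} is a Poisson map by Theorem~\ref{th:flowP}, Poisson maps are stable under composition, and Casimir preservation for each factor follows because a Casimir depends only on the structure matrix $B$, which is common to all subsystems. Your extra remark on specialising Theorem~\ref{th:flowP} to the deterministic subsystems (and viewing $\widehat\varphi_k(\Delta w_k,\cdot)$ as a deterministic flow at possibly negative time) is a harmless elaboration of what the paper states more tersely.
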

\begin{proof}
Observe that for any $h>0$ and any real numbers $\Delta w_1,\ldots,\Delta w_m$, the mapping $\Phi_h(\cdot,\Delta w_1,\ldots,\Delta w_m)$ is a composition of flow maps $\varphi_k(h,\cdot)$, $k=1,\ldots,p$ and $\widehat\varphi_k(\Delta w_k,\cdot)$, $k=1,\ldots,m$.
Owing to Theorem~\ref{th:flowP}, all of these flow maps are Poisson maps (since they are flow maps of either deterministic or stochastic Poisson systems).

In addition, if $C$ is a Casimir function of the stochastic Poisson system~\eqref{prob}, then $C$ is preserved by each of the flow maps $\varphi_k(h,\cdot)$, $k=1,\ldots,p$ and $\widehat\varphi_k(\Delta w_k,\cdot)$, $k=1,\ldots,m$. Indeed, recall that the definition of a Casimir only depends on the structure matrix $B$, and not on the Hamiltonian functions, and all the associated vectors fields are of the type $Y_{H_k}=B\nabla H_k$ and $Y_{\widehat H_k}=B\nabla\widehat H_k$: the associated flow maps thus preserve $C$.
As a consequence, the general splitting integrator $\Phi_h(\cdot,\Delta w_1,\ldots,\Delta w_m)$ also preserves the Casimir functions $C$ of the stochastic Poisson system~\eqref{prob}.
This concludes the proof that the splitting scheme~\eqref{slpI} is a stochastic Poisson integrator.
\end{proof}

Before proceeding to the convergence analysis for the splitting integrators~\eqref{slpI}, 
it is worth exploiting the fact that they are stochastic Poisson integrators 
to state that the numerical solution remains bounded if the considered stochastic Poisson system~\eqref{prob} admits a Casimir function $C$ with compact level sets. We refer to Proposition~\ref{propo:global} for the statement of a similar result for the solution of the stochastic Poisson system~\eqref{prob}, in  particular the assumption on compact level sets.

\begin{proposition}\label{propo:numerik}
Assume that the stochastic Poisson system~\eqref{prob} admits a Casimir function $C$ which has compact level sets. Consider the stochastic Poisson integrator $y^{[n+1]}=\Phi_h(y^{[n]})$ 
given by \eqref{slpI}. Then, for any initial condition $y^{[0]}=y_0\in\mathbb R^d$, for all $t\geq0$, 
almost surely one has the following bound for the numerical solution
\[
\underset{h>0}\sup~\underset{n\ge 0}\sup~\|y^{[n]}\|\le  R(y^{[0]})=\max_{y\in\mathbb R^d, C(y)=C(y^{[0]})}\|y\|.
\]
\end{proposition}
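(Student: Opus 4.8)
The plan is to reduce everything to the Casimir-preservation property that has already been established for the splitting scheme. Indeed, by Proposition~\ref{propo:sPi} the integrator $\Phi_h$ defined in~\eqref{slpI} is a stochastic Poisson integrator in the sense of Definition~\ref{defPI}, so in particular it preserves every Casimir function $C$ of the system~\eqref{prob}. This is the single fact that drives the whole argument; the rest is the same compactness reasoning as in the proof of Proposition~\ref{propo:global}.

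First I would prove, by induction on $n$, that almost surely
\[
C(y^{[n]})=C(y^{[0]})\qquad\text{for all } h>0 \text{ and all } n\ge 0.
\]
The case $n=0$ is trivial. For the induction step, I would apply the Casimir-preservation property of $\Phi_h$ with argument $y=y^{[n]}$ and with the realised Wiener increments $\Delta_{n+1}W_1,\ldots,\Delta_{n+1}W_m$: since $y^{[n+1]}=\Phi_h(y^{[n]},\Delta_{n+1}W_1,\ldots,\Delta_{n+1}W_m)$, this yields $C(y^{[n+1]})=C(y^{[n]})=C(y^{[0]})$. Thus every iterate $y^{[n]}$ lies in the level set $\{y\in\mathbb{R}^d\,\colon\, C(y)=C(y^{[0]})\}$.

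Next I would invoke the compactness hypothesis: the level set $\{y\in\mathbb{R}^d\,\colon\, C(y)=C(y^{[0]})\}$ is a nonempty compact subset of $\mathbb{R}^d$ (it contains $y^{[0]}$), so the continuous map $y\mapsto\|y\|$ attains a finite maximum $R(y^{[0]})$ on it, exactly as in Proposition~\ref{propo:global}. Combining this with the previous step, every iterate satisfies $\|y^{[n]}\|\le R(y^{[0]})$. Since the bound $R(y^{[0]})$ depends only on the initial condition and not on $h$ or $n$, I can take the supremum over $n\ge 0$ and then over $h>0$ to obtain the stated almost-sure estimate.

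I do not expect any serious obstacle here: the result is a direct consequence of the exact preservation of $C$ by the splitting scheme. The only point worth emphasising is the \emph{uniformity} of the bound in both the time-step size $h$ and the step index $n$, which is automatic precisely because the controlling quantity $R(y^{[0]})$ is determined solely by $C(y^{[0]})$; this uniformity is what distinguishes the behaviour of the geometric integrator from that of a generic explicit scheme, and it is the feature that makes the statement useful for the subsequent convergence analysis.
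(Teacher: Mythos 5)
Your proposal is correct and follows exactly the same route as the paper's proof: invoke Proposition~\ref{propo:sPi} to get exact preservation of the Casimir $C$ along the iterates, then use compactness of the level set to bound $\|y^{[n]}\|$ by $R(y^{[0]})$, which is uniform in $h$ and $n$ by construction. The only difference is that you spell out the induction and the attainment of the maximum more explicitly than the paper does.
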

\begin{proof}
The splitting scheme~\eqref{slpI} is a stochastic Poisson integrator (owing to Proposition~\ref{propo:sPi}), thus it preserves the Casimir function $C$: therefore for all $n\ge 1$,
\[
C(y^{[n]})=C(y^{[n-1]})=\ldots=C(y^{[0]}).
\]
Note that $R(y^{[0]})<\infty$, since by assumption the Casimir function $C$ has compact level sets. Therefore one obtains
\[
\|y^{[n]}\|\le R(y^{[0]})
\]
for all $n\ge 0$ by the definition of $R(y^{[0]})$. This concludes the proof.
\end{proof}

%Recall that the assumption on the level sets above ensures the global well-posedness for the stochastic differential equation~\eqref{prob}, 
%see Proposition~\ref{propo:global}. Since the Casimir is preserved by the proposed stochastic Poisson integrator, 
%one obtains $C(y^{[n]})=C(y^{[0]})$ for all $n\ge 0$ and all $h>0$. The claimed bound for the numerical solution then follows 
%from the compact level sets assumption.

\begin{remark}\label{rem-xchange}
The stochastic Poisson integrator~\eqref{slpI} employs a  Lie--Trotter splitting strategy. 
Changing the orders of integration of the deterministic and stochastic parts yields the following alternative to~\eqref{slpI}
\begin{align*}
\Phi_h(\cdot)&=\Phi_h(\cdot,\Delta W_1,\ldots,\Delta W_m)\\
=&\exp(\Delta W_mY_{\widehat H_m})\circ\exp(\Delta W_{m-1}Y_{\widehat H_{m-1}})\circ\ldots\circ\exp(\Delta W_1Y_{\widehat H_1})\\
&\circ\exp(hY_{H_p})\circ\exp(hY_{H_{p-1}})\circ\ldots\circ\exp(hY_{H_1}).
\end{align*}
This alternative scheme is also a stochastic Poisson integrator, which satisfies Propositions~\ref{propo:sPi} and~\ref{propo:numerik}. The theoretical analysis of that scheme and associated numerical experiments are not reported in the present article.
%\coco{DC: Perhaps no need of the following sentence?}The convergence analysis of this scheme is omitted, it would be similar to the analysis of the scheme~\eqref{slpI} performed below. 
%We have not performed a convergence analysis for this alternative splitting but we expect the same orders of convergence as for the stochastic Poisson integrator~\eqref{slpI} (this was confirmed by preliminary numerical experiments). 
\end{remark}

\begin{remark}\label{rem:order2}
A numerical method of weak order $2$ can be designed using the strategy developed in~\cite{MR3570281}. The integrator is a combination of three mappings and depends on an additional random variable $\gamma_n$, uniformly distributed in $\{-1,1\}$:
\begin{equation}\label{eq:order2}
y^{[n]}=\Phi_{h,\gamma_n}(y^{[n-1]})=\Phi_{h/2}^{det,S}\circ \Phi_{h,\gamma_n}^{sto}(\cdot,\Delta_n W_1,\ldots,\Delta_n W_m)\circ \Phi_{h/2}^{det,S}(y^{[n-1]}),
\end{equation}
where
\[
\Phi_{h/2}^{det,S}=\exp(\frac{h}{4}Y_{H_{1}})\circ\ldots\circ\exp(\frac{h}{4}Y_{H_{p-1}})\circ\exp(\frac{h}{2}Y_{H_p})\circ\exp(\frac{h}{4}Y_{H_{p-1}})\circ\ldots\circ\exp(\frac{h}{4}Y_{H_1})
\]
is obtained using a Strang splitting integrator with time step size $h/2$ for the deterministic part of the equation, and
\[
\Phi_{h,\gamma_n}^{sto}(\cdot,\Delta_n W_1,\ldots,\Delta_n W_m)=\begin{cases}
\exp(\Delta W_mY_{\widehat H_m})\circ\ldots\circ\exp(\Delta W_1Y_{\widehat H_1}),\quad \gamma_n=1\\
\exp(\Delta W_1Y_{\widehat H_1})\circ\ldots\circ\exp(\Delta W_mY_{\widehat H_{m}}),\quad \gamma_n=-1
\end{cases}
\]
is obtained using a Lie--Trotter splitting integrator $\Phi_{h,\gamma_n}^{sto}$ with time step size $h$ applied to the stochastic part of the equation, where the order of the integration depends on $\gamma_n$.

It is straightforward to check that the numerical scheme~\eqref{eq:order2} is a stochastic Poisson integrator, using the same arguments as in the proof of Proposition~\ref{propo:sPi}. Numerical experiments which illustrate the behaviour of this scheme and weak convergence with order $2$ will be reported below in Section~\ref{sect-numexp}. However, we do not give details concerning the theoretical analysis of the scheme~\eqref{eq:order2}. 

We also refer to~\cite{MR3927434,MR2409419} for other possible constructions of higher order splitting methods for SDEs. Finally, another possible strategy to design higher order integrators would be to use modified equations, like in~\cite{MR2970274}.
%Higher order methods may be obtained using more refined splitting strategies, such as Strang splitting techniques. 
%We refer for instance to~\cite{MR2409419,MR3927434,MR3570281}, especially concerning the construction of high weak order schemes. 
%We do not pursue such questions in the present work. 
\end{remark}

\subsection{Convergence analysis}\label{sec:cv}
The objective of this section is to prove a general strong and weak convergence result for stochastic Poisson integrators~\eqref{slpI} defined by the splitting strategy. Note that we assume that the stochastic Poisson system~\eqref{prob} 
admits a Casimir function with compact level sets: as explained above, this condition ensures global well-posedness for the continuous problem, and provides almost sure bounds for the exact and numerical solutions (Propositions~\ref{propo:global} and~\ref{propo:numerik}). As a consequence, the general convergence result can be applied to get strong and weak convergence rates for the proposed explicit stochastic Poisson integrator~\eqref{slpI}, when applied to the stochastic rigid body system (Example~\ref{expl-SRB}) and to the stochastic sine--Euler system (Example~\ref{expl-SE}), see Theorems~\ref{thm-srb} and~\ref{thm-se} below respectively. Note that these two SDEs do not have globally Lipschitz continuous coefficients, so for those examples standard explicit schemes such as the Euler--Maruyama  method may fail to converge strongly. The fact that the proposed scheme is a stochastic Poisson integrator is essential to perform the convergence analysis.
However, the general convergence result below cannot be applied to the stochastic Maxwell--Bloch system -- 
the generalisation of the result to that example is not treated in the present work.

%\ceb{mention that we could obtain convergence in probability for MB?} 
%\coco{If you think it is a good idea and not too long, ok. Else perhaps just as a remark/sentence here??}

\begin{theorem}\label{thm-general}
Assume that the stochastic Poisson system~\eqref{prob} admits a Casimir function with compact level sets.

%\tobi{((TJ: I am still confused by the square root in the definition of the mean-square error. All definitions of
%``mean-square error'' I have seen so far have the form $\mathbb E\left[ \norm{\ldots - \ldots}^2 \right]$ without the square root (e.g. Section 1.5 in Kloeden-Platen), in contrast to the root mean-square error. Of course, the error bound remains true no matter if we take the square root on both sides or not. But according to the Kloeden-Platen definition, our method has a mean-square order of $1$ instead of $1/2$, and this could be somewhat confusing for many readers. Or am I missing something here? Moreover, it seems to me that we are using ``strong'' and ``mean-square'' as synonyms. So why don't we replace ``mean-square'' by ``strong'' everywhere? This would avoid ambiguities.))
%}
%\coco{DC: uniform notation?}

\begin{itemize}
\item[Strong convergence] Assume that $B$ is of class $\mathcal{C}^2$, that the mappings $H_1,\ldots,H_p$ are of class $\mathcal{C}^2$, and that the mappings $\widehat{H}_1,\ldots,\widehat{H}_m$ are of class $\mathcal{C}^3$. 
Then the stochastic Poisson integrator~\eqref{slpI} has strong order of convergence equal to $1/2$: for all $T\in(0,\infty)$ and all $y_0\in\R^d$, there exists a real number $\CC(T,y_0)\in(0,\infty)$ such that
\[
\underset{0\le n\le N}\sup~\left(\mathbb E\left[ \norm{ y\left(nh\right)-y^{[n]} }^2 \right] \right)^{1/2}\le \CC(T,y_0)h^{\frac12},
\]
with time step size $h=T/N$, and $y^{[0]}=y_0=y(0)$. 

If $m=1$, then the strong order of convergence is equal to $1$.

%If $m=1$, or in the commutative noise case\ceb{CEB: write details?}\coco{DC: we could give reference to the literature}, 
%the mean-square order of convergence is equal to $1$. \ceb{maybe only the case $m=1$?} \coco{DC: no, since this condition appears in the proof, no?}

\item[Weak convergence] Assume that $B$ is of class $\mathcal{C}^5$, that the mappings $H_1,\ldots,H_p$ are of class $\mathcal{C}^5$, and that the mappings $\widehat{H}_1,\ldots,\widehat{H}_m$ are of class $\mathcal{C}^6$. Then the stochastic Poisson integrator~\eqref{slpI} has weak order of convergence equal to $1$: for all $T\in(0,\infty)$ and all $y_0\in\R^d$, and any test function $\phi\colon\R^d\to\R$ of class $\mathcal{C}^4$ with bounded derivatives, there exists a real number $\CC(T,y_0,\phi)\in(0,\infty)$ such that
\[
\underset{0\le n\le N}\sup~\left|\mathbb E\left[\phi\left(y\left(nh\right)\right)\right]-\mathbb E\left[\phi\left(y^{[n]}\right)\right]\right|\leq \CC(T,y_0,\phi)h.
\]
\end{itemize}
\end{theorem}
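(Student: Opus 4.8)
The plan is to reduce the statement to the case of globally Lipschitz continuous coefficients by a localisation argument, and then to invoke the auxiliary convergence result Lemma~\ref{lemm-aux} for the resulting problem~\eqref{auxSDE}. The mechanism that makes this reduction exact---rather than merely approximate---is the Casimir structure: by Proposition~\ref{propo:global} the exact solution satisfies $\norm{y(t)}\le R(y_0)$ for all $t\ge0$ almost surely, and by Proposition~\ref{propo:numerik} the numerical iterates satisfy $\norm{y^{[n]}}\le R(y_0)$ as well, uniformly in $h$ and $n$. Thus both the exact and the numerical solutions live, almost surely and for all times, inside the fixed compact level set $K=\{y\in\R^d\colon C(y)=C(y_0)\}$.

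First I would fix $R=R(y_0)+1$ and replace the Hamiltonians $H_1,\ldots,H_p$ and $\widehat H_1,\ldots,\widehat H_m$ (and, if necessary, the entries of $B$) by compactly supported modifications that coincide with the original data on the ball $\{\norm{y}\le R\}$ and retain the regularity required in each part of the theorem. Denote the associated subsystem flows by $\varphi_k^R$, $\widehat\varphi_k^R$ and the corresponding splitting map by $\Phi_h^R$. Since the truncated coefficients agree with the original ones on $K\subset\{\norm{y}\le R(y_0)\}$, the exact solution of the truncated SDE~\eqref{auxSDE} coincides almost surely with $y(\cdot)$. The essential observation for the numerical solution is that every subsystem vector field is of Poisson type $B\nabla(\cdot)$ with the \emph{same} structure matrix $B$, so each flow $\varphi_k$, $\widehat\varphi_k$ preserves $C$ and therefore maps $K$ into $K$; on $K$ the truncated flows thus agree with the original ones, and since all intermediate stages of~\eqref{slpI} remain in $K$, one gets $\Phi_h^R=\Phi_h$ along the whole numerical trajectory. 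Consequently the exact and numerical errors for~\eqref{prob} and for~\eqref{auxSDE} are almost surely identical, and it suffices to prove the estimates for the truncated problem.

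At this point I would apply Lemma~\ref{lemm-aux}, whose hypotheses are met because the truncated coefficients are smooth with bounded derivatives, hence globally Lipschitz. I recall the structure of that lemma's proof, which is the technical heart. For the strong estimate one performs a mean-square local error analysis: expanding both the exact Stratonovich flow over one step and the composition of subflows in~\eqref{slpI}, one shows that the one-step error has mean-square order $1$ and mean order $3/2$; the standard fundamental theorem for mean-square convergence of one-step methods then yields global strong order $1/2$. When $m=1$ the single diffusion $B\nabla\widehat H_1\circ\diff W_1$ is integrated exactly by $\exp(\Delta W_1 Y_{\widehat H_1})$, so no iterated integrals of distinct Brownian motions appear and the local orders improve, giving global strong order $1$. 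For the weak estimate one introduces the backward Kolmogorov function $u(t,x)=\mathbb E[\phi(y(t))\mid y(0)=x]$, whose $\mathcal{C}^4$-regularity in $x$ (uniformly on $[0,T]$) follows from the regularity assumptions on $B$, $H_k$, $\widehat H_k$ and $\phi$; a local weak error expansion matching the one-step generator of~\eqref{slpI} with that of the exact flow up to $\mathrm{O}(h^2)$, followed by the usual telescoping over the $N=T/h$ steps, yields global weak order $1$.

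The main obstacle is precisely this local error analysis inside Lemma~\ref{lemm-aux}. For the weak rate, the delicate point is to expand the exact flow and the splitting map to high enough order, to compute the expectations of the iterated Stratonovich integrals produced by the Baker--Campbell--Hausdorff-type corrections, and to verify that the discrepancies at orders $h$ and $h^{3/2}$ cancel so that only an $\mathrm{O}(h^2)$ remainder survives; controlling the remainders uniformly is exactly what the boundedness of derivatives (guaranteed by the truncation) and the high regularity hypotheses are for. For the strong rate, the subtlety is the role of the Lévy areas of the distinct Wiener processes $W_1,\ldots,W_m$: the Lie--Trotter composition cannot reproduce them when $m\ge2$, which is what limits the strong order to $1/2$, whereas their absence when $m=1$ is what permits order $1$.
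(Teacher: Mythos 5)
Your proposal is correct and follows essentially the same route as the paper: use the Casimir-based almost sure bounds from Propositions~\ref{propo:global} and~\ref{propo:numerik} to localise to a compact set, replace the coefficients by compactly supported modifications that agree there, observe that the exact and numerical solutions of the original and truncated problems coincide (your remark that each subflow preserves the Casimir, so all intermediate splitting stages stay in the level set, is exactly the point that makes this identification valid), and then invoke Lemma~\ref{lemm-aux}, with the $m=1$ case falling under the commutative noise condition. The additional sketch of the local error analysis inside Lemma~\ref{lemm-aux} matches the argument the paper gives in Appendix~\ref{app-auxlem}.
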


The convergence theorem stated above concerning the strong and weak rates of convergence of the stochastic Poisson integrator~\eqref{slpI} applied to the stochastic Poisson system~\eqref{prob} is an immediate consequence of the following auxiliary result, which is stated for a general SDE of the type
\begin{equation}\label{auxSDE}
\diff z(t)=\sum_{k=1}^{p}f_k(z(t))\,\diff t+\sum_{k=1}^m\widehat f_k(z(t))\circ\,\diff W_k(t),
\end{equation}
with functions $f_k$ and $\widehat{f}_k$ which are globally Lipschitz continuous.

\begin{lemma}\label{lemm-aux}
Consider the auxiliary splitting scheme
\begin{equation}\label{auxscheme}
z^{[n]}=\varphi_p(h,\cdot)\circ\ldots\circ \varphi_1(h,\cdot)\circ\widehat\varphi_m(\Delta W_m^n,\cdot)\circ \ldots\circ \widehat\varphi_1(\Delta W_1^n,\cdot)(z^{[n-1]}),
\end{equation}
with $z^{[0]}=z_0=z(0)$, associated with the auxiliary SDE~\eqref{auxSDE}, where $\varphi_k$ is the flow associated with the ODE $\dot{z}_k=f_k(z_k)$, $k=1,\ldots,p$, and $\widehat \varphi_k$ is the flow associated with the ODE $\dot{z}_k=\widehat f_k(z_k)$.

% \ceb{CEB: suggestion 1) use notation $\varphi_k$ only for the auxiliary problem 2) use notation $\exp(hY_{H_k})$ for the Poisson case}
% \coco{DC: no need, but if you prefere, ok with me}\coco{DC: old comment? not actual anymore or?}

\begin{itemize}
\item[Strong convergence] Assume that the mappings $f_1,\ldots,f_p$ are of class $\mathcal{C}^1$ with bounded derivatives, and that the mappings $\widehat{f}_1,\ldots,\widehat{f}_m$ are bounded and of class $\mathcal{C}^2$ with bounded first and second order derivatives. Then the auxiliary scheme~\eqref{auxscheme} has strong order of convergence equal to $1/2$: for all $T\in(0,\infty)$ and all $z_0\in\R^d$, there exists a real number $\CC(T,z_0)\in(0,\infty)$ such that
\begin{equation}\label{eq:strongaux}
\underset{0\le n\le N}\sup~\left(\mathbb E\left[ \norm{ z\left(nh\right)-z^{[n]} }^2 \right] \right)^{1/2}\le \CC(T,z_0)h^{\frac12}.
\end{equation}

In the commutative noise case, {\it i.e.} if $\widehat{f}_k'(z)\widehat{f}_\ell(z)=\widehat{f}_\ell'(z)\widehat{f}_k(z)$ for all $k,\ell=1,\ldots,m$, the strong order of convergence is equal to $1$.

\item[Weak convergence] Assume that the mappings $f_1,\ldots,f_p$ are of class $\mathcal{C}^4$ with bounded derivatives, and that the mappings $\widehat{f}_1,\ldots,\widehat{f}_m$ are bounded and of class $\mathcal{C}^5$ with bounded first and second order derivatives. Then the auxiliary scheme~\eqref{auxscheme} has weak order of convergence equal to $1$: for all $T\in(0,\infty)$ and all $z_0\in\R^d$, and any test function $\phi:\R^d\to\R$ of class $\mathcal{C}^4$, there exists a real number $\CC(T,z_0,\phi)\in(0,\infty)$ such that
\begin{equation}\label{eq:weakaux}
\underset{0\le n\le N}\sup~\left|\mathbb E\left[\phi\left(z\left(nh\right)\right)\right]-\mathbb E\left[\phi\left(z^{[n]}\right)\right]\right|\leq \CC(T,z_0,\phi)h.
\end{equation}
\end{itemize}
\end{lemma}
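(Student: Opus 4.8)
The plan is to reduce the global strong and weak estimates to local (one-step) error estimates together with uniform moment bounds, and then to apply the fundamental mean-square and weak convergence theorems for one-step approximations of SDEs (see \cite{MR2069903}). I would regard the scheme~\eqref{auxscheme} as a one-step map $\bar z=\Psi_h(z_0,\Delta W_1,\ldots,\Delta W_m)$ with $\Psi_h=\varphi_p(h,\cdot)\circ\cdots\circ\varphi_1(h,\cdot)\circ\widehat\varphi_m(\Delta W_m,\cdot)\circ\cdots\circ\widehat\varphi_1(\Delta W_1,\cdot)$, and compare, over a single step started from a deterministic point $z_0$, the value $\bar z$ with the exact solution $z(h)$ of~\eqref{auxSDE} issued from $z_0$.

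First I would establish the required moment bounds. Under the stated assumptions, the It\^o form of~\eqref{auxSDE} has globally Lipschitz continuous coefficients, since the Stratonovich-to-It\^o correction $\tfrac12\sum_{k=1}^m \widehat f_k'\widehat f_k$ is Lipschitz (the $\widehat f_k$ being bounded with bounded first and second derivatives); hence the exact solution has finite moments of every order, bounded on $[0,T]$. For the scheme, each flow $\varphi_k(h,\cdot)$ is globally Lipschitz with constant $e^{L_kh}=1+\mathrm{O}(h)$ by Gr\"onwall's lemma, while each stochastic subflow satisfies $\norm{\widehat\varphi_k(\Delta W_k,z)-z}\le \norm{\widehat f_k}_{\infty}\,|\Delta W_k|$ because $\widehat f_k$ is bounded. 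Composing these yields a one-step bound of the form $\norm{\bar z}\le(1+Ch)\norm{z_0}+C\big(h+\sum_{k=1}^m|\Delta W_k|\big)$, from which uniform bounds on $\sup_{0\le n\le N}\mathbb{E}\,\norm{z^{[n]}}^{2q}$ follow by a discrete Gr\"onwall argument together with the Gaussianity of the increments.

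The heart of the proof is the local error analysis, which I would carry out by Taylor-expanding both the exact flow (through the Stratonovich--Taylor expansion of~\eqref{auxSDE}) and the one-step map $\Psi_h$ (through the expansions of the exact subflows $\varphi_k$ and $\widehat\varphi_k$ in their time arguments $h$ and $\Delta W_k$), ordering terms by powers of $h^{1/2}$ with the convention $h\sim(\Delta W_k)^2$. For the strong estimate, the order-$h^{1/2}$ term $\sum_{k}\widehat f_k\,\Delta W_k$ and the order-$h$ drift term $\sum_{k}f_k\,h$ are reproduced exactly, and the leading discrepancy appears among the order-$h$ double stochastic integrals: the ordered composition reproduces the symmetric part $\tfrac12\Delta W_j\Delta W_k$ but not the antisymmetric L\'evy-area part $A_{k\ell}=\tfrac12\int_0^h\big(W_k(s)-W_k(0)\big)\circ\,\diff W_\ell(s)-(k\leftrightarrow\ell)$, the mismatch being $\sum_{k<\ell}\big(\widehat f_k'\widehat f_\ell-\widehat f_\ell'\widehat f_k\big)A_{k\ell}$ up to higher-order terms. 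Since $A_{k\ell}=\mathrm{O}(h)$ in $L^2$ with $\mathbb{E}[A_{k\ell}]=0$, the one-step error is $\mathrm{O}(h)$ in mean square and $\mathrm{O}(h^{3/2})$ in mean, and the fundamental mean-square theorem yields global strong order $1/2$. Under the commutativity assumption $\widehat f_k'\widehat f_\ell=\widehat f_\ell'\widehat f_k$ (automatic when $m=1$) this leading term vanishes; the deterministic--deterministic splitting error is then of higher order $\mathrm{O}(h^2)$, while the surviving drift--diffusion splitting terms are $\mathrm{O}(h^{3/2})$ in $L^2$ and $\mathrm{O}(h^2)$ in mean, so that the global strong order improves to $1$. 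For the weak estimate I would instead check the hypotheses of the weak fundamental theorem, namely that $\bar z$ matches the moments $\mathbb{E}\big[\prod_{s}\big(\bar z^{(i_s)}-z_0^{(i_s)}\big)\big]$ of the exact increment $z(h)-z_0$ up to $\mathrm{O}(h^2)$ for products of at most three components, with a controlled fourth moment; this holds because the odd moments of the increments vanish and the surviving even ones agree thanks to $\mathbb{E}[A_{k\ell}]=0$ and $\mathbb{E}[\Delta W_k\Delta W_\ell]=\delta_{k\ell}h$, giving global weak order $1$.

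I expect the main obstacle to be the bookkeeping of the local error analysis: one must expand the composition of the exact subflows (whose expansion is governed by iterated Lie derivatives of the individual vector fields) and compare it, term by term and by stochastic order, with the Stratonovich--Taylor expansion of the full flow, isolating the L\'evy-area coefficient as the commutator $\widehat f_k'\widehat f_\ell-\widehat f_\ell'\widehat f_k$ for the strong estimate and verifying the third-order moment cancellations for the weak estimate. Throughout, all remainder terms must be controlled in the relevant $L^p$ norm uniformly in the base point $z_0$, which is precisely where the boundedness and bounded-derivative hypotheses on the $\widehat f_k$ and the bounded-derivative (linear-growth) hypotheses on the $f_k$ are used.
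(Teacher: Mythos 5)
Your proposal is correct. The strong-convergence part follows essentially the same route as the paper: reduction to one-step mean and mean-square error estimates via the fundamental theorem of Milstein--Tretyakov, comparison of the Stratonovich--Taylor expansion of the exact flow with the expansion of the composed subflows, and identification of the leading mismatch in the off-diagonal double stochastic integrals (which the paper writes as the difference between $W_k(h)W_\ell(h)$ and $2\int_0^h\int_0^s\diff W_\ell\circ\diff W_k$, and you package as the L\'evy-area term with commutator coefficient $\widehat f_k'\widehat f_\ell-\widehat f_\ell'\widehat f_k$ -- the same object). One small point in your favour: for the commutative case you correctly record the pair of local estimates ($\mathrm{O}(h^{3/2})$ in mean square, $\mathrm{O}(h^{2})$ in mean) needed for the fundamental theorem to deliver global order $1$; the paper states $\mathrm{O}(h^{3/2})$ for both, which does not literally satisfy the hypothesis $p_1\ge p_2+\tfrac12$ of that theorem, though the sharper mean estimate does hold since the surviving order-$h^{3/2}$ terms are centred. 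Where you genuinely diverge is the weak estimate: you verify the moment-matching conditions of the weak fundamental theorem (increment moments of products of up to three components agreeing to $\mathrm{O}(h^2)$, plus a fourth-moment bound), whereas the paper runs a Talay--Tubaro argument, telescoping the weak error through the solution $u$ of the backward Kolmogorov equation and obtaining the one-step expansion $\mathbb E[\psi(z^{[1]})\mid z^{[0]}]=\psi+h\mathcal L\psi+\mathrm{O}(h^2)$ from the semigroup composition $e^{h\widehat{\mathcal L}_1}\cdots e^{h\widehat{\mathcal L}_m}e^{h\mathcal L_1}\cdots e^{h\mathcal L_p}$, exploiting the independence of the $W_k$ through successive conditionings. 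The paper's route is arguably better adapted to splitting schemes, since the one-step weak expansion drops out of the operator composition with no explicit increment-moment bookkeeping; your route is more elementary in that it only manipulates moments of the increments, but you should state more carefully that it is the \emph{differences} of first moments that are $\mathrm{O}(h^2)$ (the first moments themselves contain the $\mathrm{O}(h)$ drift and It\^o-correction contributions, which are reproduced by the scheme because $\mathbb E[W_k(h)^2]=h$), rather than asserting that odd moments vanish. With that clarification, both arguments close under the stated regularity hypotheses.
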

The proof of Lemma~\ref{lemm-aux} is postponed to Appendix~\ref{app-auxlem}. Let us now check how Theorem~\ref{thm-general} is a straightforward corollary of Lemma~\ref{lemm-aux}. Note that if $m=1$, the commutative noise case condition is satisfied.

\begin{proof}[Proof of Theorem~\ref{thm-general}]
Owing to Propositions~\ref{propo:global} and~\ref{propo:numerik}, the exact and numerical solutions of the SDE~\eqref{prob}, resp. scheme~\eqref{slpI}, 
satisfy the almost sure bounds
\[
\underset{t\in[0,T]}\sup~\norm{y(t)}\le R(y_0),\quad \underset{N\ge 1}\sup~\underset{0\le n\le N}\sup~\norm{y^{[n]}}\le R(y_0),
\]
where $R(y_0)=\max_{y\in\mathbb R^d, C(y)=C(y_0)}\|y\|$, and $R(y_0)<\infty$ since $C$ has compact level sets by assumption.

Using the same construction as in the proof of Proposition~\ref{propo:global}, one can define compactly supported functions $f_k$ and $\widehat{f}_k$, such that $f_k(y)=B(y)\nabla H_k(y)$ and $\widehat{f}_k(y)=B(y)\nabla \widehat H_k(y)$ for all $y\in \R^d$ such that $\|y\|\le R(y_0)$. 
In addition, $f_k$ is at least of class $\mathcal{C}^1$ and $\widehat{f}_k$ is at least of class $\mathcal{C}^2$.

Note that with this choice, $y(t)=z(t)$ and $y^{[n]}=z^{[n]}$ for all $t\in[0,T]$ and all $n\in\{0,\ldots,N\}$, where $\bigl(z(t)\bigr)_{t\ge 0}$ is the solution of the auxiliary SDE~\eqref{auxSDE} and $\bigl(z^{[n]}\bigr)_{n\ge 0}$ is obtained by the auxiliary scheme~\eqref{auxscheme}. It remains to apply Lemma~\ref{lemm-aux} to conclude. Note also that it is not necessary to assume that the functions $\phi$, $B$, $H_1,\ldots,H_p$, $\widehat H_1,\ldots,\widehat H_m$ and their derivatives are bounded. This is due to the boundedness of the exact and numerical solutions provided by the preservation of the Casimir function $C$ and the compact level sets assumption.
%
%
% and are globally Lipschitz continuous, see for instance \cite[Sect.~4]{cdr20}. 
%One then applies the auxiliary result, Lemma~\ref{lemm-aux}, to conclude. 
%Indeed, with such a choice of $f_k$ and $\widehat{f}_k$, 
%one has $y(t)=z(t)$ and $y^{[n]}=z^{[n]}$ for all $t\in[0,T]$ and all $n\in\{0,\ldots,N\}$.
\end{proof}

\begin{remark}\label{rem-consistent}
If one considers the following variant of the stochastic Poisson system~\eqref{prob}
\[
\diff y(t)=B(y(t))\nabla H(y(t))\,\diff t+\sum_{k=1}^mB(y(t))\nabla \widehat H_k(y(t))\circ\,\diff W(t)
\]
driven by a single Wiener process $W$ (that is $W_1=\ldots=W_m=W$), the associated variant of the proposed stochastic Poisson 
integrator~\eqref{slpI} reads
\begin{align*}
\Phi_h&=\exp(hY_{H_p})\circ\exp(hY_{H_{p-1}})\circ\ldots\circ\exp(hY_{H_1}) \nonumber\\
&\circ\exp(\Delta WY_{\widehat H_m})\circ\exp(\Delta WY_{\widehat H_{m-1}})
\circ\ldots\circ\exp(\Delta WY_{\widehat H_1}).
\end{align*}
This scheme is not consistent when $m\ge 2$. In the proof of the convergence result Theorem~\ref{thm-general}, more precisely in the proof of Lemma~\ref{lemm-aux}, the independence of the Wiener processes $W_1,\ldots,W_m$ plays a crucial role.
\end{remark}

\subsection{Asymptotic preserving schemes in the diffusion approximation regime}\label{APsplit}

The objective of this section is to use the proposed splitting stochastic Poisson integrators~\eqref{slpI} in order to define effective numerical schemes for the discretisation of the multiscale system~\eqref{prob_eps} described in Section~\ref{sec:multi}. The challenge is to obtain a good behaviour of the numerical scheme when $\epsilon\to 0$. On the one hand, one needs to avoid time step size restrictions of the type $h={\rm O}(\epsilon)$ or $h={\rm O}(\epsilon^2)$, which would be prohibitive when $\epsilon$ is small. On the other hand, it would be desirable to have a convergence (in distribution) of the type $y^{\epsilon,[n]}\underset{\epsilon\to 0}\to y^{[n]}$, for all fixed $h>0$ and $n\ge 1$, to reproduce the diffusion approximation result Proposition~\ref{propo:multi} at the discrete time level. Indeed, if the two requirements above are satisfied, the integrator can be used to approximate both~\eqref{prob} and~\eqref{prob_eps}, without the need to adapt the time step size $h$ when $\epsilon$ vanishes.

The class of numerical methods which satisfy the two requirements above is known as \emph{asymptotic preserving} schemes. We refer to the recent work~\cite{BRR} where asymptotic preserving schemes were introduced for a class of stochastic differential equations of the type~\eqref{prob_eps}. Note that a standard Euler--Maruyama scheme does not satisfy the asymptotic preserving property. Recall that for this notion of asymptotic preserving schemes, the convergence is understood in the sense of convergence in distribution of random variables. Using the splitting strategy allows us to design other examples of asymptotic preserving schemes for~\eqref{prob_eps}, such that the corresponding limit scheme (obtained when $\epsilon\to 0$ with fixed time step size $h>0$) is the splitting stochastic Poisson integrator~\eqref{slpI}.

We propose the following integrator for the multiscale system~\eqref{prob_eps}: for any $\epsilon\in(0,1)$ and any time step size $h>0$, for all $n\ge 1$, set
\begin{align}\label{APscheme}
y^{\epsilon,[n]}&=\exp(hY_{H_p})\circ\ldots\circ\exp(hY_{H_1}) \nonumber\\
&\circ\exp\left(\frac{h\xi_{m}^{\epsilon,[n]}}{\epsilon}Y_{\widehat H_m}\right)\circ\ldots
\circ\exp\left(\frac{h\xi_{1}^{\epsilon,[n]}}{\epsilon}Y_{\widehat H_1}\right)(y^{\epsilon,[n-1]}),
\end{align}
where, for each $k=1,\ldots,m$, the Ornstein--Uhlenbeck process $\xi_k^\epsilon$ is discretised using the linear implicit Euler scheme
\[
\xi_k^{\epsilon,[n]}=\xi_{k}^{\epsilon,[n-1]}-\frac{h}{\epsilon^2}\xi_{k}^{\epsilon,[n]}+\frac{\Delta_n W_k}{\epsilon}=\frac{1}{1+\frac{h}{\epsilon^2}}\Bigl(\xi_k^{\epsilon,[n-1]}+\frac{\Delta_n W_k}{\epsilon}\Bigr).
\]
Note that $C(y^{\epsilon,[n]})=C(y^{\epsilon,[0]})$ for all $n\ge 0$, if $C$ is a Casimir function of the stochastic Poisson system~\eqref{prob}. If $C$ has compact level sets, this yields the following variant of the bound of Proposition~\ref{propo:numerik},
\[
\underset{\epsilon\in(0,1)}\sup~\underset{h>0}\sup~\underset{n\ge 0}\sup~\|y^{[n]}\|\le  R(y^{[0]})=\max_{y\in\mathbb R^d, C(y)=C(y^{[0]})}\|y\|,
\]
which is uniform over $\epsilon$.

Observe that for all $n\ge 1$ and $h>0$, one has
\[
\frac{h\xi_{k}^{\epsilon,[n]}}{\epsilon}=\Delta_nW_k+\epsilon\bigl(\xi_k^{\epsilon,[n-1]}-\xi_k^{\epsilon,[n]}\bigr)\underset{\epsilon\to 0}\to \Delta_nW_k.
\]

%Below, the convergence results are understood to hold (at least) in distribution. 
%To avoid stability issues and prohibitive restrictions on the stepsize of the type $h={\rm O}(\epsilon)$, the Ornstein--Uhlenbeck components 
%$\xi_1^{\epsilon},\ldots,\xi_m^{\epsilon}$ 
%in the system~\eqref{prob_eps} are discretized using, for instance, a linear implicit Euler--Maruyama scheme:
%\[
%\xi_k^{\epsilon,[n]}=\xi_{k}^{\epsilon,[n-1]}-\frac{h}{\epsilon^2}\xi_{k}^{\epsilon,[n]}+\frac{\Delta_n W_k}{\epsilon},
%\]
%where we recall that $\Delta_nW_k=W_k(nh)-W_k((n-1)h)$ denote the Wiener increments. 
%This leads us to propose the following integrator for the multiscale system \eqref{prob_eps}
%\begin{align}\label{APscheme}
%y^{\epsilon,[n]}&=\exp(hY_{H_p})\circ\ldots\circ\exp(hY_{H_1}) \nonumber\\
%&\circ\exp\left(\frac{h\xi_{m}^{\epsilon,[n]}}{\epsilon}Y_{\widehat H_m}\right)\circ\ldots
%\circ\exp\left(\frac{h\xi_{1}^{\epsilon,[n]}}{\epsilon}Y_{\widehat H_1}\right)(y^{\epsilon,[n-1]}).
%\end{align}
%Observe that the Wiener increments $\Delta_n W_k$ in the splitting integrator~\eqref{slpI} for the original stochastic Poisson problem~\eqref{prob} 
%are replaced by $\frac{h\xi_{k}^{\epsilon,[n]}}{\epsilon}$ in the scheme above. 
%The definition of the scheme~\eqref{APscheme} is motivated by the following observation (see the formal explanation of convergence in 
%the multiscale system~\eqref{prob_eps}):
%\[
%\frac{h\xi_{k}^{\epsilon,[n]}}{\epsilon}=\Delta_nW_k+\epsilon\bigl(\xi_k^{\epsilon,[n-1]}-\xi_k^{\epsilon,[n]}\bigr)\underset{\epsilon\to 0}\to \Delta_nW_k.
%\]
By a recursion argument, it is then straightforward to check that
\[
y^{\epsilon,[n]}\underset{\epsilon\to 0}\to y^{[n]},
\]
for all $n\ge 0$ and for all fixed $h>0$, where $y^{[n]}$ is given by the splitting scheme~\eqref{slpI}. 
As a consequence, the scheme~\eqref{APscheme} is an asymptotic preserving scheme, in the sense of~\cite{BRR}: the following diagram commutes
\[
\begin{CD}
y^{\epsilon,[N]}     @>{N\to\infty}>> y^\epsilon(T) \\
@VV{\epsilon\to 0}V        @VV{\epsilon\to 0}V\\
y^{[N]}     @>{\Delta t\to 0}>> y(T)
\end{CD}
\]
when the time step size is given by $h=T/N$. In other words:
\begin{itemize}
\item for each fixed $\epsilon>0$, the scheme~\eqref{APscheme} is consistent with~\eqref{prob_eps} when $h\to 0$,
\item for each fixed $h>0$, the proposed scheme~\eqref{APscheme} converges to a limiting scheme when $\epsilon\to 0$, which is given here by 
the abstract splitting scheme~~\eqref{slpI},
\item the limiting scheme~\eqref{slpI} is consistent when $h\to 0$ with~\eqref{prob}, which is the limit when $\epsilon\to 0$ of~\eqref{prob_eps}.
\end{itemize}
We refer to the recent work~\cite{BRR} for a general analysis of asymptotic preserving schemes for stochastic differential equations. As explained in~\cite{BRR}, the construction of asymptotic preserving schemes for SDEs, in particular to obtain equations interpreted in the Stratonovich sense, may be subtle. Here we do not employ a predictor-corrector strategy as in~\cite{BRR} (which is used to get the Stratonovich interpretation instead of the It\^o one), 
since we directly use exact flows of the appropriate subsystems in the splitting procedure: in the present paper, the Stratonovich interpretation is obtained in a natural way.
% The asymptotic preserving property provided above is another justification for considering the stochastic Poisson integrator~\eqref{srbI}.

The property of being asymptotic preserving is a qualitative property of a numerical scheme. Let us now briefly discuss the behaviour of weak error estimates of the asymptotic preserving scheme~\eqref{APscheme} when $\epsilon$ is small. For each fixed $\epsilon>0$, it is expected that the proposed asymptotic preserving scheme~\eqref{APscheme} has a weak order of convergence equal to $1$ in general: for test functions $\phi\colon\R^d\to\R$ of class $\mathcal{C}^4$, one has
\[
\left|\mathbb E[\varphi(y^{\epsilon,[N]})]-\mathbb E[\varphi(y^\epsilon(T))]\right|\le \CC_\epsilon(T,\varphi)h,
\]
where $h=\frac{T}{N}$ and the real number $\CC_\epsilon(T,\varphi)$ may depend on $\epsilon$ and diverge when $\epsilon\to 0$. 
In order to have a computational cost independent of the parameter $\epsilon$, it would be desirable to establish that the proposed scheme is \emph{uniformly accurate}: one would need to prove error estimates of the type
\[
\underset{\epsilon\in(0,\epsilon_0)}\sup~\big|\mathbb E[\varphi(y^{\epsilon,[N]})]-\mathbb E[\varphi(y^\epsilon(T))]\big|\le \CC(T,\varphi)h^\alpha,
\]
which are uniform with respect to $\epsilon\in(0,\epsilon_0)$ (with arbitrary $\epsilon_0>0$), in other words $\CC(T,\varphi)$ is independent of $\epsilon$. 
Observe that a reduction of the order of convergence, namely $\alpha<1$, may happen. 
Proving the uniform accuracy property of the scheme~\eqref{APscheme} is beyond the scope of this work. 
However, in the numerical experiments reported below, we investigate whether such uniform weak error estimates hold for the considered problems.

\begin{remark}
It is possible to define a variant of the asymptotic preserving scheme~\eqref{APscheme}, using a midpoint approximation for the the Ornstein--Ulenbeck components:
\[
\xi_k^{\epsilon,[n]}=\xi_{k}^{\epsilon,[n-1]}-\frac{h}{2\epsilon^2}\left(\xi_k^{\epsilon,[n-1]}+\xi_{k}^{\epsilon,[n]}\right)+\frac{\Delta_n W_k}{\epsilon},
\]
in which case the definition of $y^{\epsilon,[n]}$ needs to be modified as follows:
\begin{align*}
y^{\epsilon,[n]}&=\exp(hY_{H_p})\circ\exp(hY_{H_{p-1}})\circ\ldots\circ\exp(hY_{H_1}) \nonumber\\
&\circ\exp\left(\frac{h(\xi_k^{\epsilon,[n-1]}+\xi_{k}^{\epsilon,[n]})}{2\epsilon}Y_{\widehat H_m}\right)\circ\ldots\circ\exp\left(\frac{h(\xi_k^{\epsilon,[n-1]}+\xi_{k}^{\epsilon,[n]})}{2\epsilon}Y_{\widehat H_1})(y^{\epsilon,[n-1]}\right).
\end{align*}
That scheme is also asymptotic preserving.
\end{remark}

\section{Numerical experiments}\label{sect-numexp}

In this section, we illustrate the behaviour of the stochastic Poisson integrators which have been proposed and analysed in Section~\ref{sec:integrators}. 
We choose to present numerical experiments for the three examples of stochastic Lie--Poisson systems introduced in Section~\ref{sec:poisson}.  
%In this section, we present several applications of the above discussed general strategy to design explicit stochastic Poisson integrators 
%for the three examples of stochastic Lie--Poisson systems introduced in Section~\ref{sec:poisson}. 
On the one hand, we illustrate the qualitative properties of the proposed splitting stochastic Poisson integrators, compared with standard methods, 
by considering the temporal evolution of Casimir functions. On the other hand, we investigate and state strong and weak orders of convergence (which are consequences of Theorem~\ref{thm-general}), 
and we illustrate the quantitative error estimates obtained above. In addition, we illustrate the asymptotic preserving property for the multiscale versions of the considered systems. Note that, in general, the theoretical convergence results cannot be 
applied to the stochastic Maxwell--Bloch system (Example~\ref{expl-MB}), 
since no Casimir functions with compact level sets is known for that example. 
However, the theoretical results can be applied to the stochastic rigid body system (Example~\ref{expl-SRB}) and to the stochastic sine--Euler system (Example~\ref{expl-SE}). 

\subsection{Explicit stochastic Poisson integrators for stochastic Maxwell--Bloch equations}\label{ssec:PMB} 

This subsection presents explicit stochastic Poisson integrators for the stochastic Maxwell--Bloch system~\eqref{smb} (Example~\ref{expl-MB}). We first give a detailed construction of the splitting scheme, which gives a stochastic Poisson integrator. We then illustrate its qualitative properties (preservation of the Casimir function) and strong and weak error estimates of the proposed scheme by numerical experiments. Finally, we illustrate the asymptotic preserving property (Section~\ref{APsplit}) for a multiscale version of the system.

\subsubsection{Presentation of the splitting scheme for the stochastic Maxwell--Bloch system}

Recall that the stochastic Maxwell--Bloch system~\eqref{smb} introduced in Example~\ref{expl-MB} is of the type
\begin{equation*}
\diff y=B(y)\left(\nabla H(y)\,\diff t+\sigma_1\nabla \widehat H_1(y)\circ\,\diff W_1(t)+\sigma_3\nabla \widehat H_3(y)\circ\,\diff W_3(t) \right).
\end{equation*}

To apply the strategy described in Section~\ref{ssec:LP} and construct explicit stochastic Poisson integrators, we follow the approach from~\cite{MR1702129} for the deterministic Maxwell--Bloch system. The Hamiltonian function $H$ is split as $H=H_1+H_3$, with $H_1(y)=\widehat H_1(y)=\frac12y_1^2$ and $H_3(y)=\widehat H_3(y)=y_3$. The two associated  deterministic subsystems can be solved exactly as follows. 
On the one hand, the deterministic subsystem corresponding with the vector field $Y_{H_1}=B\nabla H_1$ is given by
\begin{equation*}
\left\lbrace
\begin{aligned}
\dot y_1&=0\\
\dot y_2&=y_3y_1\\
\dot y_3&=-y_2y_1.
\end{aligned}
\right.
\end{equation*}
Observe that $y_1$ may be considered as a constant and thus $(y_2,y_3)$ is solution of a linear ordinary differential equation (it is the standard harmonic oscillator): the exact solution of the first subsystem is thus given by
\begin{equation*}
%\begin{pmatrix}y_1(t)\\y_2(t)\\y_3(t)\end{pmatrix}
\exp(tY_{H_1})y(0)
=
\begin{pmatrix}1 & 0 & 0\\ 0 & \cos(y_1(0)t) & \sin(y_1(0)t)\\ 0 & -\sin(y_1(0)t) & \cos(y_1(0)t)\end{pmatrix}y(0)
\end{equation*}
for all $t\in\mathbb{R}$ and $y(0)\in\mathbb{R}^3$.

On the other hand, the deterministic subsystem corresponding with the vector field $Y_{H_3}=B\nabla H_3$ is given by
\begin{equation*}
\left\lbrace
\begin{aligned}
\dot y_1&=y_2\\
\dot y_2&=0\\
\dot y_3&=0.
\end{aligned}
\right.
\end{equation*}
The exact solution of the second subsystem is thus given by
\begin{equation*}
%\begin{pmatrix}y_1(t)\\y_2(t)\\y_3(t)\end{pmatrix}
\exp(tY_{H_3})y(0)
=
\begin{pmatrix}1 & t & 0\\ 0 & 1 & 0\\ 0 & 0 & 1\end{pmatrix}y(0)
\end{equation*}
for all $t\in\mathbb{R}$ and $y(0)\in\mathbb{R}^3$.

In the case of the stochastic Maxwell--Bloch system~\eqref{smb}, the splitting integrator~\eqref{slpI} then reads
\begin{equation}\label{smbI}
\Phi_h=\exp(hY_{H_3})\circ\exp(hY_{H_1})\circ\exp(\sigma_3\Delta W_3Y_{\widehat H_3})\circ\exp(\sigma_1\Delta W_1Y_{\widehat H_1}),
\end{equation} 
where for all $y\in\R^3$ one has
\[
\exp(\sigma_1\Delta W_1Y_{\widehat H_1})y
=
\begin{pmatrix}1 & 0 & 0\\ 0 & \cos(y_1\sigma_1\Delta W_1) & \sin(y_1\sigma_1\Delta W_1)\\ 0 & -\sin(y_1\sigma_1\Delta W_1) & \cos(y_1\sigma_1\Delta W_1)\end{pmatrix}y
\]
and
\[
\exp(\sigma_3\Delta W_3Y_{\widehat H_3})y=\begin{pmatrix}1 & \sigma_3\Delta W_3 & 0\\ 0 & 1 & 0\\ 0 & 0 & 1\end{pmatrix}y.
\]
Owing to Proposition~\ref{propo:sPi}, the explicit splitting scheme~\eqref{smbI} is a stochastic Poisson integrator.

\subsubsection{Preservation of the Casimir of the stochastic Maxwell--Bloch system}

Let us first illustrate the qualitative behaviour of the stochastic Poisson integrator~\eqref{smbI} introduced above. Figure~\ref{fig:CasMaxBloc} illustrates the preservation of the Casimir $C(y)=\frac{1}{2}(y_2^2+y_3^2)$ by the stochastic Poisson integrator~\eqref{smbI}. In this numerical experiment, the initial value is $y(0)=(1,2,3)$ and the final time is $T=1$. We consider the two cases where the system~\eqref{smb} is driven by a single Wiener process: $(\sigma_1,\sigma_3)=(1,0)$ and $(\sigma_1,\sigma_3)=(0,1)$. Similar results would be obtained if the system was driven by two independent Wiener processes ($\sigma_1=\sigma_3=1$ for instance). In Figure~\ref{fig:CasMaxBloc}, we compare the numerical solutions given by the classical Euler--Maruyama scheme (applied to the It\^o formulation of the system), the stochastic midpoint scheme from~\cite{Milstein2002a}, and the explicit splitting scheme~\eqref{smbI}. The time step size is equal to $h=0.01$. To implement the implicit stochastic midpoint scheme, a truncation of the noise with threshold $A=\sqrt{4|\log(h)|}$ is applied (see~\cite{Milstein2002a} for details). To be able to compare the results for different schemes, we use this truncation in all experiments where the implicit stochastic midpoint scheme is involved. As shown in Proposition~\ref{propo:sPi}, we observe that the Casimir function $C(y)=\frac12(y_2^2+y_3^2)$ is preserved when using the stochastic Poisson integrator~\eqref{smbI}. The Casimir function is also preserved when using the stochastic midpoint scheme: indeed, this integrator is known to preserve quadratic invariants, see~\cite{MR3210739}.

\begin{figure}[h]
\centering
\includegraphics*[width=0.48\textwidth,keepaspectratio]{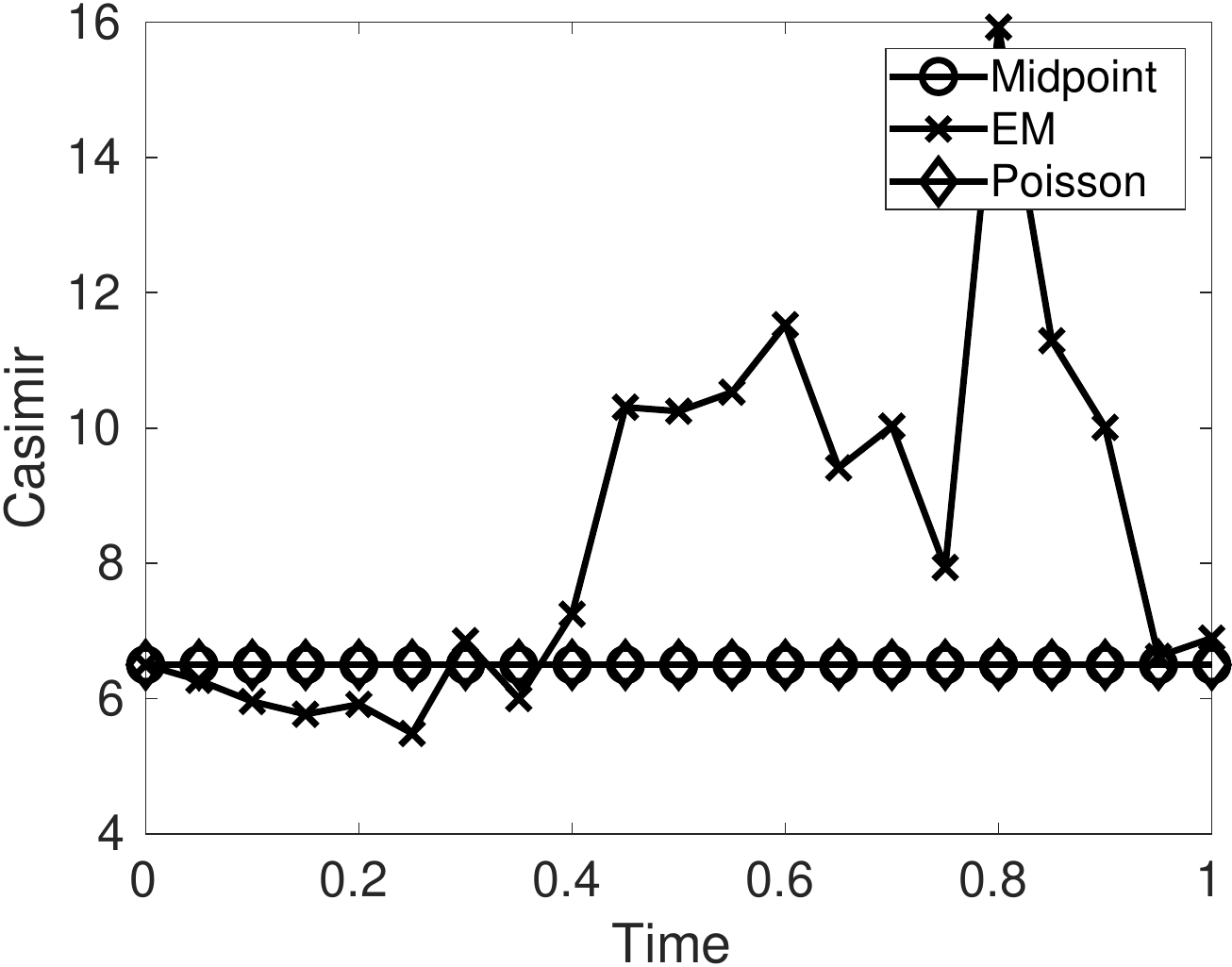}
\includegraphics*[width=0.48\textwidth,keepaspectratio]{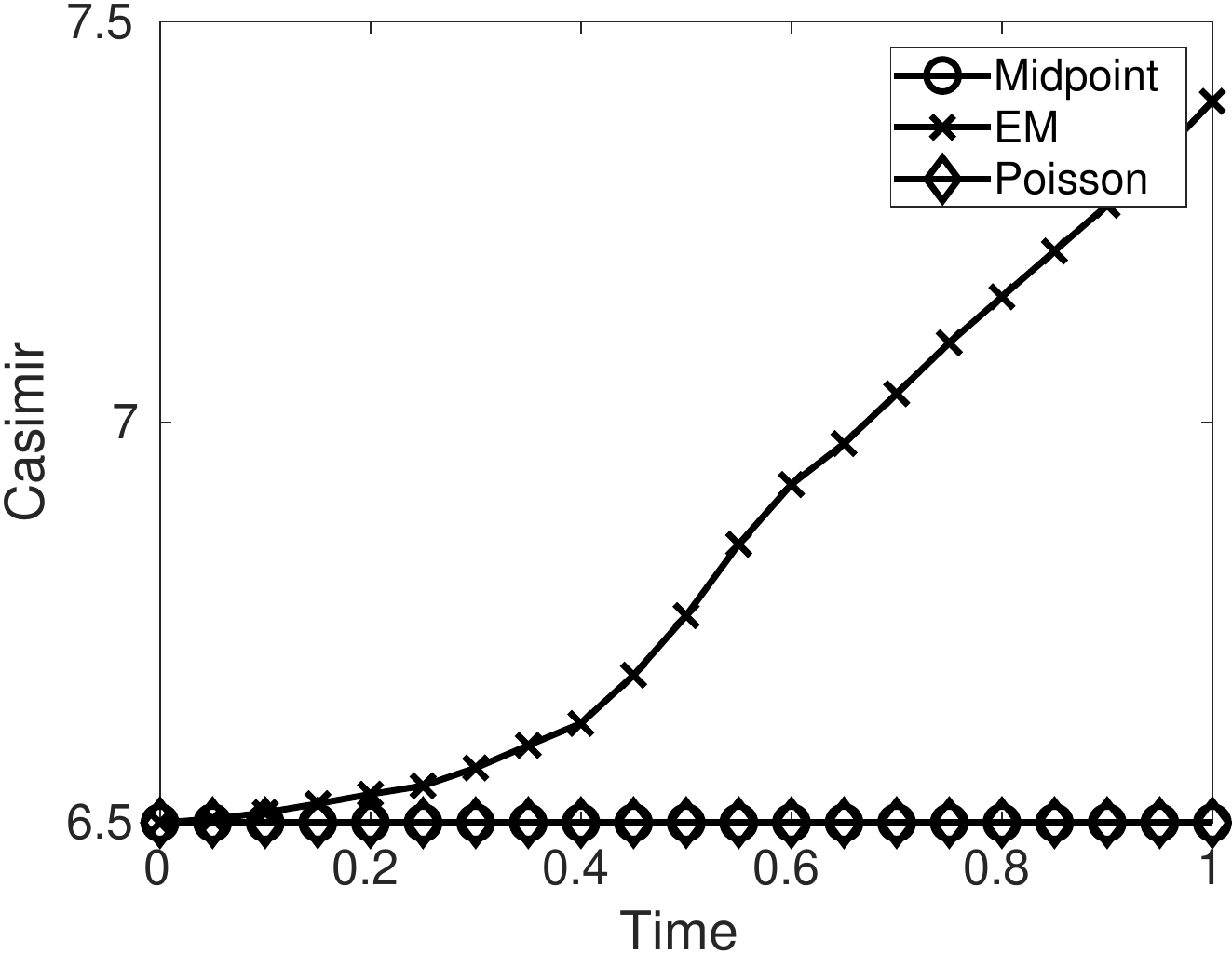}
\caption{Stochastic Maxwell--Bloch system: preservation of the Casimir by
the Euler--Maruyama scheme ($\times$), the midpoint scheme ($\circ$), 
and the explicit stochastic Poisson integrator ($\diamond$). Left: $(\sigma_1,\sigma_3)=(1,0)$. Right: $(\sigma_1,\sigma_3)=(0,1)$.}
\label{fig:CasMaxBloc}
\end{figure}

\subsubsection{Strong and weak convergence of the explicit stochastic Poisson integrator for the stochastic Maxwell--Bloch system}

The preservation of the Casimir $C$ is not sufficient to ensure almost sure boundedness of the numerical solution, which is instrumental to deduce Theorem~\ref{thm-general} from Lemma~\ref{lemm-aux}. As a consequence, we are not able to state a convergence result for the stochastic Poisson integrator~\eqref{smbI} in general. However, when $\sigma_3=0$, it is possible to show the following result.
\begin{proposition}\label{thm-smb}
Consider a numerical discretisation of the stochastic Maxwell--Bloch system~\eqref{smb} by the stochastic Poisson integrator~\eqref{smbI}. Assume that $\sigma_3=0$. Then, the strong order of convergence and the weak order of convergence of this scheme are equal to $1$.
\end{proposition}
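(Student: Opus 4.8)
The plan is to reduce the statement to the auxiliary convergence result Lemma~\ref{lemm-aux}, exactly as in the proof of Theorem~\ref{thm-general}. The only missing ingredient is an almost sure bound on the exact and numerical solutions, uniform over $[0,T]$ and over the step size $h$; such a bound cannot be obtained from Propositions~\ref{propo:global} and~\ref{propo:numerik} here, since the Casimir function $C(y)=\frac12(y_2^2+y_3^2)$ does not have compact level sets. The key observation is that the assumption $\sigma_3=0$ removes the noise from the first component: with $\widehat H_1(y)=\frac12 y_1^2$ one computes $B(y)\nabla\widehat H_1(y)=(0,\,y_1y_3,\,-y_1y_2)^T$, whose first entry vanishes. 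Hence the exact solution of~\eqref{smb} satisfies $\diff y_1=y_2\,\diff t$ with no stochastic term, while conservation of the Casimir forces $(y_2(t),y_3(t))$ to stay on the circle $y_2^2+y_3^2=2C(y_0)$.

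First I would derive the almost sure bound for the exact solution. From $y_2(t)^2+y_3(t)^2=2C(y_0)$ one gets $|y_2(t)|\le\sqrt{2C(y_0)}$ almost surely, and integrating $\diff y_1=y_2\,\diff t$ yields $|y_1(t)|\le|y_1(0)|+t\sqrt{2C(y_0)}$. Therefore $\sup_{0\le t\le T}\norm{y(t)}\le R_T(y_0)$ for a deterministic constant $R_T(y_0)$ depending only on $y_0$ and $T$.

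Next I would prove the same bound for the scheme~\eqref{smbI}, which for $\sigma_3=0$ reduces to $\Phi_h=\exp(hY_{H_3})\circ\exp(hY_{H_1})\circ\exp(\sigma_1\Delta W_1 Y_{\widehat H_1})$ and involves a single Wiener process. Inspecting the three maps, the two rotations $\exp(\sigma_1\Delta W_1 Y_{\widehat H_1})$ and $\exp(hY_{H_1})$ act only in the $(y_2,y_3)$-plane and preserve both $y_1$ and $y_2^2+y_3^2$, whereas the shear $\exp(hY_{H_3})$ sends $y_1\mapsto y_1+hy_2$ and leaves $(y_2,y_3)$ unchanged. Consequently $(y_2^{[n]})^2+(y_3^{[n]})^2=2C(y_0)$ for all $n$, and $|y_1^{[n]}|\le|y_1^{[0]}|+nh\sqrt{2C(y_0)}\le|y_1^{[0]}|+T\sqrt{2C(y_0)}$ whenever $nh\le T$; the same estimates control the intermediate stages of each step. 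Thus the numerical solution satisfies the same deterministic bound $R_T(y_0)$, uniformly in $h$ and $n$.

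With both bounds at hand, I would conclude by the truncation argument already used for Theorem~\ref{thm-general}: setting $R=R_T(y_0)+1$ and replacing $H_1,H_3,\widehat H_1$ by compactly supported, smooth modifications that agree with them on the ball of radius $R_T(y_0)$ produces globally Lipschitz coefficients with bounded derivatives of all orders, for which the exact and numerical solutions coincide with the original ones on $[0,T]$. Lemma~\ref{lemm-aux} then applies directly; since the effective system is driven by a single Wiener process ($m=1$), the commutative noise condition holds automatically and gives strong order $1$, while the weak part of the lemma gives weak order $1$. The main obstacle is exactly this almost sure boundedness step: it does not rest on a Casimir with compact level sets but on the decoupling of $y_1$ from the noise when $\sigma_3=0$, and once the deterministic bound is secured the remainder is a routine application of the machinery already developed.
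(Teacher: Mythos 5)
Your proof is correct and follows the same overall strategy as the paper: establish an almost sure bound on the exact and numerical solutions that is uniform over $[0,T]$ and over $h$, then truncate the coefficients and invoke Lemma~\ref{lemm-aux}, with the $m=1$ (commutative noise) case giving strong order $1$. Where you differ is in how the bound is obtained. The paper works with the full Euclidean norm: for the exact solution it computes the Poisson brackets of $\mathcal{H}(y)=\norm{y}^2$ with $H_1$, $\widehat H_1$ and $H_3$, observes $\{\mathcal{H},H_3\}(y)=2y_1y_2\le\mathcal{H}(y)$, and applies Gronwall's lemma to get $\norm{y(t)}\le e^t\norm{y(0)}$; for the scheme it shows the two rotations are isometries and the shear satisfies $\norm{e^{hY_{H_3}}y}\le(1+h)\norm{y}$, giving $\norm{y^{[n]}}\le(1+h)^n\norm{y^{[0]}}$. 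You instead exploit the conservation of the Casimir to pin $(y_2,y_3)$ on the circle $y_2^2+y_3^2=2C(y_0)$ and use the fact that, for $\sigma_3=0$, the first component is noise-free ($\diff y_1=y_2\,\diff t$ for the exact solution, $y_1\mapsto y_1+hy_2$ for the scheme), which yields the additive bound $|y_1|\le|y_1(0)|+T\sqrt{2C(y_0)}$. Your bound is sharper (linear rather than exponential in $T$) and makes transparent exactly why the hypothesis $\sigma_3=0$ is needed, while the paper's Gronwall argument is the one that generalises more readily when the shear direction is also perturbed (indeed the paper reuses it to get the second-moment bound $\E[\norm{y^{[n]}}^2]\le e^{(1+\sigma_3^2/2)nh}\E[\norm{y^{[0]}}^2]$ for $\sigma_3>0$, where your pathwise argument breaks down). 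Both derivations feed into the identical reduction to Lemma~\ref{lemm-aux}, and your explicit remark that the intermediate stages of each splitting step obey the same bound is a point the truncation argument genuinely requires.
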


\begin{proof}
Let us prove that, when $\sigma_3=0$, the following  bounds are satisfied almost surely:
\begin{equation}\label{boundsMB}
\left\lbrace
\begin{aligned}
\|y^{[n]}\|&\le (1+h)^n\|y^{[0]}\|\\
\|y(t)\|&\le e^{t}\|y(0)\|
\end{aligned}
\right.
\end{equation}
for all $n\ge 0$, $h\in(0,h_0)$ and $t\ge 0$.

The proof of the bounds above is straightforward. On the one hand, for all $y\in\R^3$, all $h>0$ and $t\in\R$, one has
\[
\|e^{tY_{H_1}}y\|=\|e^{tY_{\widehat H_1}}y\|=\|y\|
\]
and
\begin{align*}
\|e^{hY_{H_3}}y\|^2&=(y_1+hy_2)^2+y_2^2+y_3^2=\|y\|^2+2hy_1y_2+h^2y_2^2\\
&\le (1+h)^2\|y\|^2.
\end{align*}
Therefore
\[
\|y^{[n]}\|\le \|e^{hY_{H_3}}\circ e^{hY_{H_1}}\circ e^{\sigma_1\Delta_n W_1 Y_{\widehat H_1}}(y^{[n-1]})
\|
\le (1+h)\|y^{[n-1]}\|
\]
thus $\|y^{[n]}\|\le (1+h)^n\|y^{[0]}\|$.

On the other hand, let $\mathcal{H}(y)=\|y\|^2$. Then one has $\{\mathcal{H},H_1\}=\{\mathcal{H},\widehat H_1\}=0$ and $\{\mathcal{H},H_3\}(y)=2y_1y_2\le \mathcal{H}(y)$ for all $y\in\R^3$ (recall that the Poisson bracket is defined by~\eqref{Poisson.bracket}). Using~\eqref{dH.Poisson.bracket}, one thus obtains $\diff \mathcal{H}(y(t))\le \mathcal{H}(y(t))$ for all $t\ge 0$ and the bound for the exact solution follows from Gronwall's lemma.

Let $T\in(0,\infty)$, one can then repeat the arguments used in the proof of Theorem~\ref{thm-general} as a corollary of Lemma~\ref{lemm-aux}, using the almost sure bounds
\[
\underset{t\in[0,T]}\sup~\norm{y(t)}\le R(y_0,T),\quad \underset{N\ge 1}\sup~\underset{0\le n\le N}\sup~\norm{y^{[n]}}\le R(y_0,T)
\]
with $R(y_0,T)=e^{T}\|y_0\|$. The details are omitted. Note that the strong order of convergence is equal to $1$ since the system is driven by a single Wiener process ($m=1$, the commutative noise case condition is satisfied). This concludes the proof of Proposition~\ref{thm-smb}.
\end{proof}

When $\sigma_3>0$, one can prove the following moment bound for the numerical solution:
\[
\E[\|y^{[n]}\|^2]\le e^{(1+\frac{\sigma_3^2}{2})nh}\E[\|y^{[0]}\|^2].
\]
This follows from the inequality
\begin{align*}
\E[\|e^{\sigma_3\Delta W_3Y_{H_3}}y\|^2]&=\E\bigl[(y_1+\sigma_3\Delta W_3y_2)^2+y_2^2+y_3^2\bigr]=\|y\|^2+\sigma_3^2h|y_2|^2\\
&\le (1+\sigma_3^2h)\|y\|^2
\end{align*}
and a recursion argument. A similar moment bound holds for the exact solution, however these moment bounds are not sufficient to prove a strong convergence result.

The objectives of this subsection are first to illustrate Proposition~\ref{thm-smb}, and second to investigate the behaviour of the strong and weak errors when the condition $\sigma_3=0$ is removed. Whether it is possible to prove strong and weak convergence estimates, or convergence in probability results, for this problem in the general case is left open for future works.

We first illustrate the convergence of the strong error. In this numerical experiment, the initial value is $y(0)=(1,2,3)$ and the final time is $T=1$. We consider the two cases where the system~\eqref{smb} is driven by a single Wiener process: $(\sigma_1,\sigma_3)=(1,0)$ and $(\sigma_1,\sigma_3)=(0,1)$. Similar results would be obtained if the system was driven by two independent Wiener processes ($\sigma_1=\sigma_3=1$ for instance). The reference solution is computed using each scheme with time step size $h_{\text{ref}}=2^{-16}$, and the schemes are applied with the range of time step sizes $h=2^{-5},\ldots,2^{-13}$. The expectation is approximated averaging the error over $M_s=500$ independent Monte Carlo samples.

Like in Figure~\ref{fig:CasMaxBloc}, we compare the splitting integrator~\eqref{smbI} with the standard Euler--Maruyama scheme, and the stochastic midpoint scheme from \cite{Milstein2002a}. A truncation of the noise is used, see above for details. To be able to compare the results for different schemes, we use this truncation in all experiments where the implicit stochastic midpoint scheme is involved. 

For the cases where the system is driven by a single Wiener process ($\sigma_3=0$ or $\sigma_1=0$), the results of the numerical experiment are presented in Figure~\ref{fig:msMaxBloc}: we observe a strong order of convergence equal to $1$ for the proposed explicit stochastic Poisson integrator~\eqref{smbI}. This confirms the result of Proposition~\ref{thm-smb} when $\sigma_3=0$. We also conjecture that the stochastic Poisson integrator~\eqref{smbI} has strong order of convergence equal to $1$ when $\sigma_1=0$.

\begin{figure}[h]
\centering
\includegraphics*[width=0.48\textwidth,keepaspectratio]{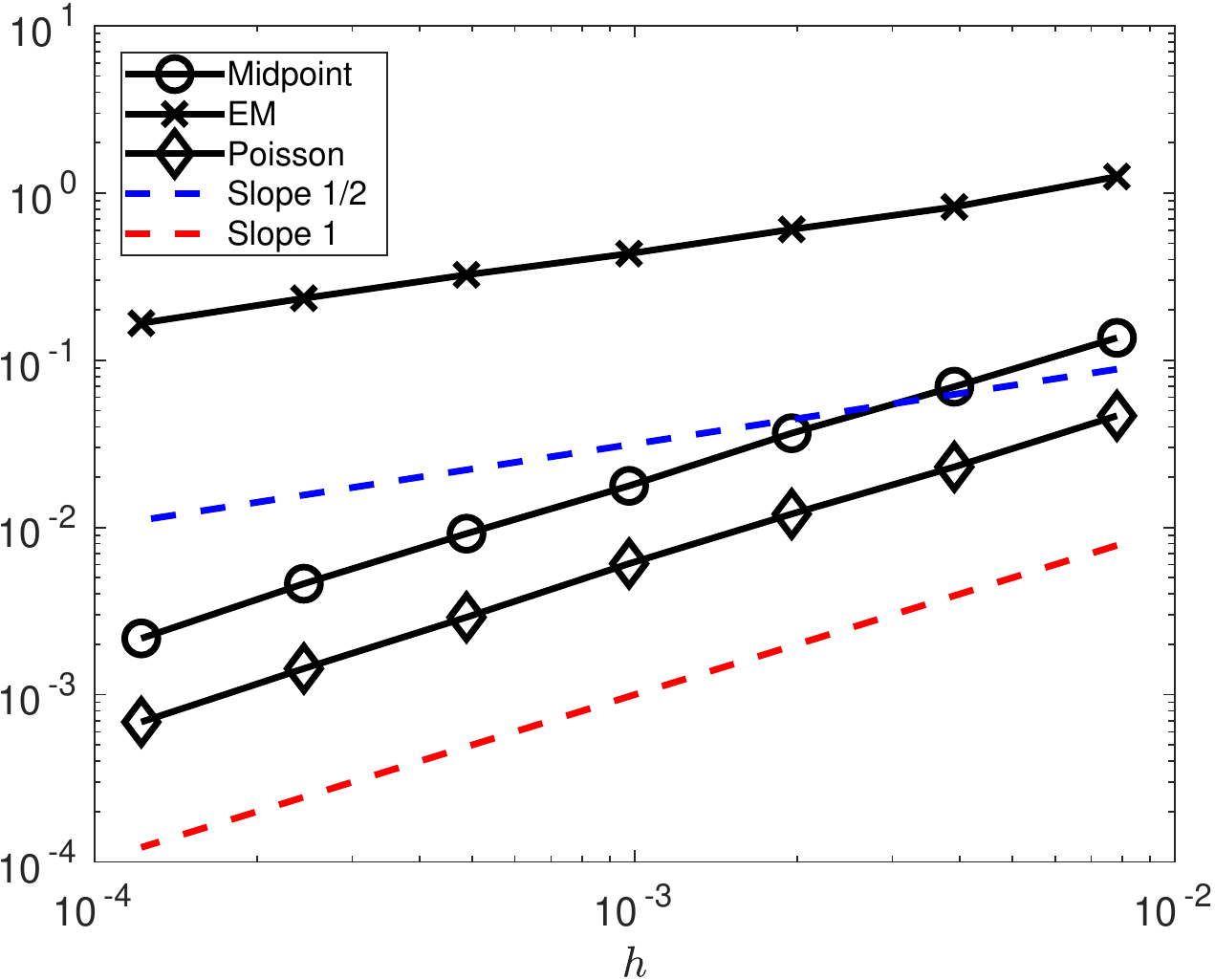}
\includegraphics*[width=0.48\textwidth,keepaspectratio]{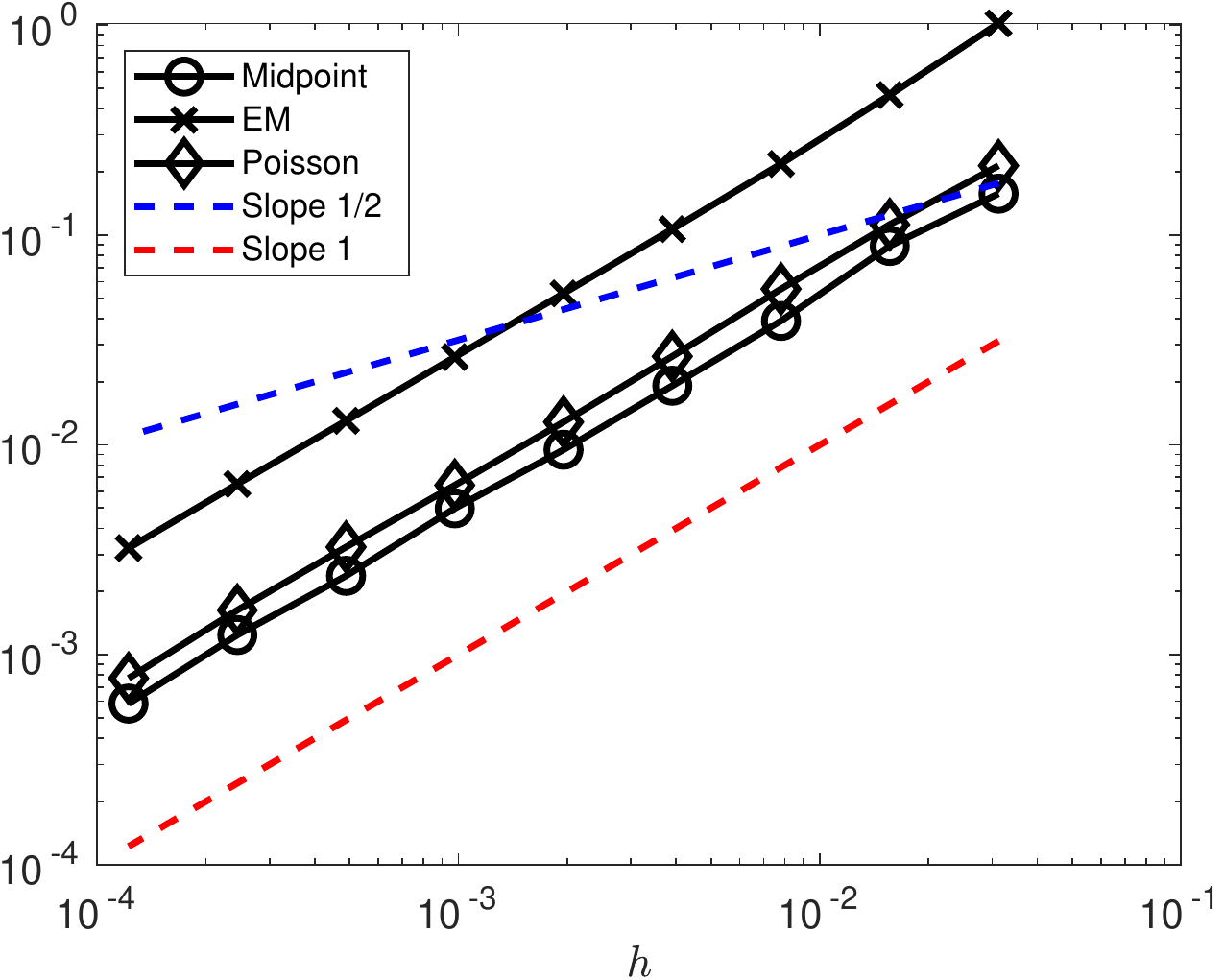}
\caption{Stochastic Maxwell--Bloch system with a single Wiener process: Strong errors of the Euler--Maruyama scheme ($\times$), the midpoint scheme ($\circ$), 
and the explicit stochastic Poisson integrator ($\diamond$). Left: $(\sigma_1,\sigma_3)=(1,0)$. Right: $(\sigma_1,\sigma_3)=(0,1)$.}
\label{fig:msMaxBloc}
\end{figure}

%For the case where the system is driven by two Wiener processes, the results of the numerical experiment are presented in Figure~\ref{fig:msMaxBloc2}, with $\sigma_1=\sigma_3=1$: we observe a mean-square order of convergence equal to $1/2$ for the proposed explicit stochastic Poisson integrator~\eqref{smbI}. This result is not covered by the theoretical analysis performed in this article, we conjecture that the mean-square order of the proposed integrator is $1/2$.

%\tobi{Is it really necessary to write ``mean-square order = $1/2$'' first as an observation and then as a conjecture?
%The conjecture is based on the observation, isn't it?}
For the case where the system is driven by two Wiener processes, the results of the numerical experiment are presented in Figure~\ref{fig:msMaxBloc2}, with $\sigma_1=\sigma_3=1$.
Based on the observed convergence behaviour, we conjecture that the strong order of the proposed integrator is $1/2$.
This result is not covered by the theoretical analysis performed in this article.

\begin{figure}[h]
\centering
\includegraphics*[width=0.48\textwidth,keepaspectratio]{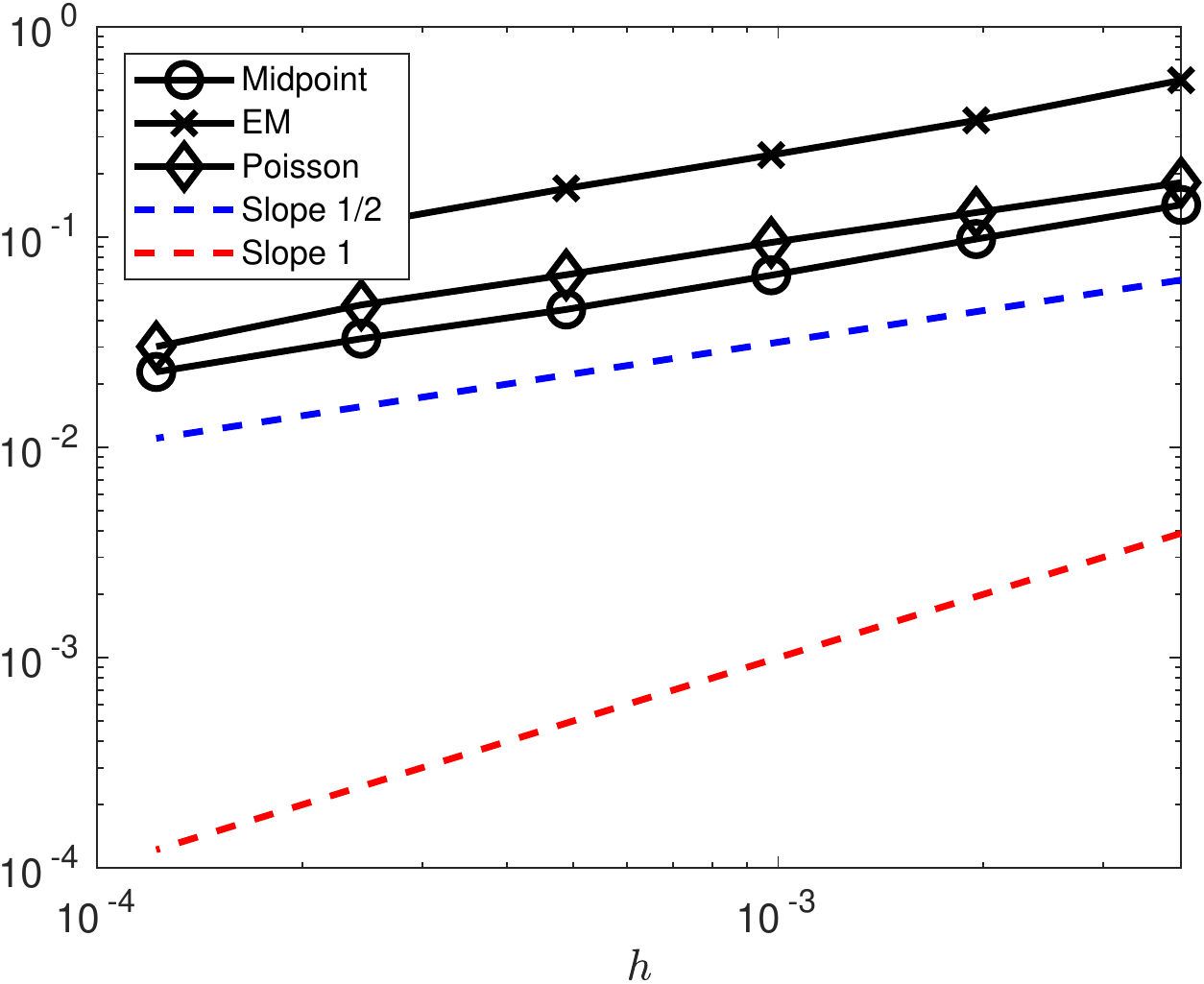}
\caption{Stochastic Maxwell--Bloch system with $(\sigma_1,\sigma_3)=(1,1)$: 
Strong errors of the Euler--Maruyama scheme ($\times$), the midpoint scheme ($\circ$), 
and the explicit stochastic Poisson integrator ($\diamond$).}
\label{fig:msMaxBloc2}
\end{figure}

We now illustrate the weak convergence of the stochastic Poisson integrator~\eqref{smbI}. For this numerical experiment, 
we set $\sigma_1=\sigma_3=1$, the initial value is $y(0)=(1,2,3)$ and the final time is $T=1$. The reference solution is computed using each scheme with time step size $h_{\text{ref}}=2^{-16}$, and the schemes are applied with the range of time step sizes $h=2^{-6},\ldots,2^{-12}$. The expectation is approximated averaging the error over $M_s=10^9$ independent Monte Carlo samples. Finally, the test function is given by $\phi(y)=\sin(2\pi y_1)+\sin(2\pi y_2)+\sin(2\pi y_3)$.

%The results are presented in Figure~\ref{fig:weakMaxBloc}. We observe a weak order $1$ for the proposed explicit stochastic Poisson integrator~\eqref{smbI}. This convergence result is not covered by the theoretical analysis performed in this article, we conjecture that the weak order of the proposed integrator is $1$.

The results are presented in Figure~\ref{fig:weakMaxBloc}. According to the observed rate of convergence, we conjecture that the weak order of the proposed integrator is $1$, but this result is not covered by the theoretical analysis performed in this article.

\begin{figure}[h]
\centering
\includegraphics*[width=0.48\textwidth,keepaspectratio]{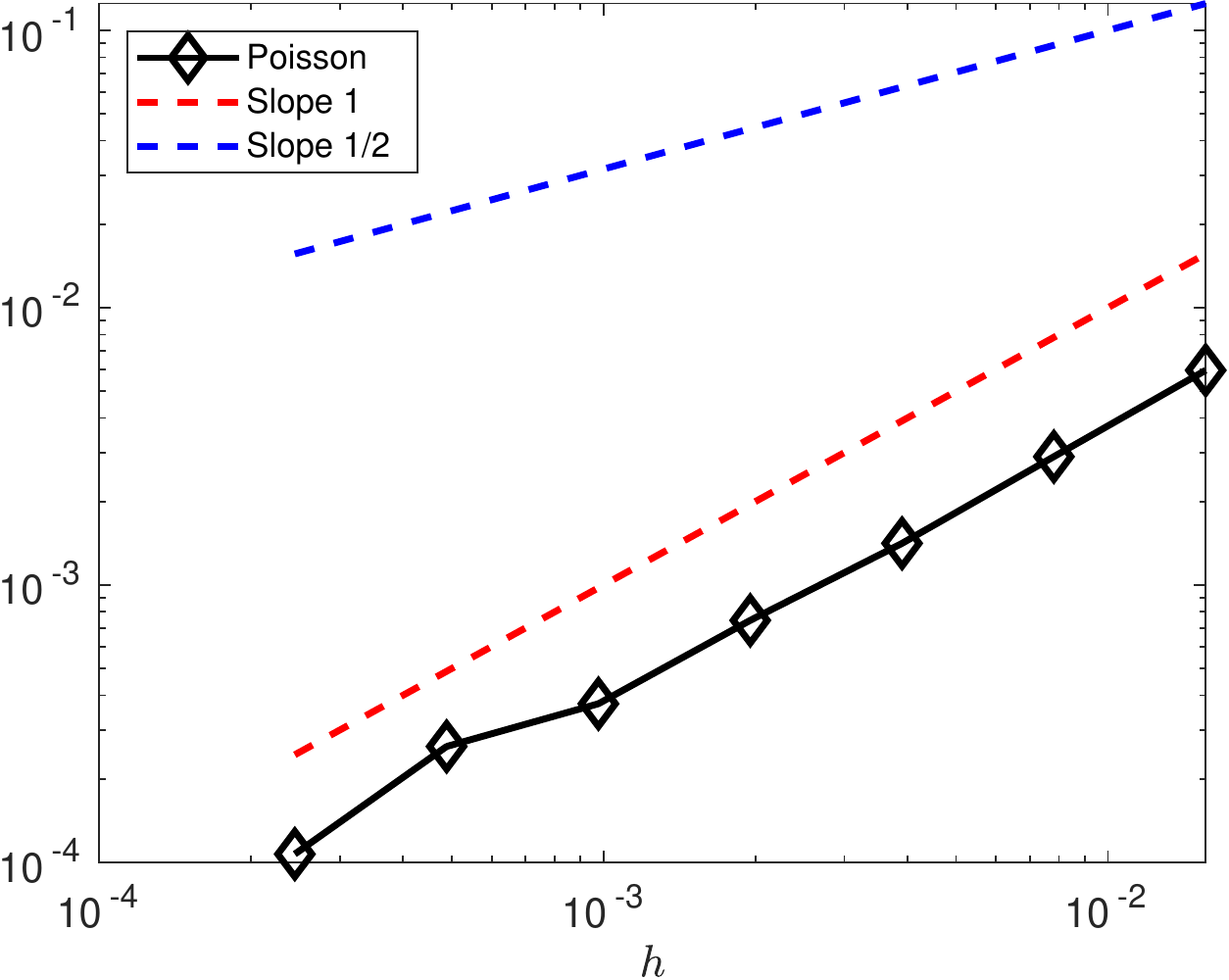}
\caption{Stochastic Maxwell--Bloch system:  Weak errors of the explicit stochastic Poisson integrator.} 
\label{fig:weakMaxBloc}
\end{figure}

Numerical experiments illustrating the behaviour of the weak error when the system is driven by a single noise are not reported: indeed, as seen in Figure~\ref{fig:msMaxBloc}, the stochastic Poisson integrator has strong order of convergence equal to $1$ if $\sigma_3=0$ (rigorous result, Proposition~\ref{thm-smb}) or if $\sigma_1=0$ (conjecture). In those cases, the weak error behaves like the strong error and the rate of convergence is $1$.

To conclude this subsection, let us provide a numerical experiment using the scheme~\eqref{eq:order2} of weak order $2$ presented in Remark~\ref{rem:order2}. 
For this numerical experiment, all the values of the parameters are the same as for Figure~\ref{fig:weakMaxBloc}, except $\sigma_1=\sigma_3=10^{-3}$. The results are presented on Figure~\ref{fig:weakMaxBloc2}. We observe that the weak convergence seems to be of order $2$ for the scheme~\eqref{eq:order2}, but for small values of $h$ the error saturates due to the Monte Carlo approximation. This is illustrated on the right figure, which gives results for different values of the Monte Carlo sample size.

\begin{figure}[h]
\centering
\includegraphics*[width=0.48\textwidth,keepaspectratio]{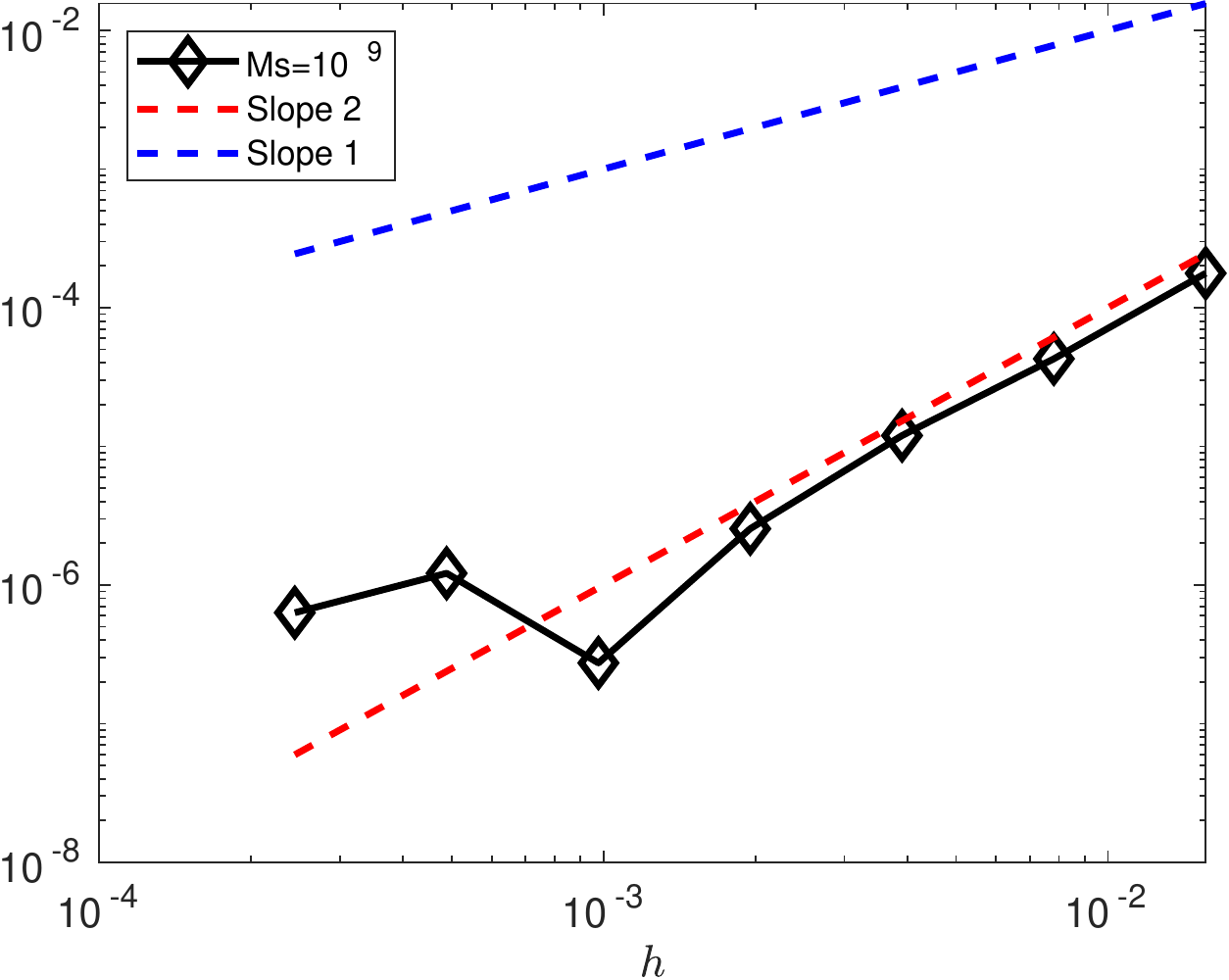}
\includegraphics*[width=0.48\textwidth,keepaspectratio]{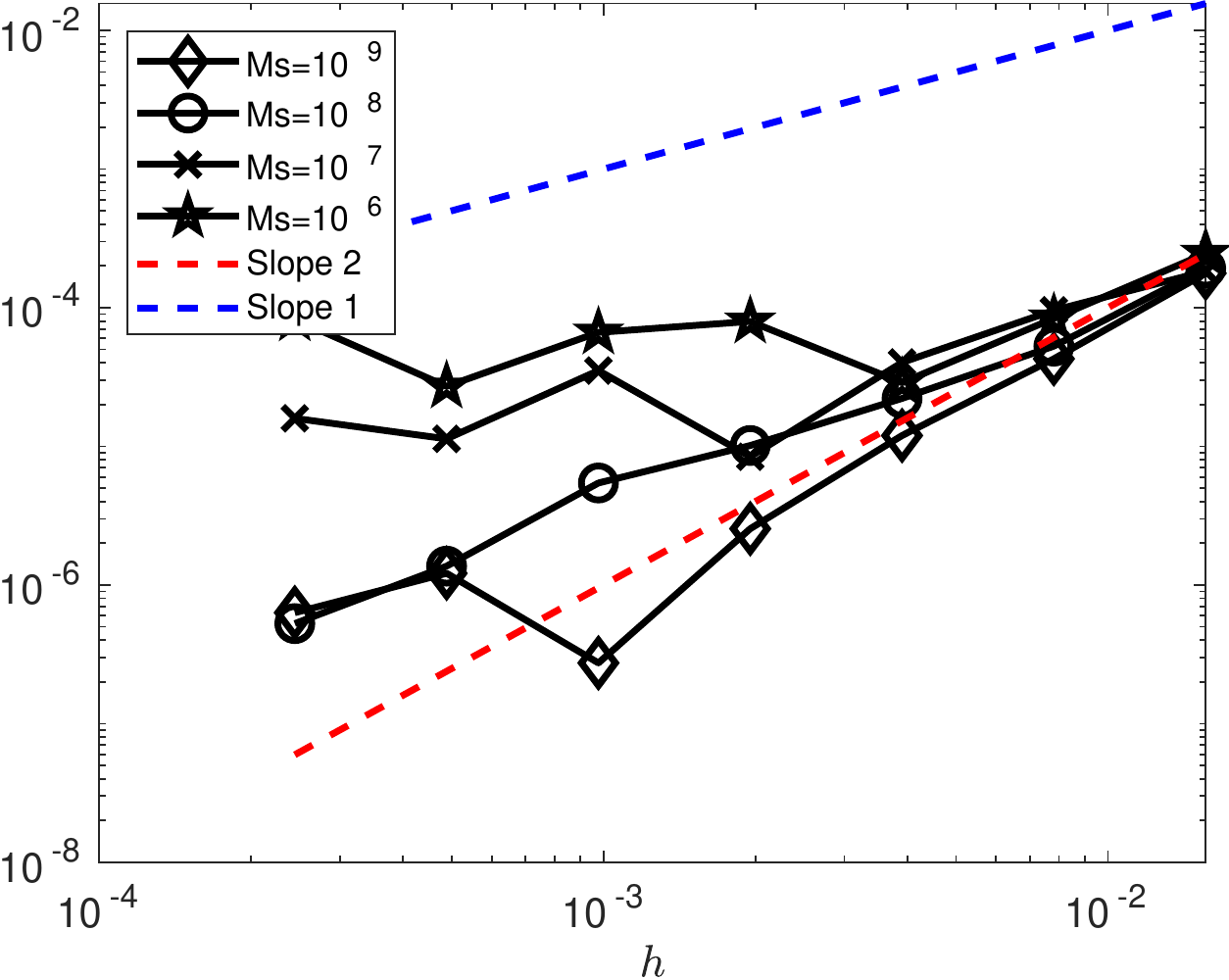}
\caption{Stochastic Maxwell--Bloch system: Weak errors for the weak order $2$ scheme~\eqref{eq:order2}. Left: $10^9$ Monte Carlo samples. Right: $10^6$ to $10^9$ Monte Carlo samples.} 
\label{fig:weakMaxBloc2}
\end{figure}

\subsubsection{Asymptotic preserving splitting scheme for the stochastic Maxwell--Bloch system}

In this subsection, we consider the multiscale version~\eqref{prob_eps}, parametrized by $\epsilon$, of the stochastic Maxwell--Bloch system. Based on the expression~\eqref{smbI} for the stochastic Poisson integrator, applying the general asymptotic preserving scheme~\eqref{APscheme} introduced in Section~\ref{APsplit} gives the scheme
\begin{equation}\label{smbIAP}
\left\lbrace
\begin{aligned}
y^{\epsilon,[n]}&=\exp(hY_{H_3})\circ\exp(hY_{H_1})\circ\exp(\frac{\sigma_3h\xi_{3}^{\epsilon,[n]}}{\epsilon}Y_{\widehat H_3})\circ\exp(\frac{\sigma_1h\xi_{1}^{\epsilon,[n]}}{\epsilon}Y_{\widehat H_1})(y^{\epsilon,[n-1]}),\\
\xi_k^{\epsilon,[n]}&=\xi_{k}^{\epsilon,[n-1]}-\frac{h}{\epsilon^2}\xi_{k}^{\epsilon,[n]}+\frac{\Delta_n W_k}{\epsilon}=\frac{1}{1+\frac{h}{\epsilon^2}}\Bigl(\xi_k^{\epsilon,[n-1]}+\frac{\Delta_n W_k}{\epsilon}\Bigr),\quad k=1,2.
\end{aligned}
\right.
\end{equation}
The initial values are $y^{\epsilon,[0]}=y^{[0]}=y(0)$ and $\xi_{k}^{\epsilon,[0]}=0$, $k=1,2$.

First, let us illustrate the qualitative behaviour of the scheme, for different values of $\epsilon$. For this numerical experiment, 
$\sigma_1=\sigma_3=0.1$, the initial value is $y(0)=(1,2,3)$ and the final time is $T=1$. The time step size is equal to $h=10^{-3}$. In Figure~\ref{fig:trajMBAPa}, we illustrate the preservation of the Casimir, up to an error of size $O(10^{-14})$, for the asymptotic preserving scheme applied with $\epsilon= 1,0.1,0.001$ (left) and for the stochastic Poisson integrator~\eqref{srbI}, formally $\epsilon=0$ (right). In Figure~\ref{fig:trajMBAPb}, we plot the evolution of the approximation of the trajectory $t_n\mapsto y(t_n)=(y_1(t_n),y_2(t_n),y_3(t_n))$, for different values of $\epsilon=1,0.1,0.001,0$. We observe that the trajectories are more regular when $\epsilon$ is large and converge to the solution of the stochastic Poisson integrator~\eqref{srbI} as $\epsilon$ tends to $0$.

\begin{figure}[h]
\begin{subfigure}{.5\textwidth}
\centering
\centering\includegraphics*[width=0.8\textwidth,keepaspectratio]{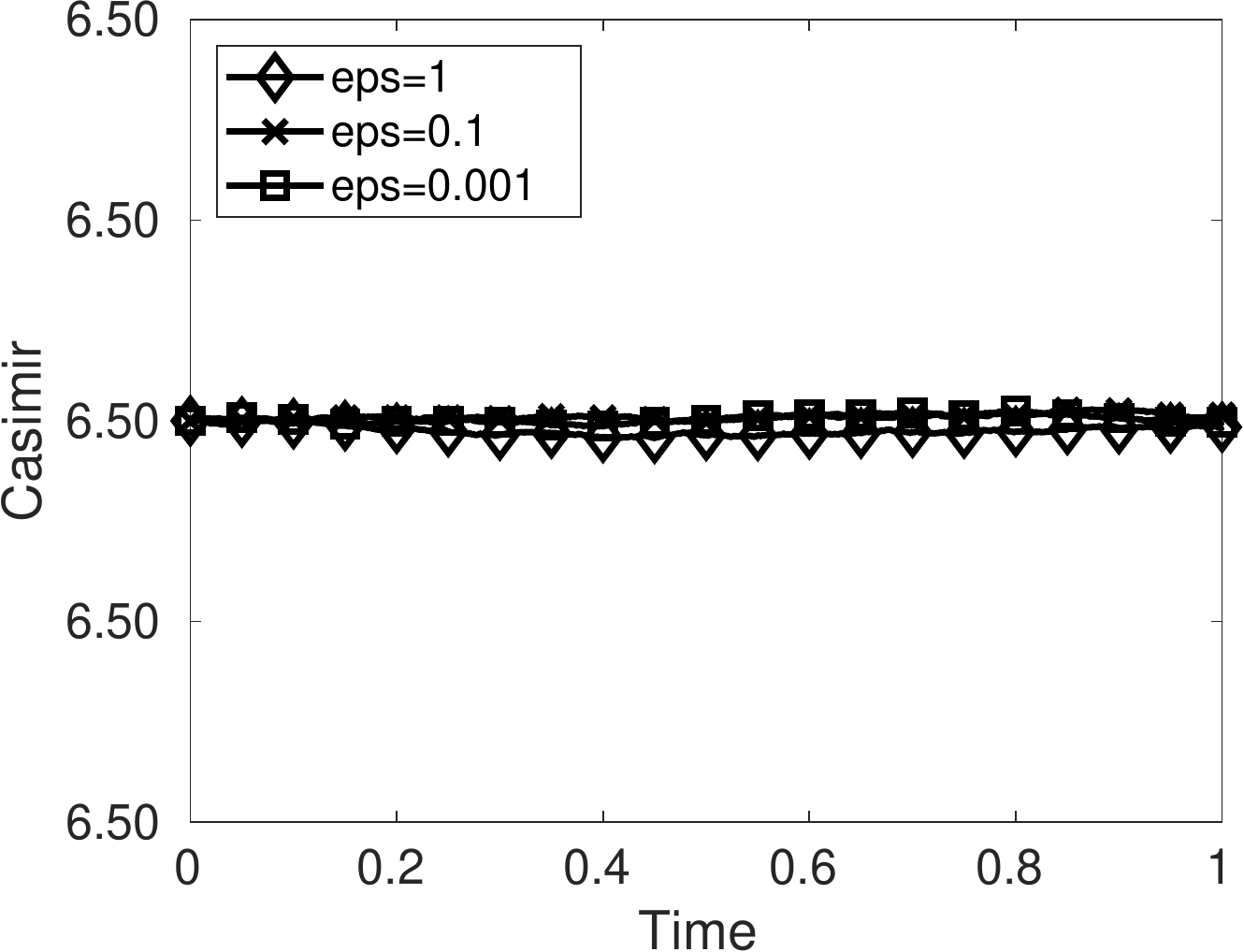}
\end{subfigure}
~
\begin{subfigure}{.5\textwidth}
\centering\includegraphics*[width=0.8\textwidth,keepaspectratio]{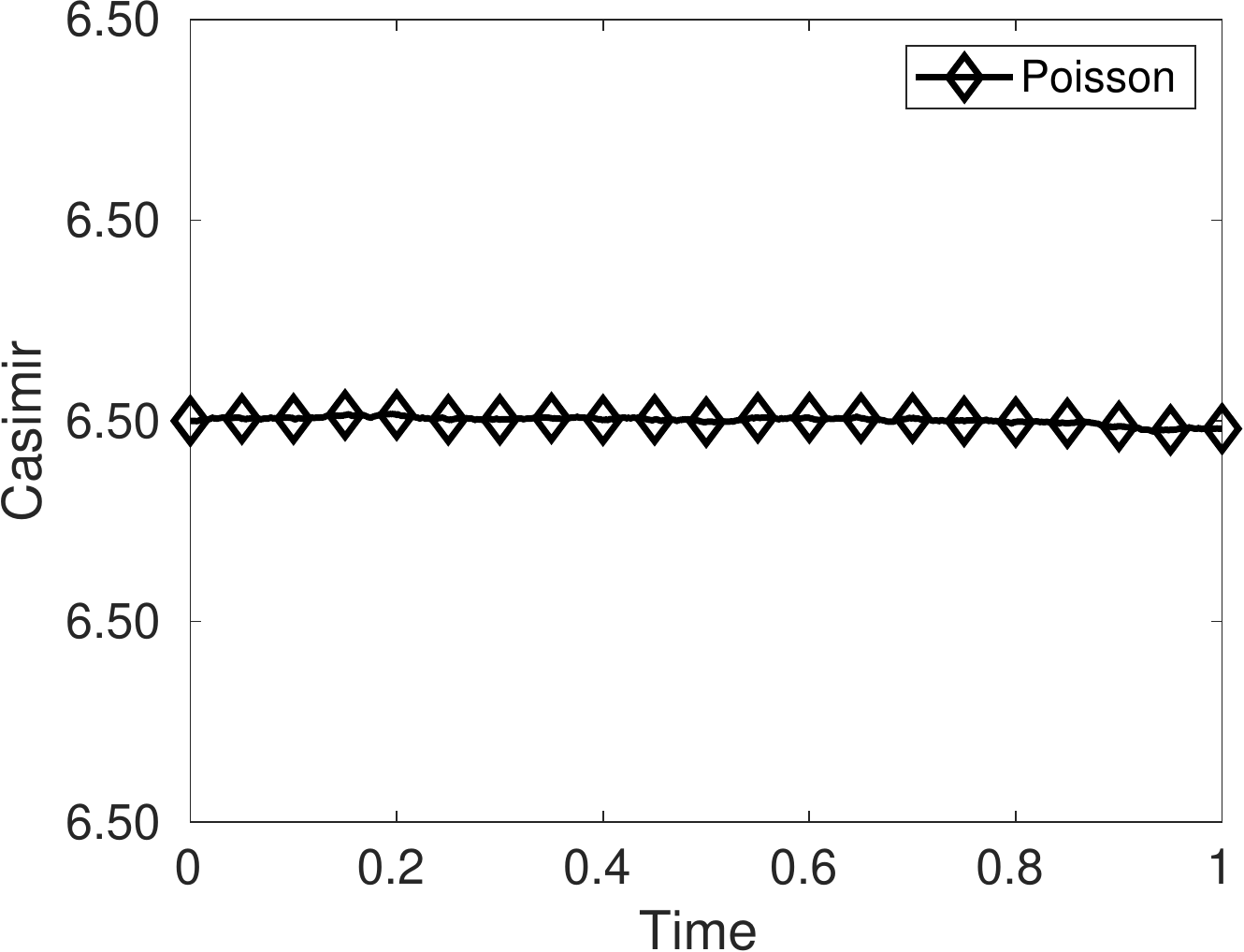}
\end{subfigure}
\caption{Stochastic Maxwell--Bloch with a single noise: evolution of the Casimir using the asymptotic preserving scheme~\eqref{smbIAP}. Left: $\epsilon=1,0.1,0.001$. Right: $\epsilon=0$.
}
\label{fig:trajMBAPa}
\end{figure}

\begin{figure}[h]
\centering
\begin{subfigure}{.5\textwidth}
\centering\includegraphics*[width=0.8\textwidth,keepaspectratio]{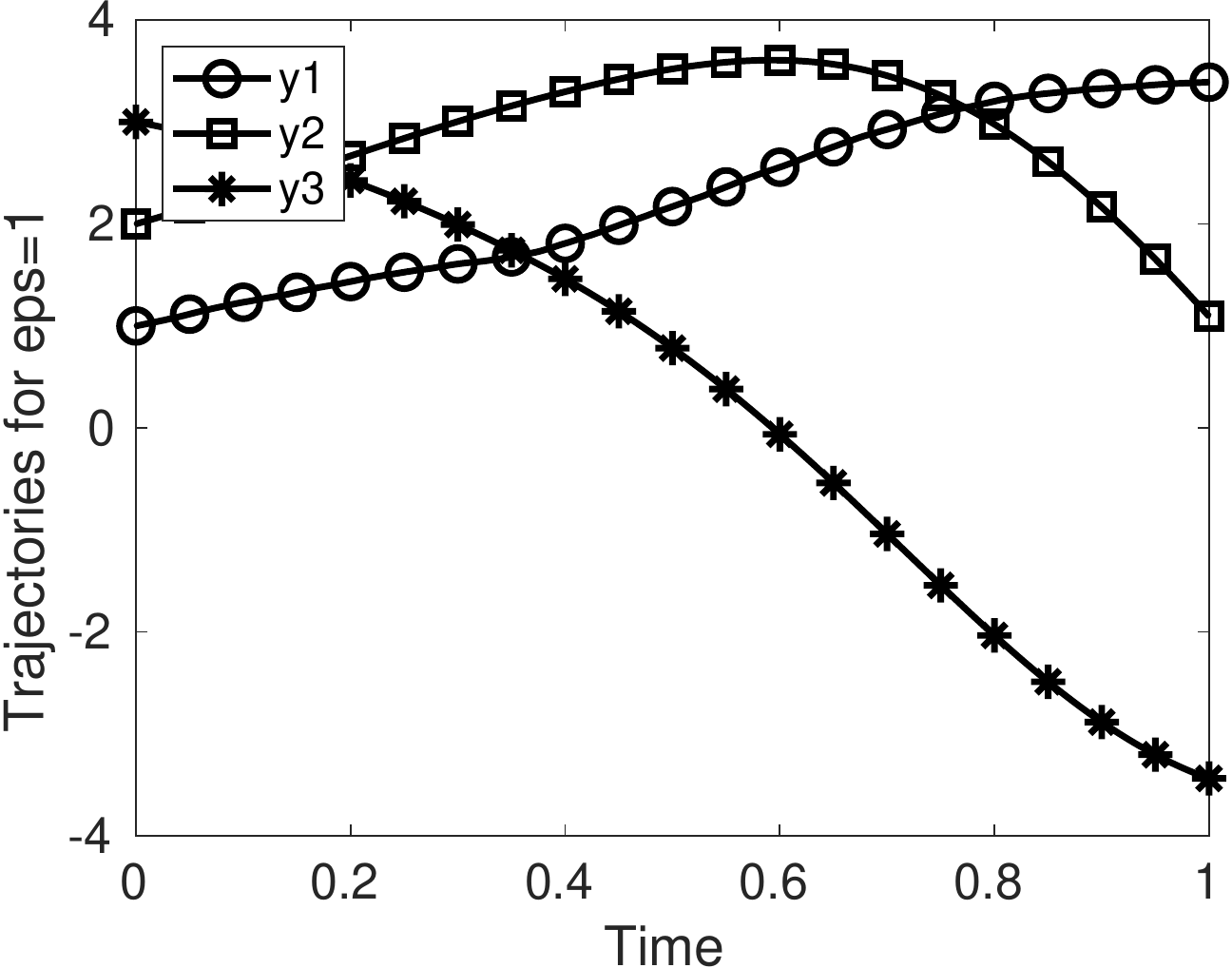}
\end{subfigure}
~
\begin{subfigure}{.5\textwidth}
\centering\includegraphics*[width=0.8\textwidth,keepaspectratio]{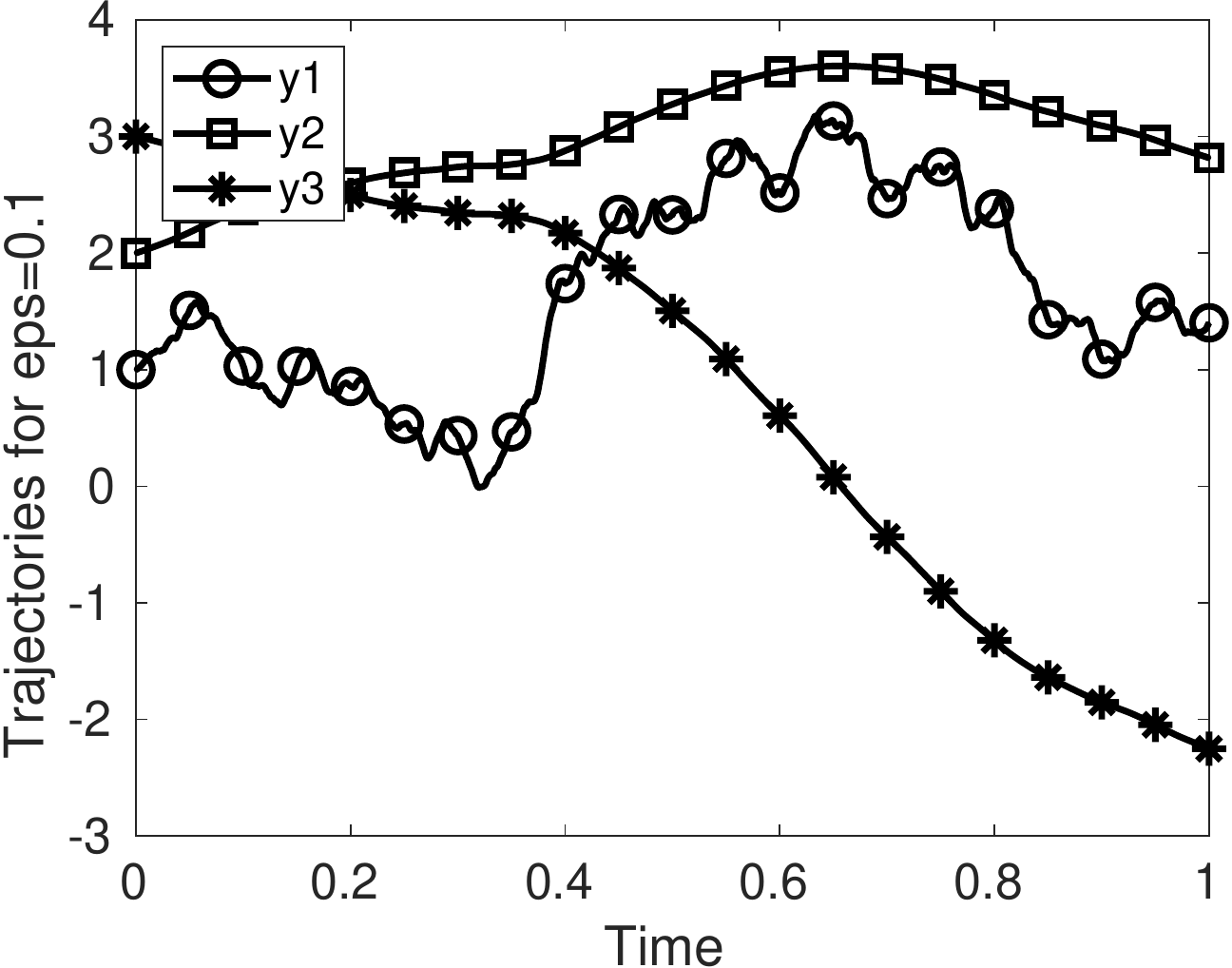}
\end{subfigure}
\newline
\begin{subfigure}{.5\textwidth}
\centering\includegraphics*[width=0.8\textwidth,keepaspectratio]{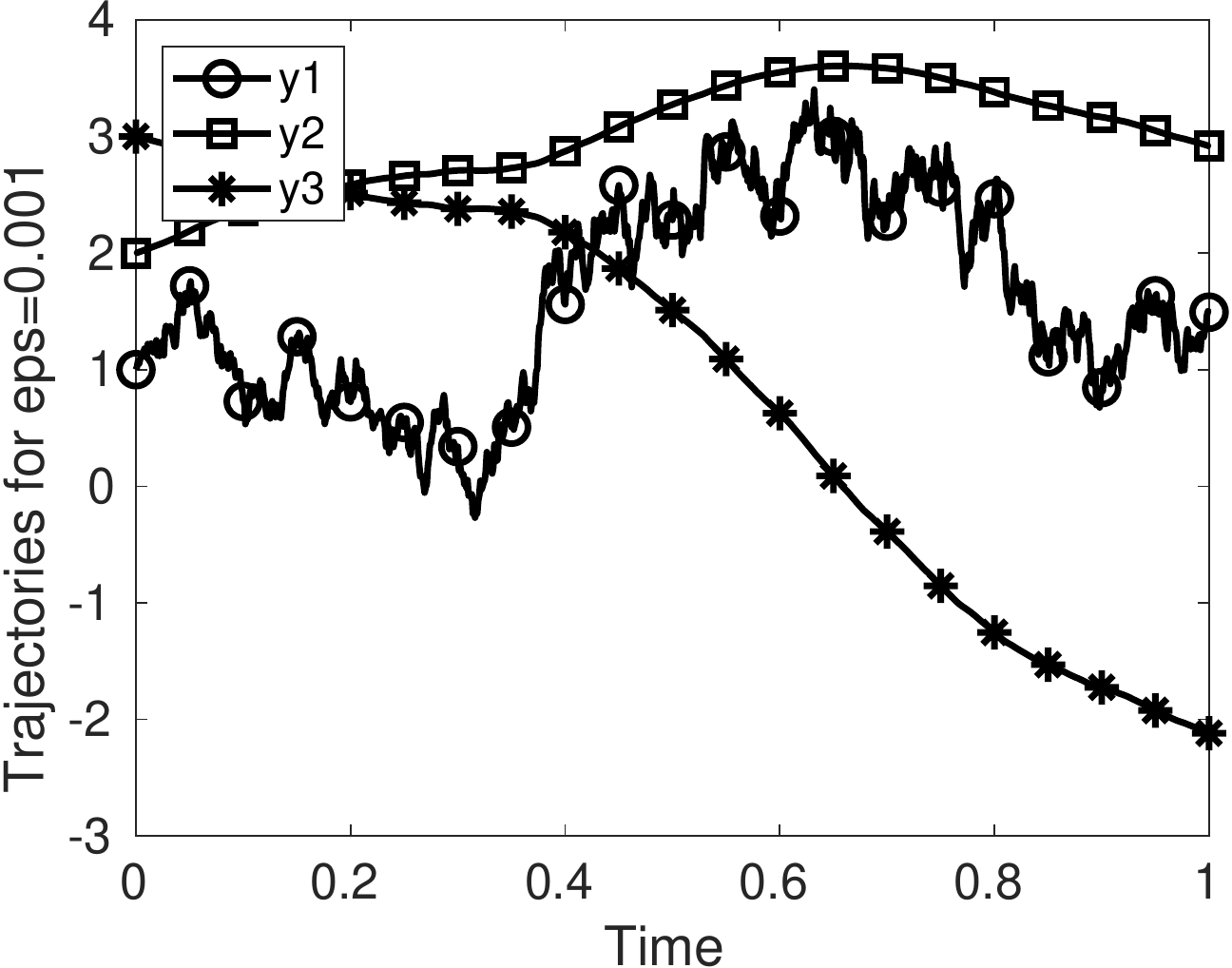}
\end{subfigure}
~
\begin{subfigure}{.5\textwidth}
\centering\includegraphics*[width=0.8\textwidth,keepaspectratio]{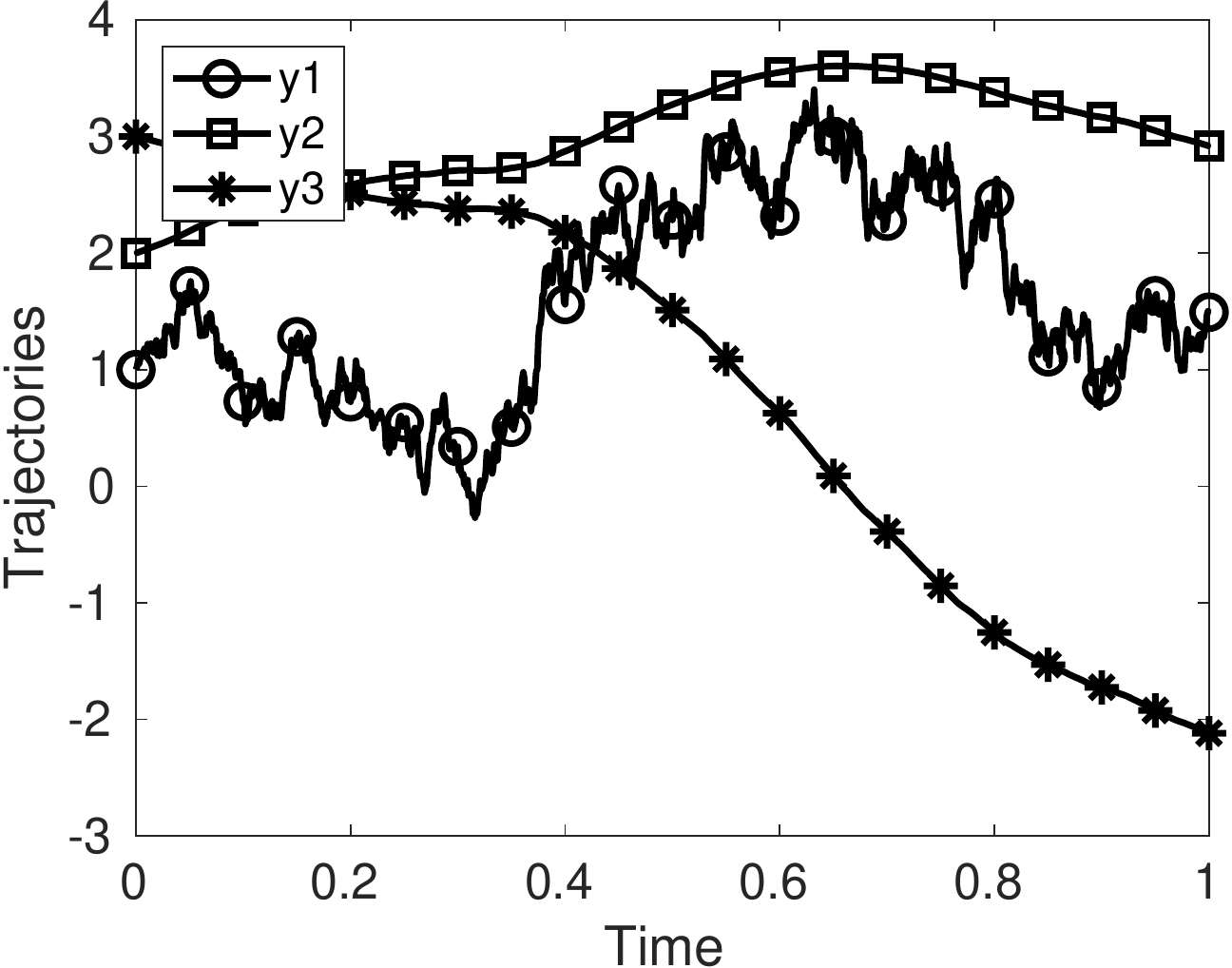}
\end{subfigure}
\caption{Stochastic Maxwell--Bloch system: trajectories of the numerical solution using the asymptotic preserving scheme~\eqref{smbIAP}. Top: left $\epsilon=1$, right $\epsilon=0.1$. Bottom: left $\epsilon=0.001$, right $\epsilon=0$.}
\label{fig:trajMBAPb}
\end{figure}

Finally, the last experiment of this subsection illustrates the uniform accuracy property of the splitting scheme~\eqref{smbIAP} with respect to the parameter $\epsilon$ in the weak sense. For this numerical experiment, $\sigma_1=\sigma_3=0.1$, the initial value is $y(0)=(1,2,3)$ and the final time is $T=1$. The reference solution is computed using each scheme with time step size $h_{\text{ref}}=2^{-16}$, and the schemes are applied with the range of time step sizes $h=2^{-6},\ldots,2^{-12}$. The expectation is approximated averaging the error over $M_s=10^9$ independent Monte Carlo samples. Finally, the test function is given by $\phi(y)=\sin(2\pi y_1)+\sin(2\pi y_2)+\sin(2\pi y_3)$. The parameter $\epsilon$ takes the following values: $\epsilon=10^{-2}, 10^{-3}, 10^{-4}, 10^{-5}$. The results are seen in Figure~\ref{fig:weakMaxBlocAP}. We observe that the weak error seems to be bounded uniformly with respect to $\epsilon$, with an order of convergence $1$. For a standard method such as the Euler--Maruyama scheme, the behaviour would be totally different: for fixed time step size $h$, the error is expected to be bounded away from $0$ when $\epsilon$ goes to $0$. Based on this numerical experiment, we conjecture that the asymptotic preserving scheme is uniformly accurate.

%
%Figure~\ref{fig:weakMaxBlocAP} illustrates the uniform accuracy property of the asymptotic preserving 
%splitting scheme with respect to the parameter $\epsilon$ 
%in the weak sense. The parameters are the same as in Figure~\ref{fig:weakMaxBloc} with 
%$\epsilon=10^{-2}, 10^{-3}, 10^{-4}, 10^{-5}$. \ceb{CE: it is for two Wiener processes.}

\begin{figure}[h]
\centering
\includegraphics*[width=0.48\textwidth,keepaspectratio]{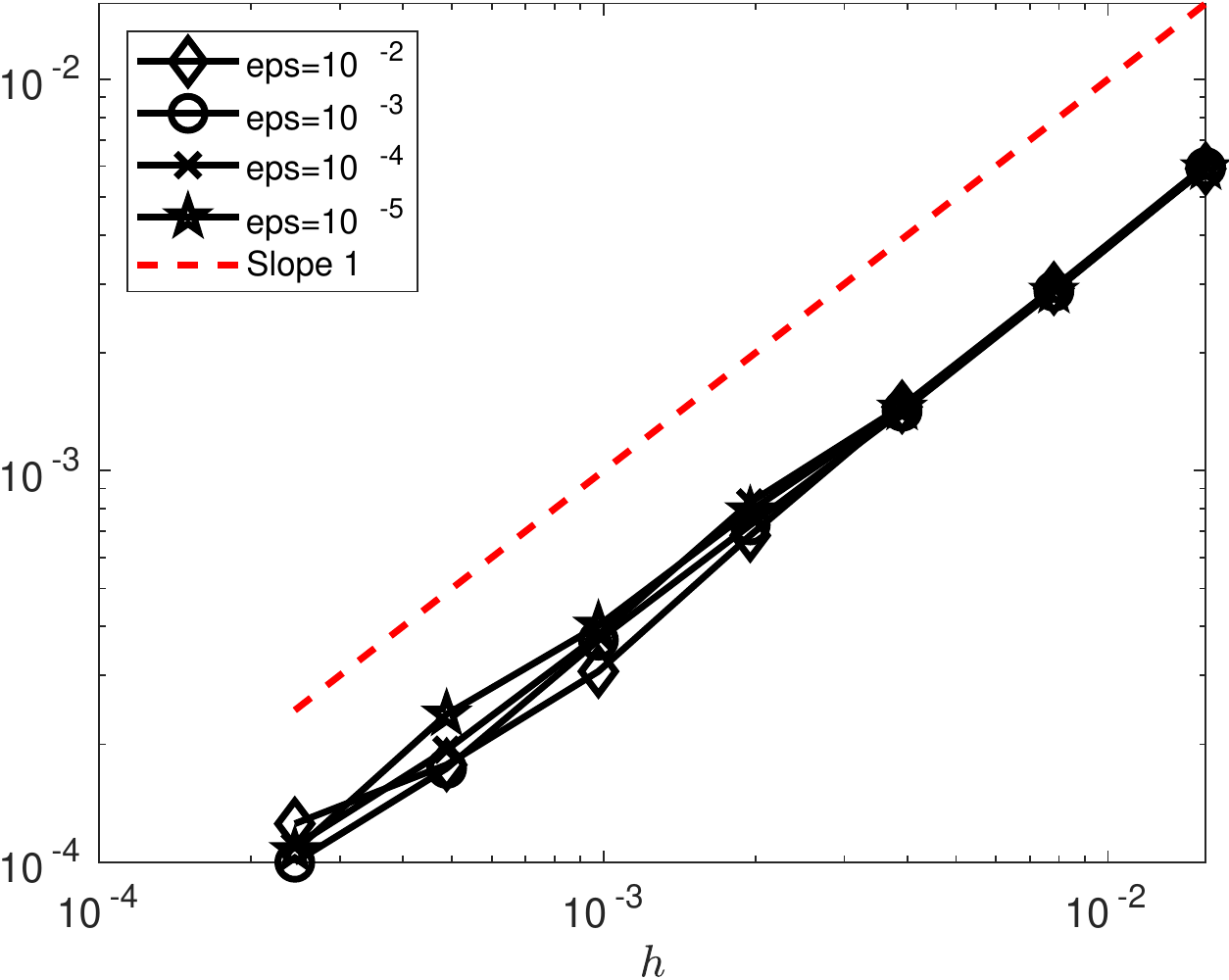}
\caption{Stochastic Maxwell--Bloch: Weak errors of the asymptotic preserving scheme~\eqref{smbIAP} 
for $\epsilon=10^{-2}, 10^{-3}, 10^{-4}, 10^{-5}$.} 
\label{fig:weakMaxBlocAP}
\end{figure}

\subsection{Explicit stochastic Poisson integrators for the stochastic rigid body system}\label{ssec:PRB}

This subsection presents explicit stochastic Poisson integrators for the stochastic rigid body system~\eqref{srb} (Example~\ref{expl-SRB}). We first give a detailed construction of the splitting scheme, which gives a stochastic Poisson integrator. We then illustrate its qualitative properties (preservation of the Casimir function) and strong and weak error estimates of the proposed scheme by numerical experiments. Finally, we illustrate the asymptotic preserving property (see Section~\ref{APsplit}) for a multiscale version of the system.

Below we state and prove Proposition~\ref{thm-srb}, which gives strong and weak rates of convergence of the proposed explicit scheme. This is a non-trivial result since the coefficients of the considered stochastic differential equation are not globally Lipschitz continuous.

%We then show that this new explicit integrator enjoy some geometric properties 
%and that it convergence in the strong and weak sense for this SDE with non-globally Lipschitz continuous coefficients.

\subsubsection{Presentation of the splitting scheme for the stochastic rigid body system}

To apply the strategy described in Section~\ref{ssec:LP} and construct explicit stochastic Poisson integrators, we first follow the approach from~\cite{MR1246065,MR2009376} for the deterministic rigid body system. The Hamiltonian function $H$ is split as $H=H_1+H_2+H_3$, with $H_j=\frac12\frac{y_j^2}{I_j}$ for $j=1,2,3$. Recall also that the Hamiltonian functions appearing in the stochastic part of the dynamics are given by $\widehat H_j=\frac12\frac{y_j^2}{\widehat I_j}$, for $j=1,2,3$.

The application of the general splitting integrator~\eqref{slpI} for the stochastic rigid body system~\eqref{srb} requires to compute the exact solutions of the deterministic subsystems
\[
\dot{y}_j=B(y_j)\nabla H_j(y_j)
\]
and of the stochastic subsystems
\[
\diff y_j=B(y_j)\nabla \widehat H_j(y_j)\circ \diff W_j(t).
\]
As explained for instance in~\cite{MR1246065,MR2009376} for the deterministic system, it is straightforward to solve such subsystems. 
We only provide the details when $j=1$. In that case, the deterministic subsystem is of the type
$$
\begin{cases}
\dot y_1&=0 \\
\dot y_2&=y_1y_3/I_1 \\
\dot y_3&=-y_1y_2/I_1.
\end{cases}
$$
%On the one hand, $y_1$ is constant: $y_1(t)=y_1(0)$ for all $t\ge 0$. On the other hand, $(y_2,y_3)$ is then solution of a linear ordinary differential equation. 
The first equation yields that $y_1$ is constant, i.\,e. $y_1(t)=y_1(0)$ for all $t\ge 0$. As a consequence, $(y_2,y_3)$ is a solution of a linear ordinary differential equation.

The deterministic subsystem when $j=1$ thus admits the exact solution
$$
\begin{pmatrix}y_1(t)\\y_2(t)\\y_3(t)\end{pmatrix}=
\begin{pmatrix}1 & 0 & 0\\ 0 & \cos(\theta t) & \sin(\theta t)\\ 0 & -\sin(\theta t) & \cos(\theta t)\end{pmatrix}y(0),
$$
where $\theta=\frac{y_1(0)}{I_1}$. Similarly, the stochastic subsystem when $j=1$ is written as
$$
\begin{cases}
\diff y_1&=0\\
\diff y_2&= y_1y_3/\widehat I_1\circ\,\diff W_1\\
\diff y_3&=-y_1y_2/\widehat I_1 \circ\,\diff W_1
\end{cases}
$$
and it admits the exact solution
$$
\begin{pmatrix}y_1(t)\\y_2(t)\\y_3(t)\end{pmatrix}=
\begin{pmatrix}1 & 0 & 0\\ 0 & \cos(\theta W_1(t)) & \sin(\theta W_1(t))\\ 0 & -\sin(\theta W_1(t)) & \cos(\theta W_1(t))\end{pmatrix}y(0).
$$
where $\theta=\frac{y_1(0)}{\widehat I_1}$.

The solutions of the deterministic and stochastic subsystems when $j=2,3$ have similar expressions, which are not written here for brevity.

Finally, setting $\Phi_h^{\text{det}}=\exp(hY_{H_3})\circ\exp(hY_{H_2})\circ\exp(hY_{H_1})$ and $\Phi_{\Delta W}^{\text{stoch}}=\exp(\Delta W_3Y_{\widehat H_3})\circ 
\exp(\Delta W_2Y_{\widehat H_2})\circ\exp(\Delta W_1Y_{\widehat H_1})$, the general splitting integrator~\eqref{slpI} applied to the stochastic rigid body system~\eqref{srb} gives 
\begin{align}\label{srbI}
\Phi_h=\Phi_h^{\text{det}}\circ\Phi_{\Delta W}^{\text{stoch}}&=\exp(hY_{H_3})\circ\exp(hY_{H_2})\circ\exp(hY_{H_1})
\nonumber\\
&\quad \circ\exp(\Delta W_3Y_{\widehat H_3})\circ \exp(\Delta W_2Y_{\widehat H_2})\circ\exp(\Delta W_1Y_{\widehat H_1}).
\end{align}
Owing to Proposition~\ref{propo:sPi}, the explicit splitting scheme~\eqref{srbI} is a stochastic Poisson integrator. In particular, it preserves the Casimir function $C(y)=y_1^2+y_2^2+y_3^2$, which has compact level sets.

%By construction, the explicit splitting scheme \eqref{srbI} preserves the Casimir of the stochastic rigid body \eqref{srb}. 

% A Strang version of the above would read 
% \begin{equation}\label{srbII}
% \Phi_h=\Phi_{h/2}^{\text{det}}\circ\Phi_{\Delta W}^{\text{stoch}}\circ\Phi_{h/2}^{\text{det}},
% \end{equation}
% \todo{Check if this is usefull?? Same question for a Strang version of the noise??}

\subsubsection{Preservation of the Casimir of the stochastic rigid body system}

Let us first illustrate the qualitative behaviour of the stochastic Poisson integrator~\eqref{srbI} introduced above. In this numerical experiment, 
%In order to test the above explicit stochastic Poisson integrator, we consider the stochastic rigid body \eqref{srb} 
the moments of inertia are $I=(2,1,2/3)$, $\widehat I=(1,2,3)$, the initial value is $y(0)=(\cos(1.1),0,\sin(1.1))$ and the final time is $T=20$.
In Figure~\ref{fig:RB}, we compare the numerical solutions given 
by the classical Euler--Maruyama scheme (applied to the It\^o formulation of the system), the stochastic midpoint scheme from~\cite{Milstein2002a}, and the explicit splitting scheme~\eqref{srbI}. The time step size is equal to $h=0.2$ ($T/h=100$). A truncation of the noise is used for this experiment, 
see above for details. As proved in Proposition~\ref{propo:sPi}, we observe that the Casimir function $C(y)=y_1^2+y_2^2+y_3^2$ is preserved when using the stochastic Poisson integrator~\eqref{srbI}. The Casimir function is also preserved when using the stochastic midpoint scheme: indeed, this integrator is known to preserve quadratic invariants, see~\cite{MR3210739}.

In addition, a plot of the evolution of the Hamiltonian for the three schemes (middle figure), and of the trajectory on the sphere of the proposed splitting scheme (right figure) are presented in Figure~\ref{fig:RB}. 

\begin{figure}[h]
\centering
\includegraphics*[width=0.3\textwidth,keepaspectratio]{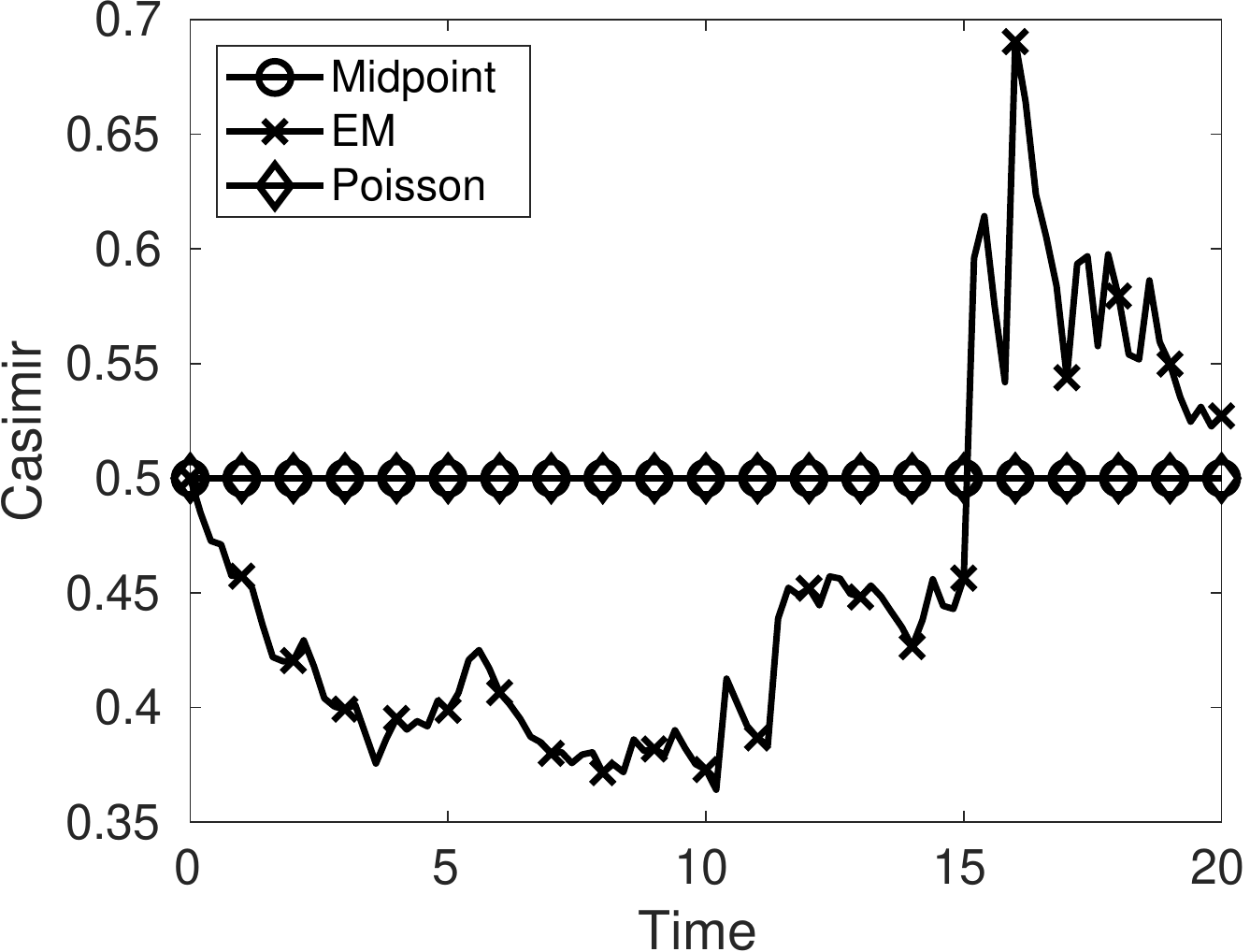}
\includegraphics*[width=0.3\textwidth,keepaspectratio]{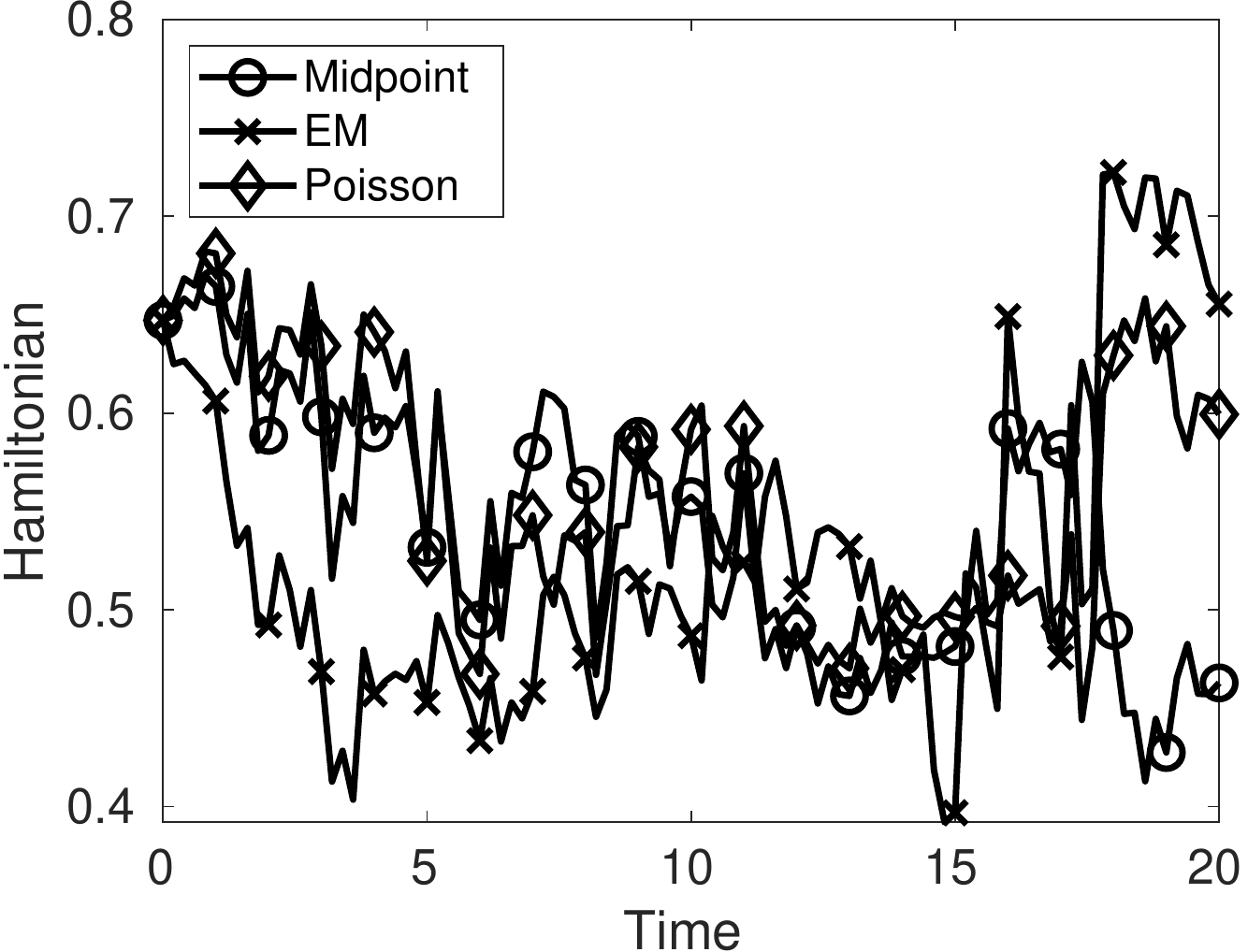}
\includegraphics*[width=0.3\textwidth,keepaspectratio]{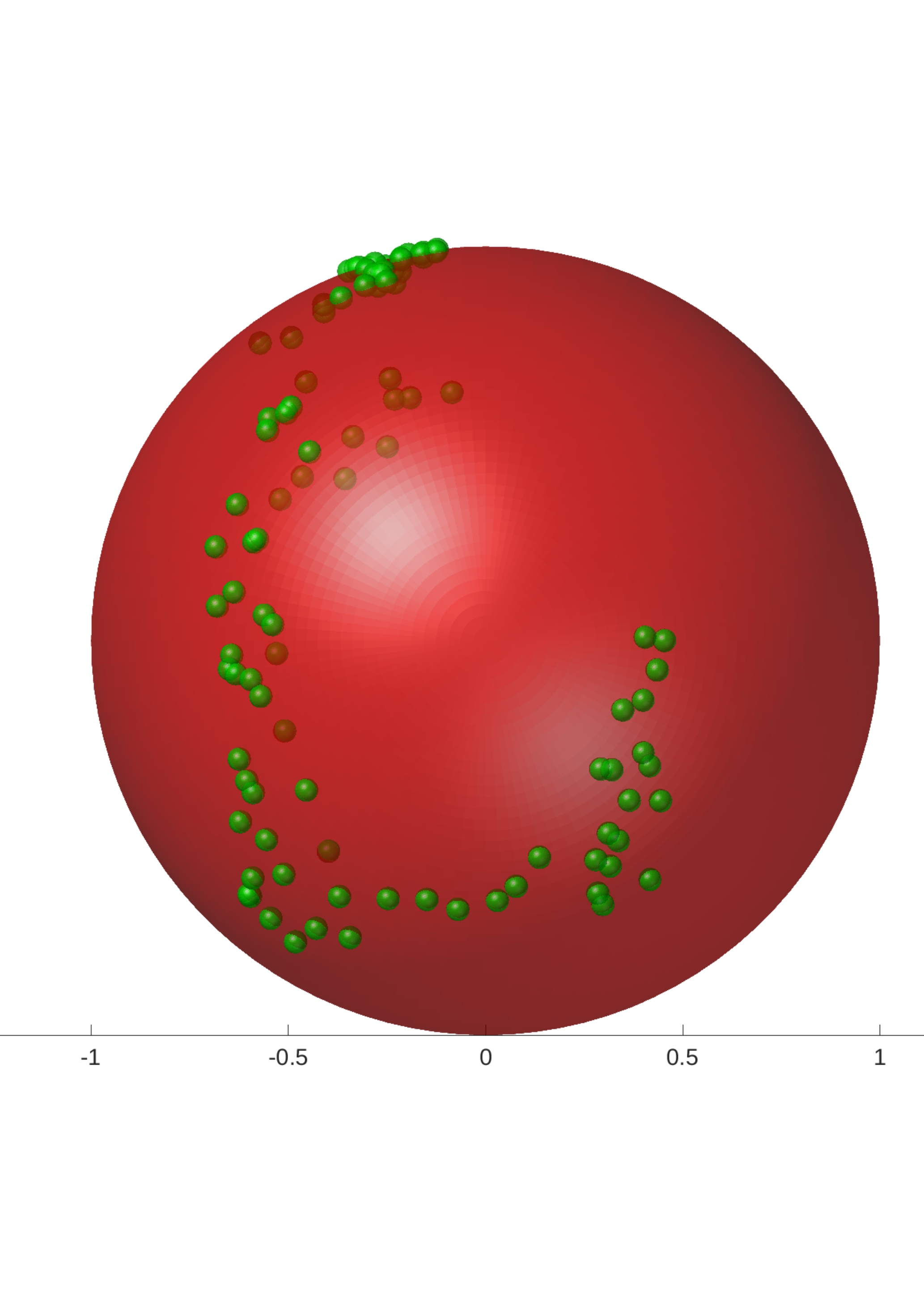}
\caption{Stochastic rigid body system: Qualitative behaviour of the Euler--Maruyama scheme ($\times$), the midpoint scheme ($\circ$), 
and the explicit stochastic Poisson integrator ($\diamond$). Left: preservation of the Casimir. Middle: evolution of the Hamiltonian. Right: trajectory on the sphere for the scheme~\eqref{srbI}.}
\label{fig:RB}
\end{figure}

\subsubsection{Strong and weak convergence of the explicit stochastic Poisson integrator for the stochastic rigid body system}

%The preservation of the Casimir by the proposed stochastic Poisson integrator is a crucial property for showing strong and weak error estimates for this numerical scheme. 

The general Theorem~\ref{thm-general} is applicable in the case of stochastic rigid body system since the Casimir function $C$ has compact level sets.

%\tobi{((Can we omit the previous two sentences? What we write here is more or less the same as in the proof a few lines later.))}
%
%\coco{DC: we could write (early version) The preservation of the Casimir by the proposed stochastic Poisson integrator is a crucial property for showing strong and weak error estimates for this numerical scheme. }
\begin{proposition}\label{thm-srb}
Consider a numerical discretisation of the stochastic rigid body system~\eqref{srb} by the stochastic Poisson integrator~\eqref{srbI}. 
Then, the strong order of convergence of this scheme is $1/2$ and the weak order of convergence is $1$. %If the system is driven by a single Wiener process, the mean-square order of convergence is $1$.
\end{proposition}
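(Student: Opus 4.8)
The plan is to deduce Proposition~\ref{thm-srb} directly from the general convergence result Theorem~\ref{thm-general}, since the stochastic rigid body system~\eqref{srb} together with its splitting integrator~\eqref{srbI} fits exactly into that abstract framework. First I would observe that~\eqref{srb} is the stochastic Poisson system~\eqref{prob} with $d=3$, $m=3$, the structure matrix $B$ of Example~\ref{expl-SRB}, the deterministic Hamiltonian split as $H=H_1+H_2+H_3$ with $H_j=\frac12 y_j^2/I_j$, and the stochastic Hamiltonians $\widehat H_j=\frac12 y_j^2/\widehat I_j$ for $j=1,2,3$. Moreover, the scheme~\eqref{srbI} is precisely the splitting integrator~\eqref{slpI} applied to this data with $p=3$ (the exact flows of the deterministic and stochastic subsystems having been computed explicitly in the preceding subsection).

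Next I would verify the regularity hypotheses of Theorem~\ref{thm-general}. Because $B$ depends linearly on $y$, it is of class $\mathcal{C}^\infty$; and each $H_j$ and each $\widehat H_j$ is a quadratic polynomial, hence also of class $\mathcal{C}^\infty$. In particular, the requirements for strong convergence ($B, H_j\in\mathcal{C}^2$, $\widehat H_j\in\mathcal{C}^3$) and for weak convergence ($B, H_j\in\mathcal{C}^5$, $\widehat H_j\in\mathcal{C}^6$) are trivially satisfied.

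The crucial structural hypothesis is the existence of a Casimir function with compact level sets. Here I would invoke that $C(y)=y_1^2+y_2^2+y_3^2$ is a Casimir of~\eqref{srb} (as recorded in Example~\ref{expl-SRB}) whose level sets are spheres, hence compact. This is exactly the assumption under which Propositions~\ref{propo:global} and~\ref{propo:numerik} furnish uniform almost sure bounds on the exact solution $y(t)$ and on the numerical solution $y^{[n]}$; these bounds are precisely what allows the proof of Theorem~\ref{thm-general} to replace $B\nabla H_j$ and $B\nabla\widehat H_j$ by compactly supported, globally Lipschitz continuous truncations and then appeal to the auxiliary estimate Lemma~\ref{lemm-aux}.

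With all hypotheses verified, Theorem~\ref{thm-general} applies and yields strong order $1/2$ and weak order $1$ for~\eqref{srbI}, which is the claim. I expect there to be no genuine obstacle: the statement is an immediate corollary of the general theory, and the only point worth stressing is why the strong rate is $1/2$ rather than $1$. Since the system is driven by $m=3$ independent Wiener processes and the stochastic vector fields $B\nabla\widehat H_j$ do not commute in general (they are distinct quadratic fields), the commutative-noise improvement of Theorem~\ref{thm-general} does not apply, so one falls into the general case giving strong order exactly $1/2$.
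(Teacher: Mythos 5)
Your proposal is correct and follows exactly the paper's own route: observe that $C(y)=y_1^2+y_2^2+y_3^2$ is a Casimir with compact level sets and then invoke Theorem~\ref{thm-general} (whose proof indeed rests on the truncation argument and Lemma~\ref{lemm-aux} you describe). Your added remark on why the strong rate is only $1/2$ for $m=3$ non-commuting noise fields is consistent with the paper, which reserves the order-$1$ improvement for the single-noise case.
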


\begin{proof}
% The coefficients of the stochastic rigid body~\eqref{srb} are not globally Lipschitz continuous and does a priori 
% not fulfill the conditions of the auxiliary Lemma~\ref{thm-conv}. 
% However, the splitting scheme \eqref{srbI} preserves the Casimir $C(y)=y_1^2+y_2^2+y_3^2$, 
% that is the norm of the solution. Hence, one can replace the functions in the right-hand side of the SDE~\eqref{srb} 
% by functions being zero outside a suitable ball and fulfilling the conditions of the auxiliary Lemma~\ref{thm-conv}, see for instance \cite{cdr20}. 
% One then concludes the proof of this proposition by applying this general convergence theorem.
%The splitting scheme~\eqref{srbI} preserves the Casimir $C(y)=y_1^2+y_2^2+y_3^2$, 
%that is the norm of the solution, which has compact level sets.  
%One can then apply the general convergence result, Theorem~\ref{thm-general}, which 
%concludes the proof of this proposition.
The stochastic Poisson system~\eqref{srb} admits the Casimir function $y\mapsto C(y)=y_1^2+y_2^2+y_3^2$, which has compact level sets.  
It then suffices to apply the general convergence result, Theorem~\ref{thm-general}, which 
concludes the proof of Proposition~\ref{thm-srb}.
\end{proof}

Let us first illustrate the strong convergence result. We compare the behaviours of the three integrators introduced above: the Euler--Maruyama scheme, the stochastic midpoint scheme, 
and the explicit stochastic Poisson integrator~\eqref{srbI}. Note that Proposition~\ref{thm-srb} is valid only for the splitting scheme~\eqref{srbI}. For this numerical experiment, the moments of inertia are $I=(2,1,2/3)$, $\widehat I=(1,2,3)$, the initial value is $y(0)=(\cos(1.1),0,\sin(1.1))$ and the final time is $T=1$. The reference solution is computed using each scheme with time step size $h_{\text{ref}}=2^{-16}$, and the schemes are applied with the range of time step sizes $h=2^{-5},\ldots,2^{-13}$. The expectation is approximated averaging the error over $M_s=500$ independent Monte Carlo samples.

%To illustrate the above theoretical results, we consider the same parameters as in the previous experiment 
%and compute the mean-square errors of the time integrators at time $T=1$. 
%To do this, we take the range of stepsizes $h=2^{-5},\ldots,2^{-13}$, a reference solution using 
%the splitting scheme with stepsize $h_{\text{ref}}=2^{-16}$ and $M_s=500$ samples to approximate the expectation. 
The results are presented in Figure~\ref{fig:msSRBL}: we observe a strong order of convergence equal to $1/2$ for the proposed explicit stochastic Poisson integrator~\eqref{srbI}, which confirms the result of Proposition~\ref{thm-srb}.

\begin{figure}[h]
\centering
\includegraphics*[width=0.48\textwidth,keepaspectratio]{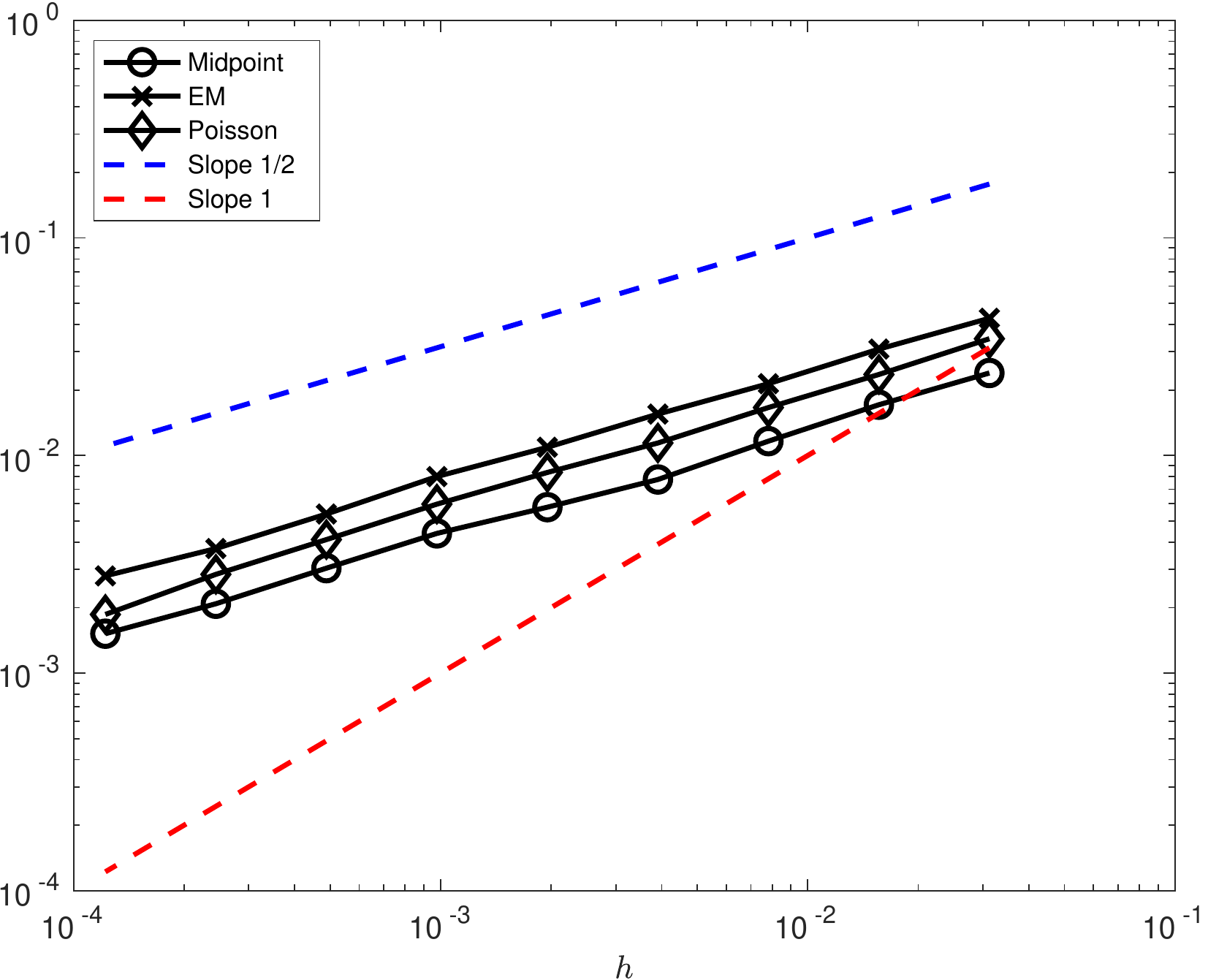}
\caption{Stochastic rigid body system: Strong errors for the Euler--Maruyama scheme ($\times$), the midpoint scheme ($\circ$), and the explicit stochastic Poisson integrator ($\diamond$).}
\label{fig:msSRBL}
\end{figure}

We now illustrate the weak convergence of the stochastic Poisson integrator~\eqref{srbI}. For this numerical experiment, the moments of inertia are $I=\hat{I}=(2,1,2/3)$, the initial value is $y(0)=(\cos(1.1),0,\sin(1.1))$ and the final time is $T=1$. The reference solution is computed using each scheme with time step size $h_{\text{ref}}=2^{-16}$, and the schemes are applied with the range of time step sizes $h=2^{-6},\ldots,2^{-12}$. The expectation is approximated averaging the error over $M_s=10^9$ independent Monte Carlo samples. Finally, the test function is given by $\phi(y)=\sin(2\pi y_1)+\sin(2\pi y_2)+\sin(2\pi y_3)$.
%
% 
%for the test function $\phi(y)=\sin(2\pi y_1)+\sin(2\pi y_2)+\sin(2\pi y_3)$. We compute the weak errors of 
%the proposed integrator at time $T=1$ for the SDE~\eqref{smb2} with initial value 
%$y=(\cos(1.1),0,\sin(1.1))$ and moments of inertia $I=\hat{I}=(2,1,2/3)$. 
%To do this, we take the range of stepsizes $h=2^{-6},\ldots,2^{-12}$, a reference solution using 
%the stochastic Poisson scheme with stepsize $h_{\text{ref}}=2^{-16}$ and $M_s=10^9$ samples to approximate the expectation. 
The results are presented in Figure~\ref{fig:weakRGB}. We observe a weak order $1$ for the proposed explicit stochastic Poisson integrator~\eqref{srbI}, which confirms the result of Proposition~\ref{thm-srb}.

\begin{figure}[h]
\centering
\includegraphics*[width=0.48\textwidth,keepaspectratio]{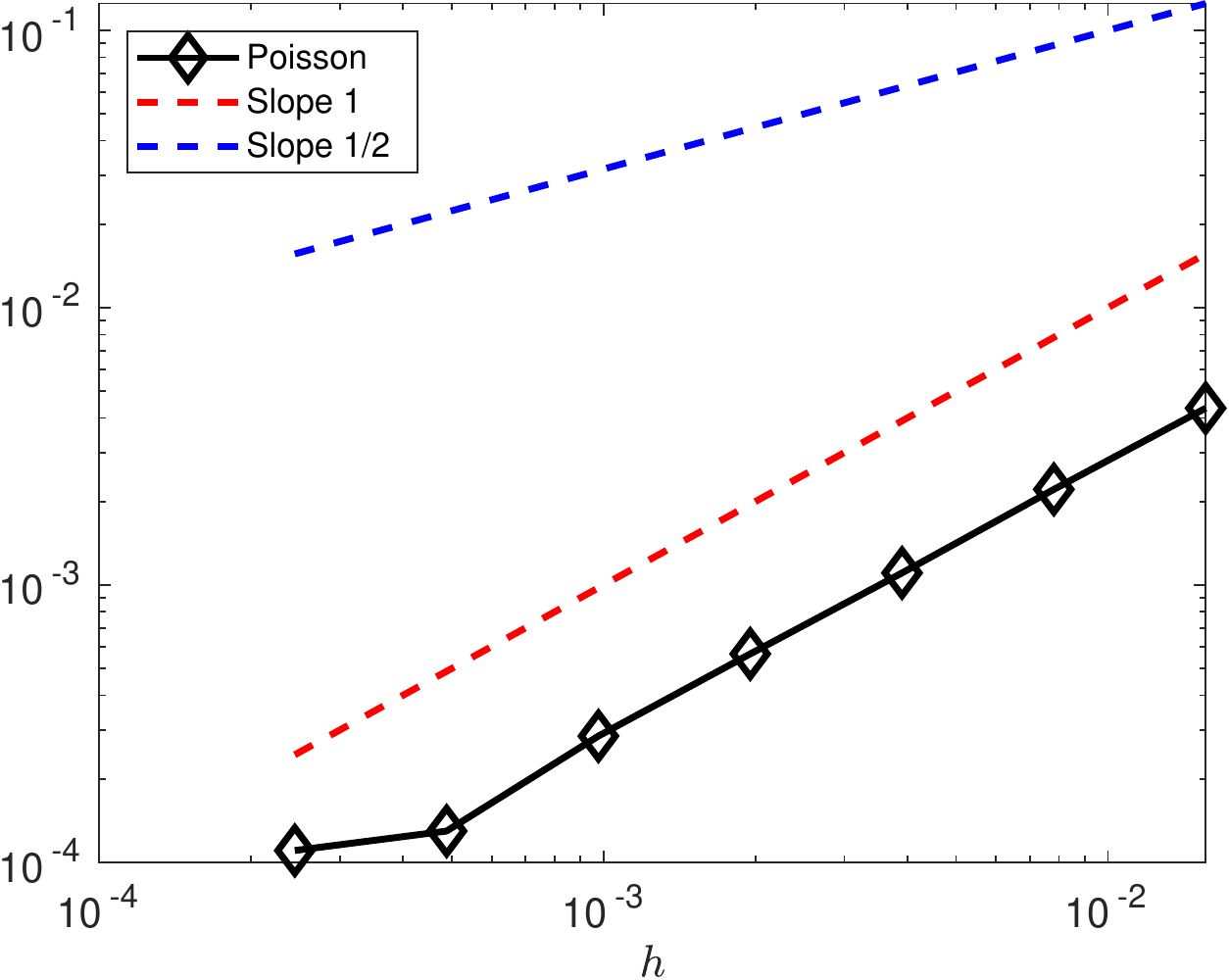}
\caption{Stochastic rigid body system: Weak error for the explicit stochastic Poisson integrator~\eqref{srbI}.}
\label{fig:weakRGB}
\end{figure}

To conclude this subsection, let us provide a numerical experiment using the scheme of weak order $2$ presented in Remark~\ref{rem:order2}. To do so, we consider the following variant of the stochastic rigid body system~\eqref{srb}:
\begin{align*}%\label{srb}
\diff\begin{pmatrix}y_1\\y_2\\y_3\end{pmatrix}&=
B(y)
\left(\nabla H(y)\,\diff t+\sigma_1\nabla \widehat H_1(y)\circ\,\diff W_1(t)
+\sigma_2\nabla \widehat H_2(y)\circ\,\diff W_2(t)\right.\nonumber\\
&\quad+\left.\sigma_3\nabla \widehat H_3(y)\circ\,\diff W_3(t) \right),
\end{align*}
with nonnegative real numbers $\sigma_1,\sigma_2,\sigma_3$. For this numerical experiment, all the values of the parameters are the same as for Figure~\ref{fig:weakRGB}, except the values of the additional parameters $\sigma_1,\sigma_2,\sigma_3$: one has either three Wiener processes with $(\sigma_1,\sigma_2,\sigma_3)=(10^{-3},10^{-3},10^{-3})$ (left figure), or a single Wiener process, with three possible choices $(\sigma_1,\sigma_2,\sigma_3)=(10^{-3},0,0)$, $(\sigma_1,\sigma_2,\sigma_3)=(0,10^{-3},0)$ and $(\sigma_1,\sigma_2,\sigma_3)=(0,0,10^{-3})$ (right figure). The results are presented in Figure~\ref{fig:weakRGBorder2}. We observe that the weak convergence seems to be of order $2$ for the scheme~\eqref{eq:order2}, however for small values of $h$ the error saturates due to the Monte Carlo approximation.

\begin{figure}[h]
\centering
\includegraphics*[width=0.48\textwidth,keepaspectratio]{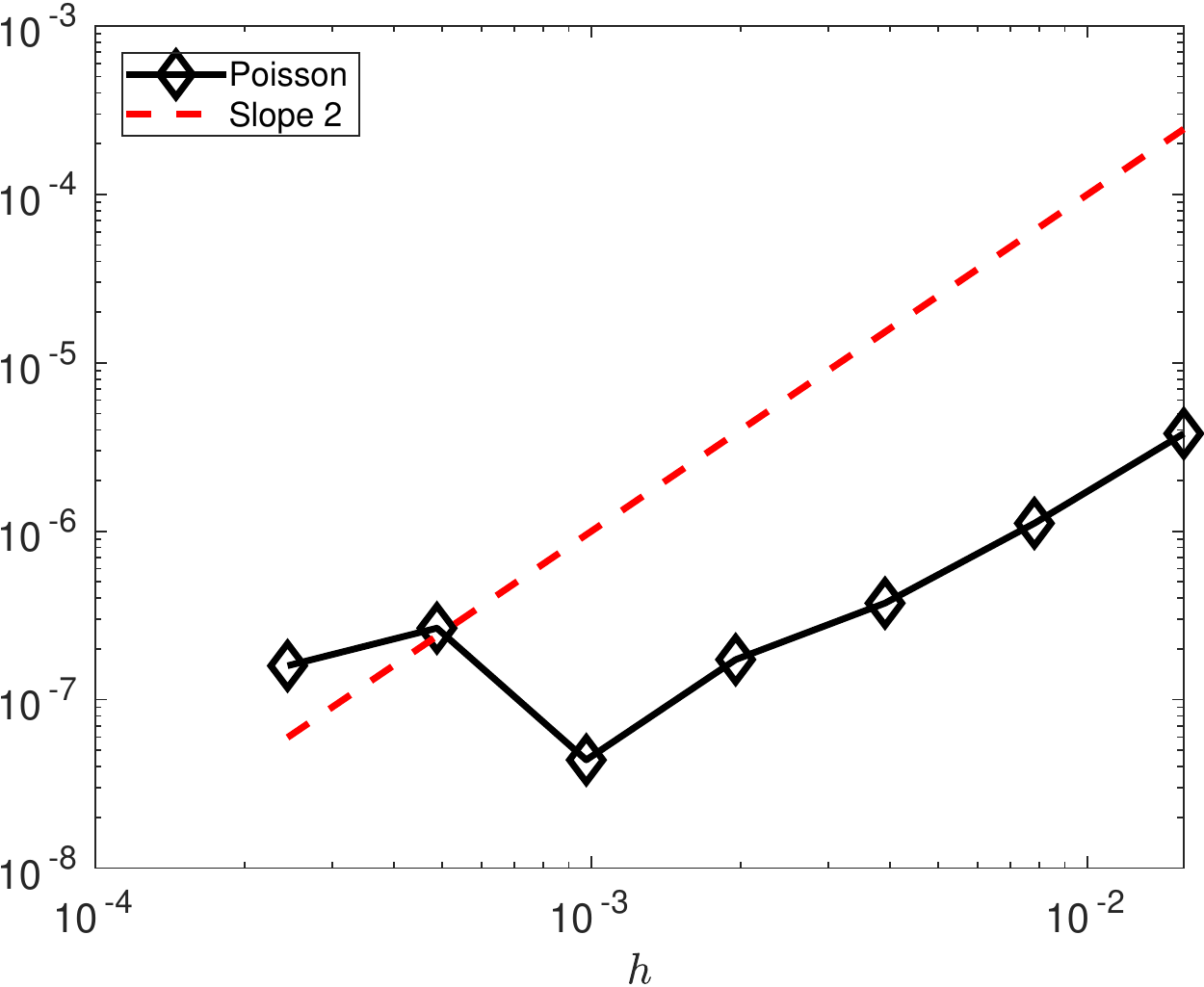}
\includegraphics*[width=0.48\textwidth,keepaspectratio]{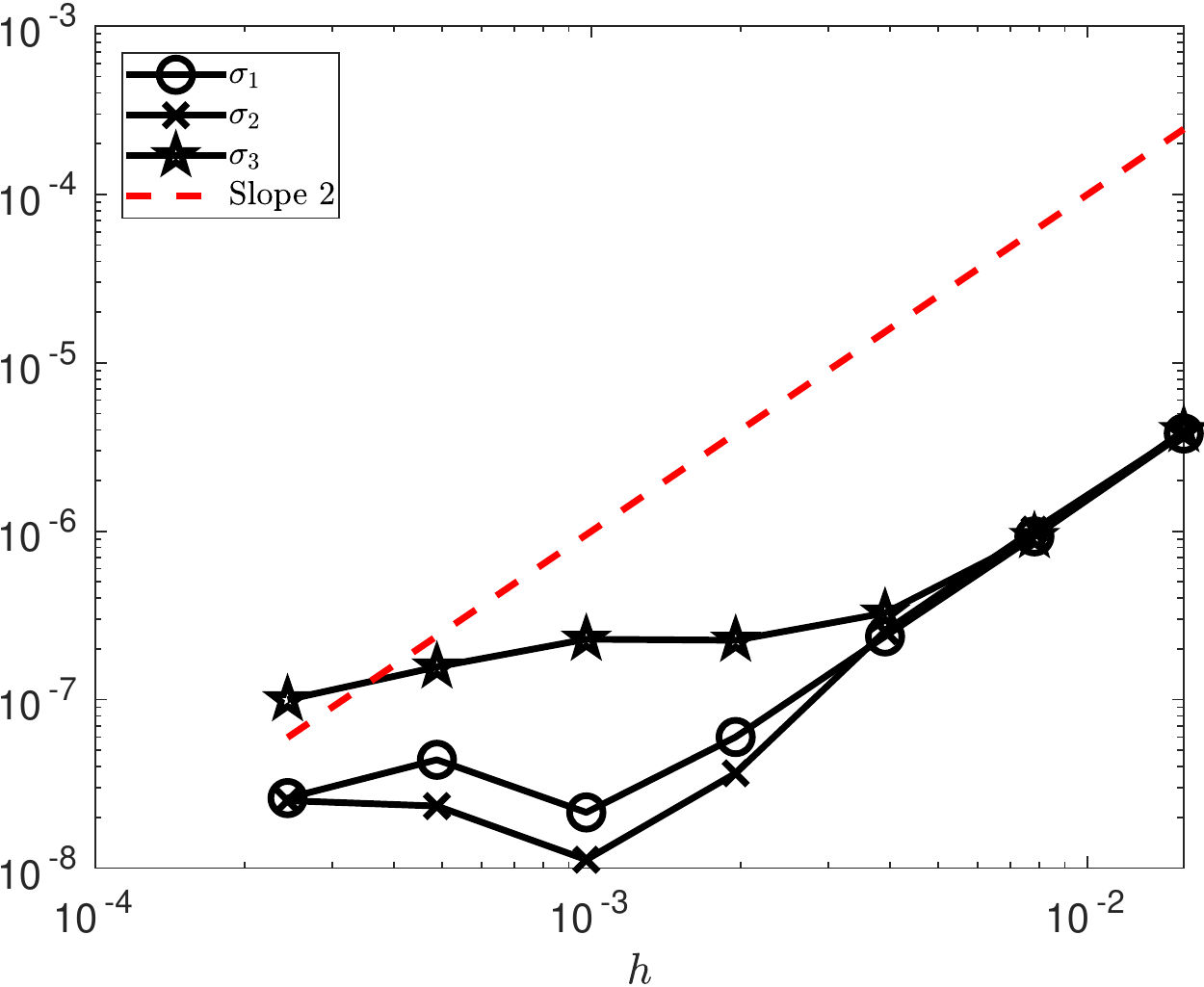}
\caption{Stochastic rigid body system: Weak error for the integrator~\eqref{eq:order2}. Left: $(\sigma_1,\sigma_2,\sigma_3)=(10^{-3},10^{-3},10^{-3})$. Right: $(\sigma_1,\sigma_2,\sigma_3)=(10^{-3},0,0)$, $(\sigma_1,\sigma_2,\sigma_3)=(0,10^{-3},0)$ and $(\sigma_1,\sigma_2,\sigma_3)=(0,0,10^{-3})$.} 
\label{fig:weakRGBorder2}
\end{figure}

\subsubsection{Asymptotic preserving splitting scheme for the stochastic rigid body system}

In this subsection, we consider the multiscale version~\eqref{prob_eps}, parametrized by $\epsilon$, of the stochastic rigid body system. Based on the expression~\eqref{srbI} for the stochastic Poisson integrator, applying the general asymptotic preserving scheme~\eqref{APscheme} introduced in Section~\ref{APsplit} gives the scheme
\begin{equation}\label{srbIAP}
\left\lbrace
\begin{aligned}
y^{\epsilon,n}&=\exp(hY_{H_3})\circ\exp(hY_{H_2})\circ\exp(hY_{H_1})\\
&\quad \circ\exp(\frac{h\xi_{3}^{\epsilon,[n]}}{\epsilon}Y_{\widehat H_3})\circ \exp(\frac{h\xi_{2}^{\epsilon,[n]}}{\epsilon}Y_{\widehat H_2})\circ\exp(\frac{h\xi_{1}^{\epsilon,[n]}}{\epsilon}Y_{\widehat H_1})\\
\xi_k^{\epsilon,[n]}&=\xi_{k}^{\epsilon,[n-1]}-\frac{h}{\epsilon^2}\xi_{k}^{\epsilon,[n]}+\frac{\Delta_n W_k}{\epsilon}=\frac{1}{1+\frac{h}{\epsilon^2}}\Bigl(\xi_k^{\epsilon,[n-1]}+\frac{\Delta_n W_k}{\epsilon}\Bigr),\quad k=1,2,3.
\end{aligned}
\right.
\end{equation}
The initial values are $y^{\epsilon,[0]}=y^{[0]}=y(0)$ and $\xi_{k}^{\epsilon,[0]}=0$, $k=1,2,3$.

First, let us illustrate the qualitative behaviour of the scheme~\eqref{srbIAP}, for different values of $\epsilon$. For this numerical experiment, the moments of inertia are $I=\hat{I}=(2,1,2/3)$, the initial value is $y(0)=(\cos(1.1),0,\sin(1.1))$ and the final time is $T=1$. The time step size is equal to $h=10^{-4}$. In Figure~\ref{fig:trajSRBAPa}, we plot the evolution of the Hamiltonian (top) and the Casimir (bottom) for the asymptotic preserving scheme~\eqref{srbIAP} applied with $\epsilon= 1,0.1,0.001$ (left) and for the stochastic Poisson integrator~\eqref{srbI}, formally $\epsilon=0$ (right). We observe the preservation of the Casimir function. In Figure~\ref{fig:trajSRBAPb}, we plot the evolution of the approximation of the trajectory $t_n\mapsto y(t_n)=(y_1(t_n),y_2(t_n),y_3(t_n))$, for different values of $\epsilon=1,0.1,0.001,0$. We observe that the trajectories are more regular when $\epsilon$ is large and converge to the solution of the stochastic Poisson integrator~\eqref{srbI} as $\epsilon$ tends to $0$.

%Let us start by illustrating qualitative properties of the asymptotic preserving scheme~\eqref{APscheme} for 
%the stochastic rigid body with the same parameters as in the above experiment but $T=1$, $N=10000$ and 
%no truncation of the Brownian motions. We plot the values of the Hamiltonian and of the Casimir in Figure~\ref{fig:trajSRBAPa} 
%as well as the trajectories in Figure~\ref{fig:trajSRBAPb} along the asymptotic preserving scheme for 
%$\epsilon=1,0.1,0.001$ and $\epsilon=0$ (that is the stochastic Poisson integrator~\eqref{srbI}. As expected, the Casimir is 
%preserved for the asymptotic preserving scheme and the trajectories are more regular for large $\epsilon$. 

\begin{figure}[h]
\begin{subfigure}{.5\textwidth}
\centering
\centering\includegraphics*[width=0.8\textwidth,keepaspectratio]{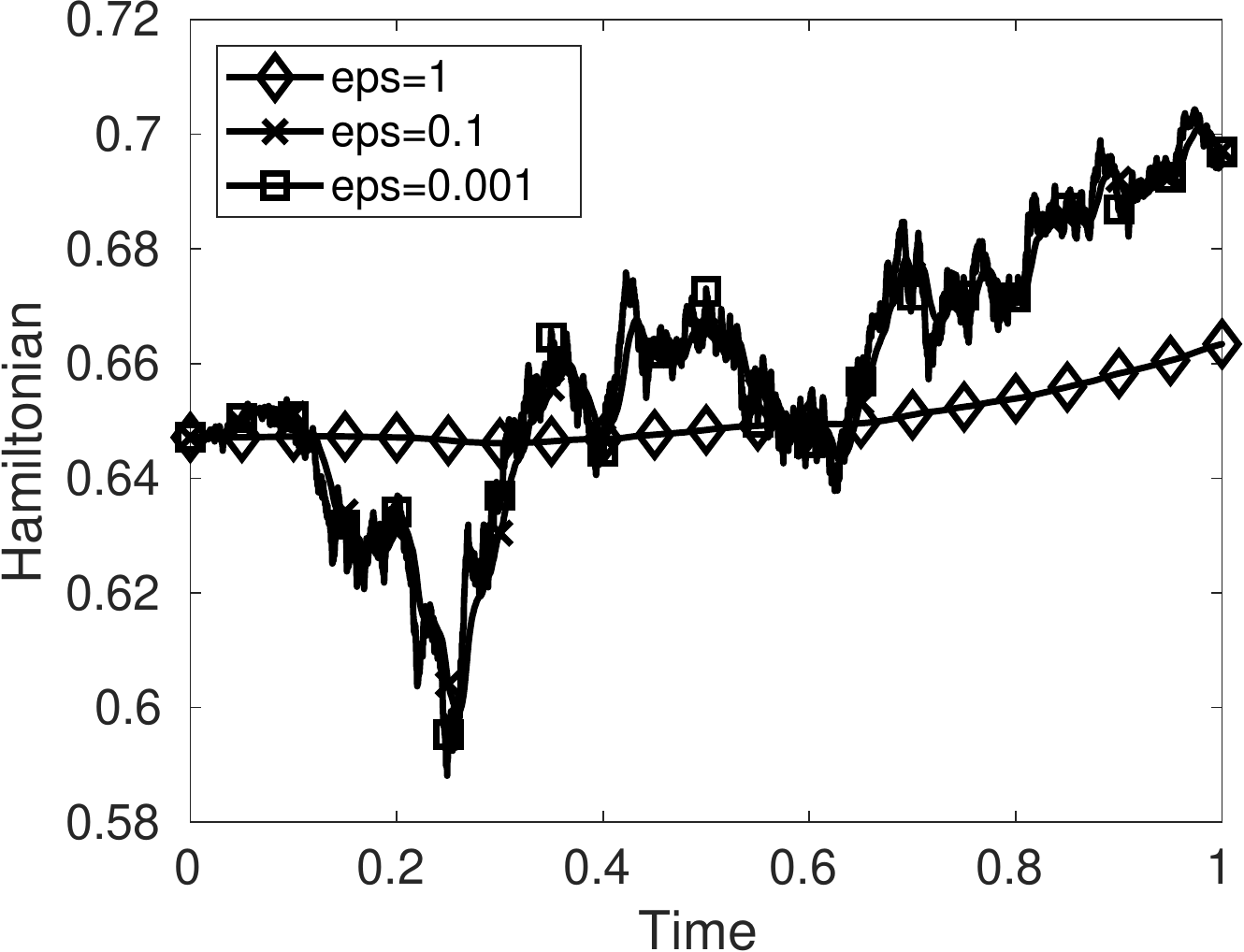}
\end{subfigure}
~
\begin{subfigure}{.5\textwidth}
\centering\includegraphics*[width=0.8\textwidth,keepaspectratio]{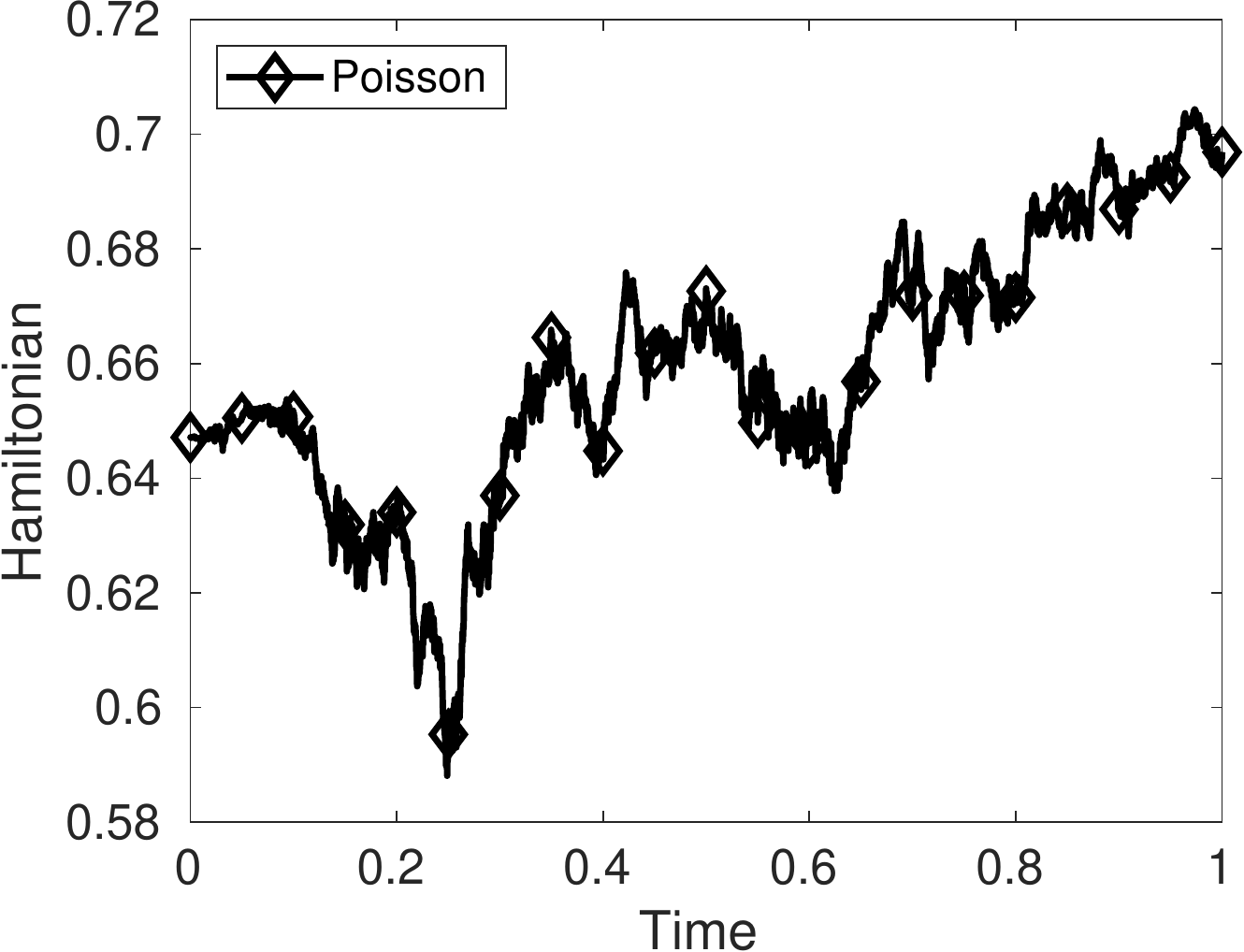}
\end{subfigure}
\newline
\begin{subfigure}{.5\textwidth}
\centering\includegraphics*[width=0.8\textwidth,keepaspectratio]{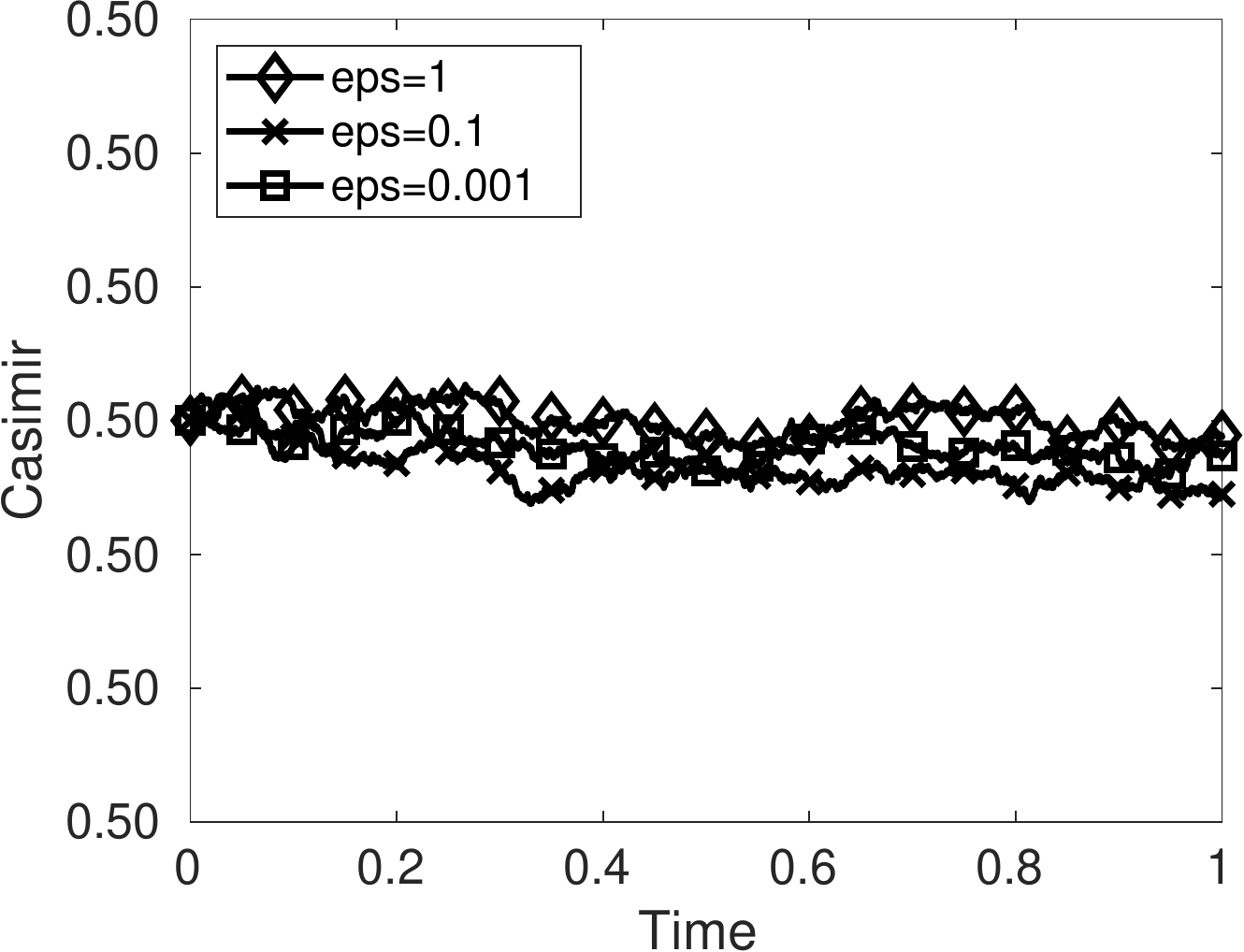}
\end{subfigure}
~
\begin{subfigure}{.5\textwidth}
\centering
\includegraphics*[width=0.8\textwidth,keepaspectratio]{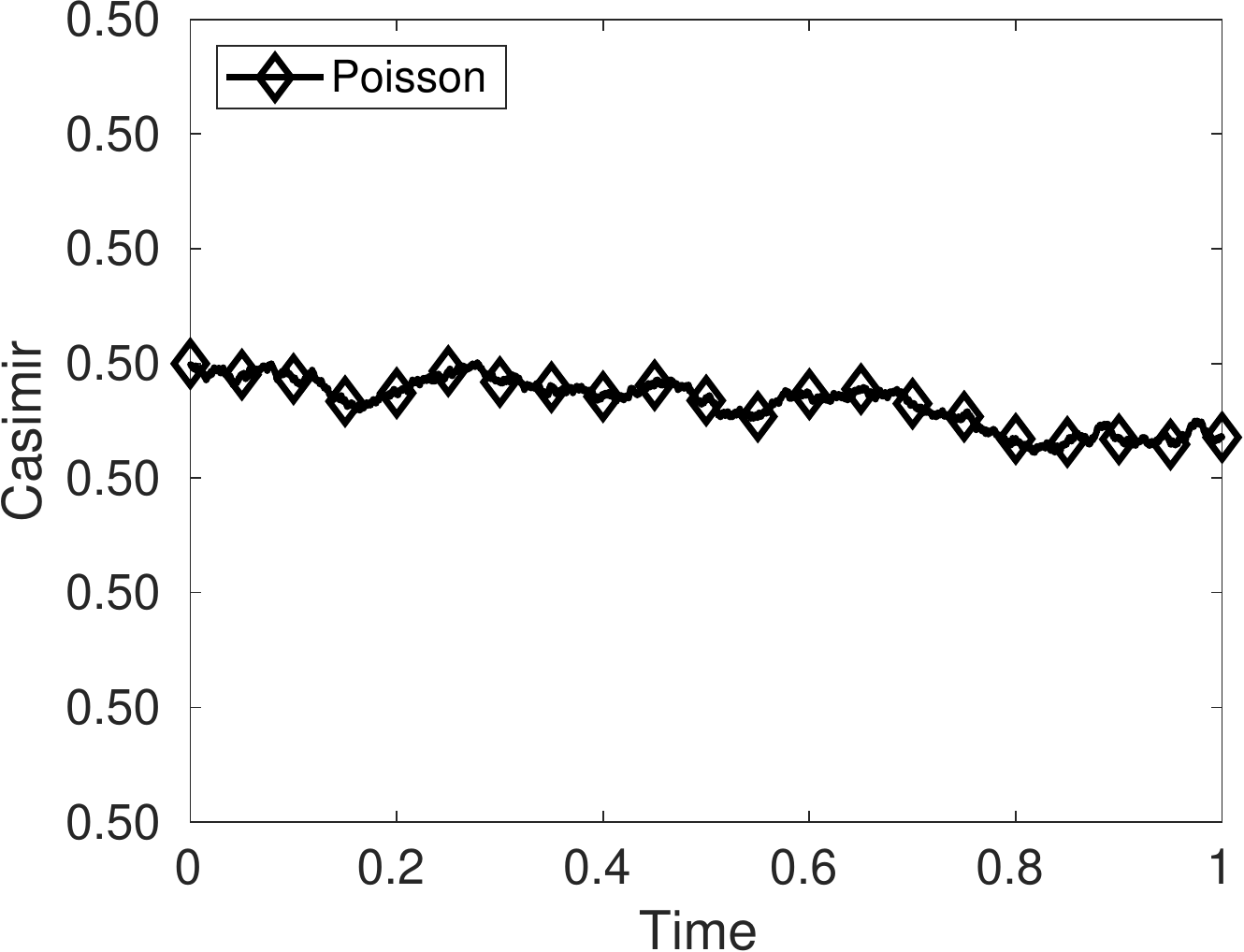}
\end{subfigure}
\caption{Stochastic rigid body system: evolution of the Hamiltonian (top) and the Casimir (bottom) of the numerical solution using the asymptotic preserving scheme~\eqref{srbIAP}. Left: $\epsilon=1,0.1,0.001$. Right: $\epsilon=0$.}
\label{fig:trajSRBAPa}
\end{figure}

\begin{figure}[h]
\centering
\begin{subfigure}{.5\textwidth}
\centering
\includegraphics*[width=0.8\textwidth,keepaspectratio]{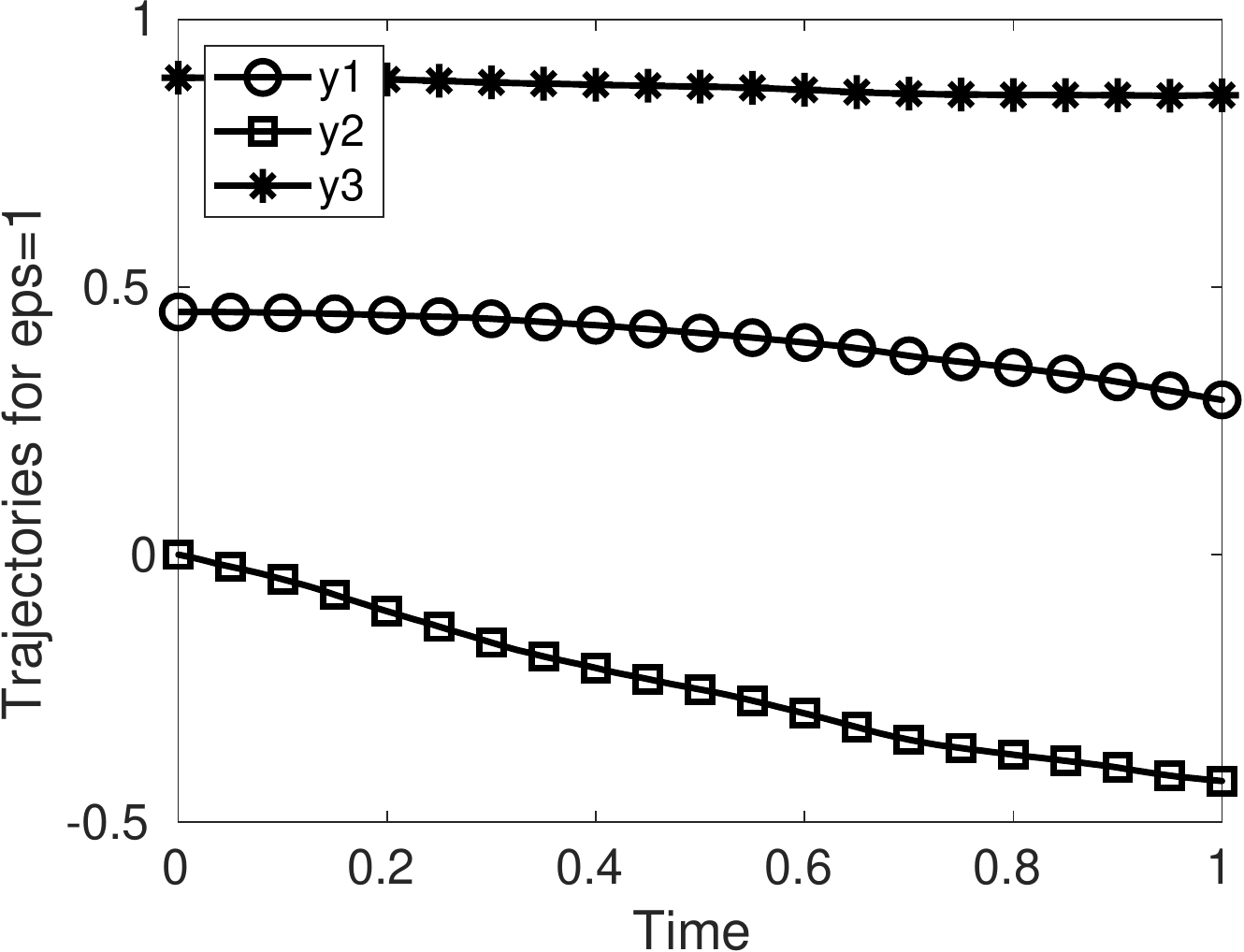}
\end{subfigure}
~
\begin{subfigure}{.5\textwidth}
\centering\includegraphics*[width=0.8\textwidth,keepaspectratio]{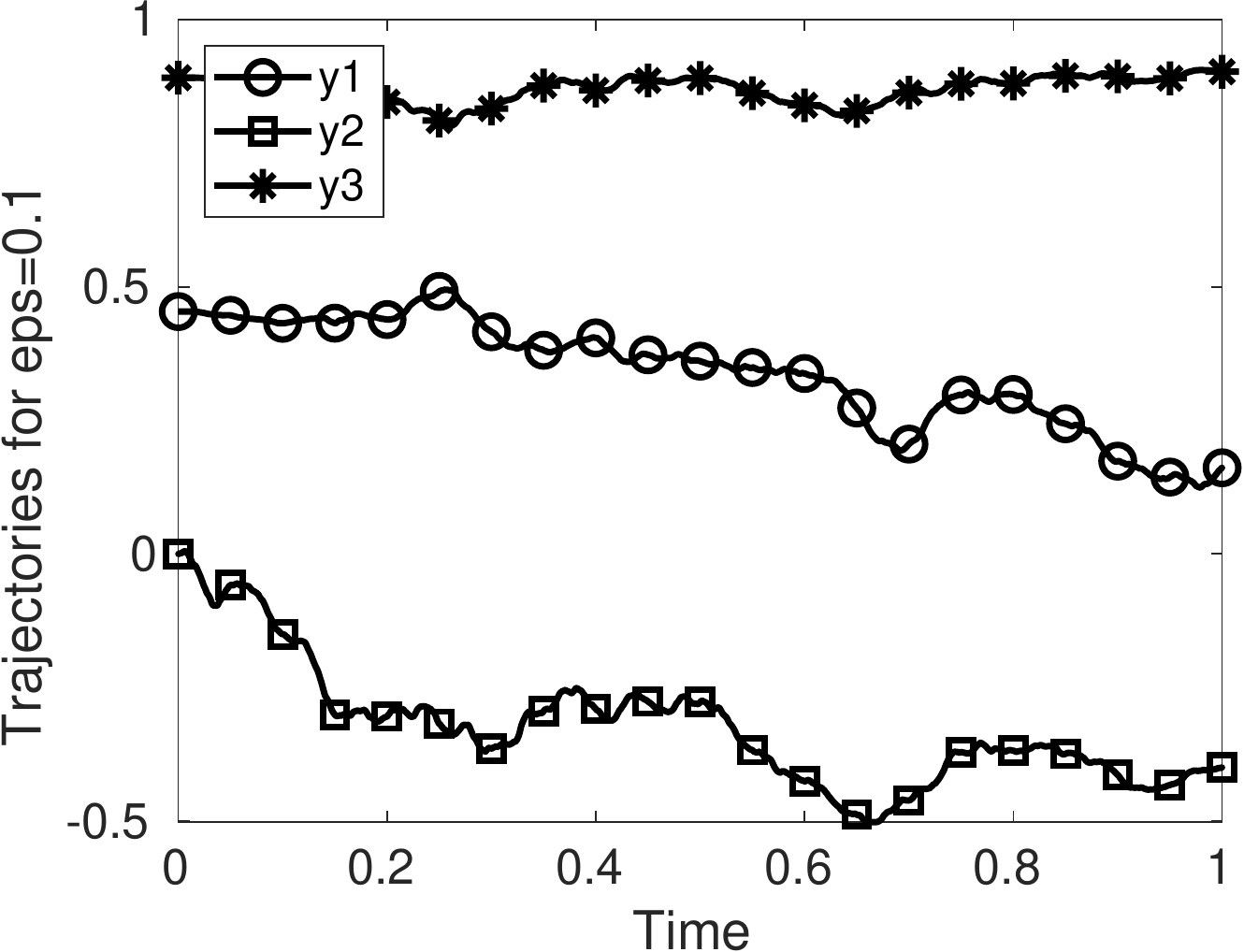}
\end{subfigure}

\begin{subfigure}{.5\textwidth}
\centering\includegraphics*[width=0.8\textwidth,keepaspectratio]{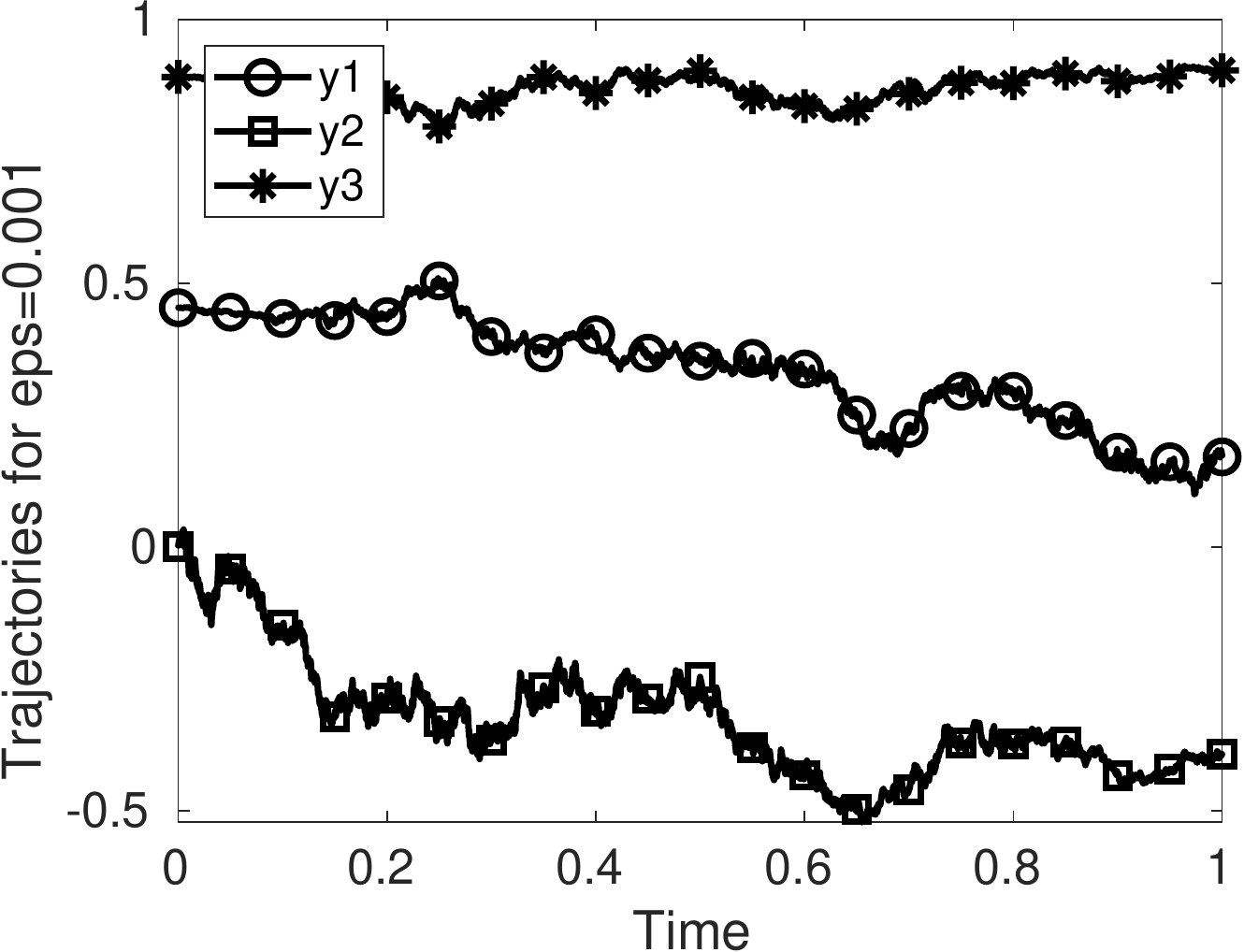}
\end{subfigure}
~
\begin{subfigure}{.5\textwidth}
\centering\includegraphics*[width=0.8\textwidth,keepaspectratio]{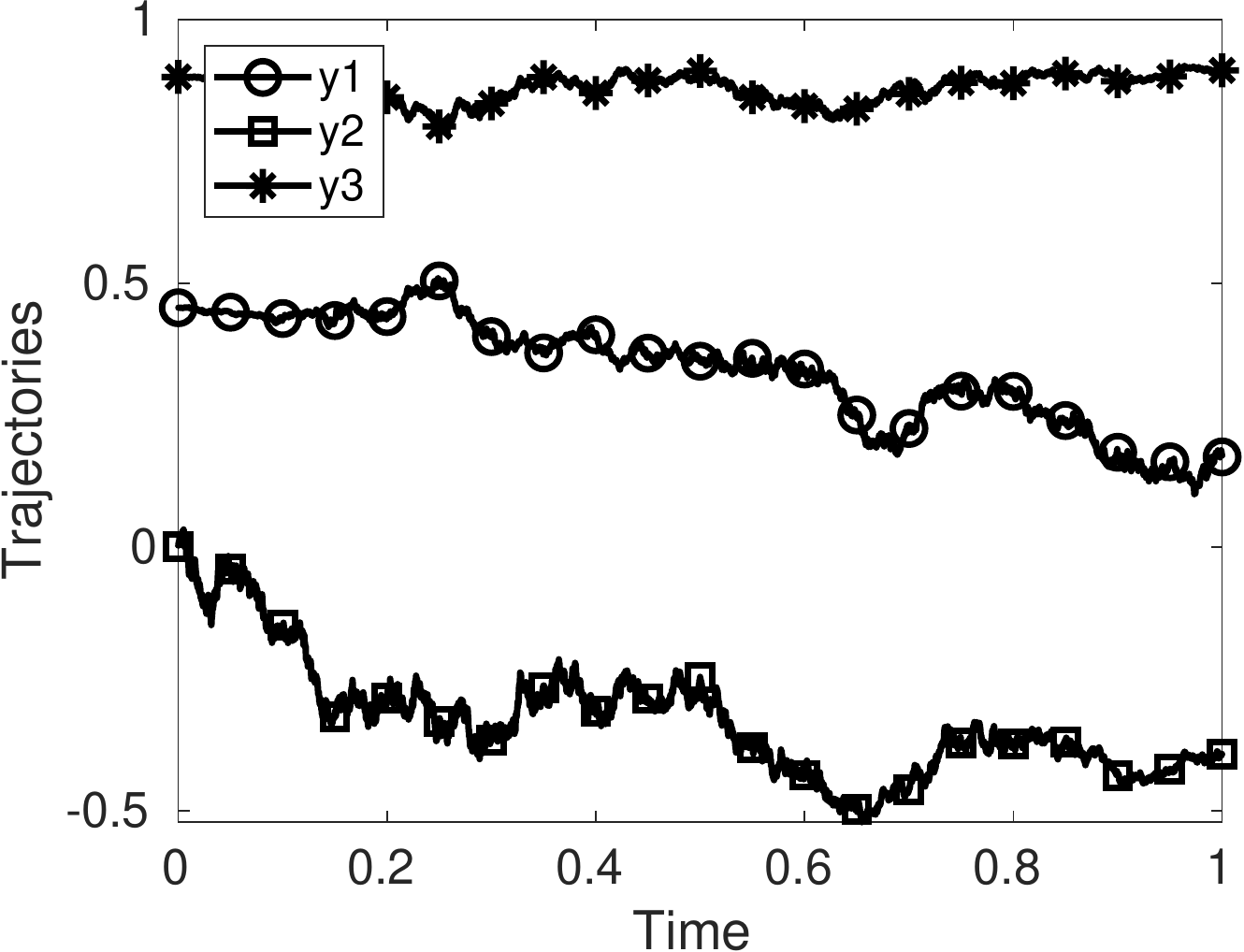}
\end{subfigure}
\caption{Stochastic rigid body system: Trajectory of the numerical solutions using the asymptotic preserving scheme~\eqref{srbIAP}. Top: $\epsilon=1,0.1$. Bottom: $\epsilon=0.001,0$.}
\label{fig:trajSRBAPb}
\end{figure}

Finally, the last experiment of this subsection illustrates the uniform accuracy property of the splitting scheme~\eqref{srbIAP} with respect to the parameter $\epsilon$ in the weak sense. For this numerical experiment, the moments of inertia are $I=(2,1,2/3)$ and $\hat{I}=(20,10,20/3)$, the initial value is $y(0)=(\cos(1.1),0,\sin(1.1))$ and the final time is $T=1$. The reference solution is computed using each scheme with time step size $h_{\text{ref}}=2^{-16}$, and the schemes are applied with the range of time step sizes $h=2^{-6},\ldots,2^{-12}$. The expectation is approximated averaging the error over $M_s=10^8$ independent Monte Carlo samples. The test function is given by $\phi(y)=\sin(2\pi y_1)+\sin(2\pi y_2)+\sin(2\pi y_3)$. The parameter $\epsilon$ takes the following values: $\epsilon=10^{-2}, 10^{-3}, 10^{-4}, 10^{-5}$. The results are seen in Figure~\ref{fig:plotmatlabRGBAP}. We observe that the weak error seems to be bounded uniformly with respect to $\epsilon$, with an order of convergence $1$. For a standard method such as the Euler--Maruyama scheme, the behaviour would be totally different: for fixed time step size $h$, the error is expected to be bounded away from $0$ when $\epsilon$ goes to $0$. Based on this numerical experiment, we conjecture that the asymptotic preserving scheme is uniformly accurate.

\begin{figure}[h]
\centering
\includegraphics*[width=0.48\textwidth,keepaspectratio]{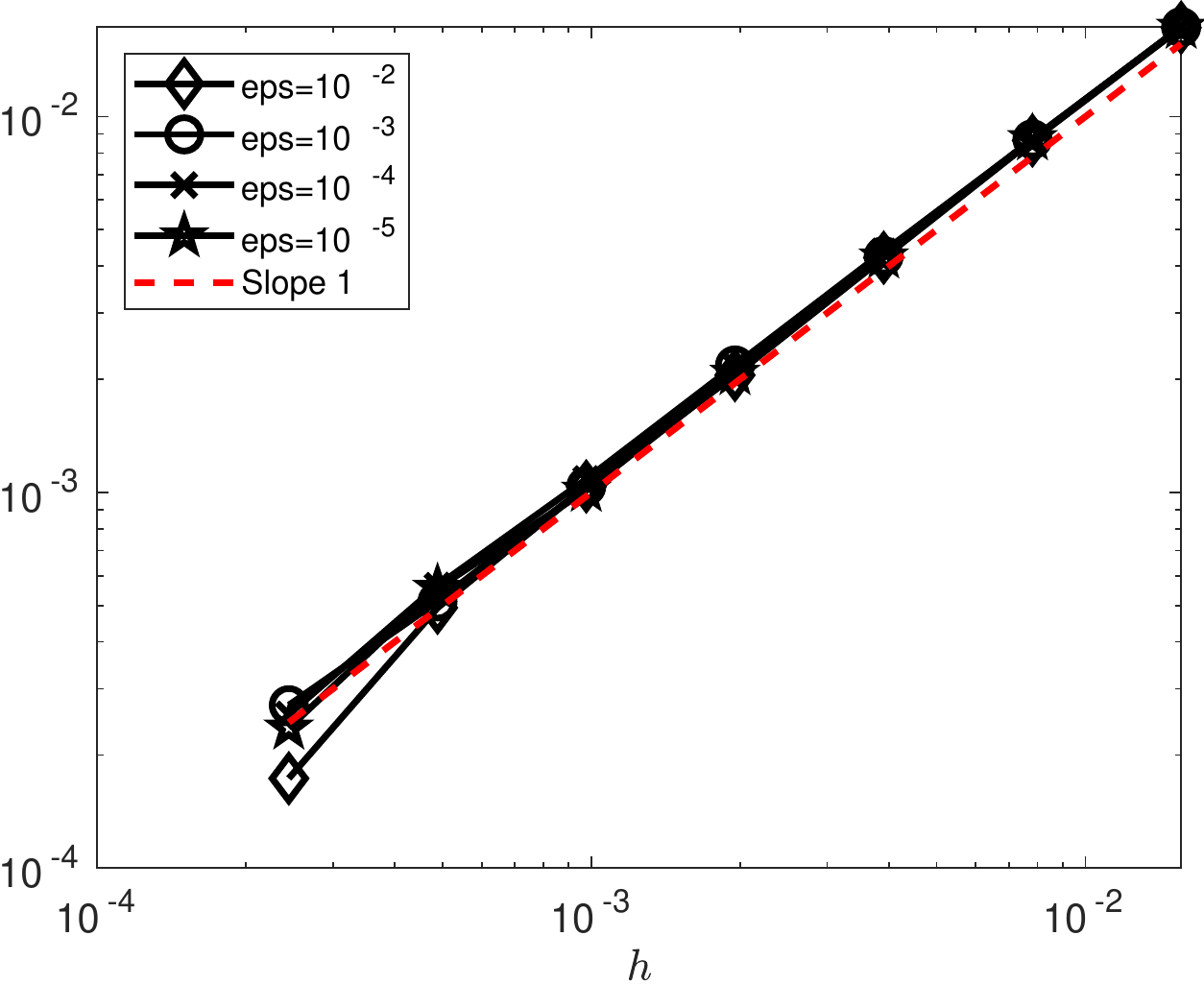}
\caption{Stochastic rigid body system: Weak errors of the asymptotic preserving scheme~\eqref{srbIAP} for $\epsilon=10^{-2}, 10^{-3}, 10^{-4}, 10^{-5}$.} 
\label{fig:plotmatlabRGBAP}
\end{figure}

\subsection{Explicit stochastic Poisson integrators for the stochastic sine--Euler system}\label{ssec:PSE} 

The last example of a stochastic Lie--Poisson system studied in this work is the stochastic version of the sine--Euler equations~\eqref{stochSE} introduced in Example~\ref{expl-SE}. Like for the previous examples, an explicit stochastic Poisson integrator is designed using a splitting strategy. We both illustrate the qualitative behaviour of the proposed integrator (preservation of Casimir functions) and strong error estimates. Note that numerical experiments which would illustrate weak error estimates or the asymptotic preserving property are not reported for this example: indeed, the results would be similar to those presented for the two other examples above.

%Another application of the above strategy is the derivation of an explicit stochastic Poisson integrator for stochastic sine--Euler equations. 

Recall that the stochastic sine--Euler system~\eqref{stochSE} is of the type
\begin{align*}
\diff \omega&=B(\omega)\left(\nabla H(\omega)\,\diff t+
\sigma_{(1,0)}\nabla \widehat H_{(1,0)}(\omega)\circ\,\diff W_{(1,0)}(t)+\sigma_{(1,1)} \nabla \widehat H_{(1,1)}(\omega)\circ\,\diff W_{(1,1)}(t) \right.\nonumber\\
&\quad\left.+\sigma_{(0,1)}\nabla \widehat H_{(0,1)}(\omega)\circ\,\diff W_{(0,1)}(t) 
+\sigma_{(-1,1)}\nabla \widehat H_{(-1,1)}(\omega)\circ\,\diff W_{(-1,1)}(t)\right),
\end{align*}
%where, for $\bf k\in\{(1,0),(1,1),(0,1),(-1,1)\}$, the Hamiltonian functions are $\widehat H_{\bf k}(\omega)=H_{\bf k}(\omega)=\frac{\omega_{\bf k}\omega_{\bf k}^\star}{2|\bf k|^2}$, 
%$\sigma_{\bf k}\ge 0$ are nonnegative real numbers, and $W_{\bf k}$ are independent standard Wiener process. 
%\begin{align*}
%\diff \omega&=B(\omega)\left(\nabla H(\omega)\,\diff t+
%\sigma_1\nabla \widehat H_1(\omega)\circ\,\diff W_1(t)+\sigma_2 \nabla \widehat H_2(\omega)\circ\,\diff W_2(t) \right.\nonumber\\
%&\quad\left.+\sigma_3\nabla \widehat H_3(\omega)\circ\,\diff W_3(t) 
%+\sigma_4\nabla \widehat H_4(\omega)\circ\,\diff W_4(t)\right).
%\end{align*}
%\ceb{CE: not the same notation as in~\eqref{stochSE}. Should we change?}
This is a stochastic Lie--Poisson system. In order to design a splitting integrator, note that the Hamiltonian function $H$ can be split as $H=H_{(1,0)}+H_{(1,1)}+H_{(0,1)}+H_{(-1,1)}$, with $H_{\bf k}(\omega)=\widehat H_{\bf k}(\omega)=\frac{\omega_{\bf k}\omega_{\bf k}^\star}{|\bf k|^2}$. 

Like for the other examples, the deterministic subsystems
\[
\dot{\omega}_{\bf k}=B(\omega_{\bf k})\nabla H_{\bf k}(\omega_{\bf k})
\]
and the stochastic subsystems
\[
\diff \omega_{\bf k}=B(\omega_{\bf k})\nabla \widehat H_{\bf k}(\omega_{\bf k})\circ \diff W_{\bf k}(t)
\]
can be solved exactly: indeed, for each subsystem, the variable $\omega_{\bf k}$ is preserved and the three other variables evolve following a linear differential equation. We refer to~\cite{MR1860719,MR1246065} for the explanation of this idea for the deterministic subsystems. The treatment of the stochastic subsystems is straightforward using the exact solution of the subsystems $\dot{\omega}_{\bf k}=B(\omega_{\bf k})\nabla \widehat H_{\bf k}(\omega_{\bf k})$. 
%By ``chance'', each deterministic and stochastic subsystems can again be solved exactly 
%and one can again derive an explicit Poisson integrator for the SDE \eqref{stochSE} 
%based on a splitting strategy. Details for the deterministic part are presented in \cite{MR1246065}. 
The splitting scheme for the stochastic sine--Euler SDE \eqref{stochSE} then reads 
\begin{align}\label{stochSEI}
\Phi_h=\Phi_h^{\text{det}}\circ\Phi_{\Delta W}^{\text{stoch}}&=
\exp(h\Omega_{H_{(-1,1)}})\circ\exp(h\Omega_{H_{(0,1)}})\circ\exp(h\Omega_{H_{(1,1)}})\circ\exp(h\Omega_{H_{(1,0)}})\nonumber \\
&\quad\circ
\exp(\sigma_{(-1,1)}\Delta W_{(-1,1)}\Omega_{\widehat H_{(-1,1)}})\circ\exp(\sigma_{(0,1)}\Delta W_{(0,1)}\Omega_{\widehat H_{(0,1)}})\nonumber\\
&\quad\circ \exp(\sigma_{(1,1)}\Delta W_{(1,1)}\Omega_{\widehat H_{(1,1)}})
\circ\exp(\sigma_{(1,0)}\Delta W_{(1,0)}\Omega_{\widehat H_{(1,0)}}),
\end{align}
where we denote by $\Omega_{H_{\bf k}}$, resp. by $\Omega_{\widehat H_{\bf k}}$, the exact flow of the ODE subsystem, resp. SDE subsystem, with index ${\bf k}\in\{(1,0),(1,1),(0,1),(-1,1)\}$.

\subsubsection{Preservation of Casimir functions for the stochastic sine--Euler system}

Recall that the stochastic sine--Euler system admits two Casimir functions $C_1$ and $C_2$ given in Example~\ref{expl-SE}. Owing to Proposition~\ref{propo:sPi}, the numerical scheme~\eqref{stochSEI} is a stochastic Poisson integrator, in particular it preserves the two Casimir functions $C_1$ and $C_2$. 
%By construction, the stochastic Poisson integrator \eqref{stochSEI} preserves both Casimirs. 
We numerically illustrate this property in Figure~\ref{fig:trajSE}, where one sample of the numerical solution is computed with the time step size $h=0.02$.
%where we use $50$ steps of 
%the stochastic Poisson integrator to simulate one sample of the SDE \eqref{stochSE} 
%on the time interval $[0,1]$.
The initial value is 
$\omega(0)=(\omega_{(1,0)}(0),\omega_{(1,1)}(0),\omega_{(0,1)}(0),\omega_{(-1,1)}(0))=(0.1+0.3i, 0.2+0.3i, 0.3+0.2i, 0.4+0.1i)$. In addition, $\sigma_k=1$ for all $k$ in this experiment. Figure~\ref{fig:trajSE} confirms that the two Casimir functions are preserved by the proposed integrator~\eqref{stochSEI}.
%The parameters $\sigma_j$, for $j=1,\ldots,4$, are set to one.

\begin{figure}[h]
\centering
\includegraphics*[width=0.48\textwidth,keepaspectratio]{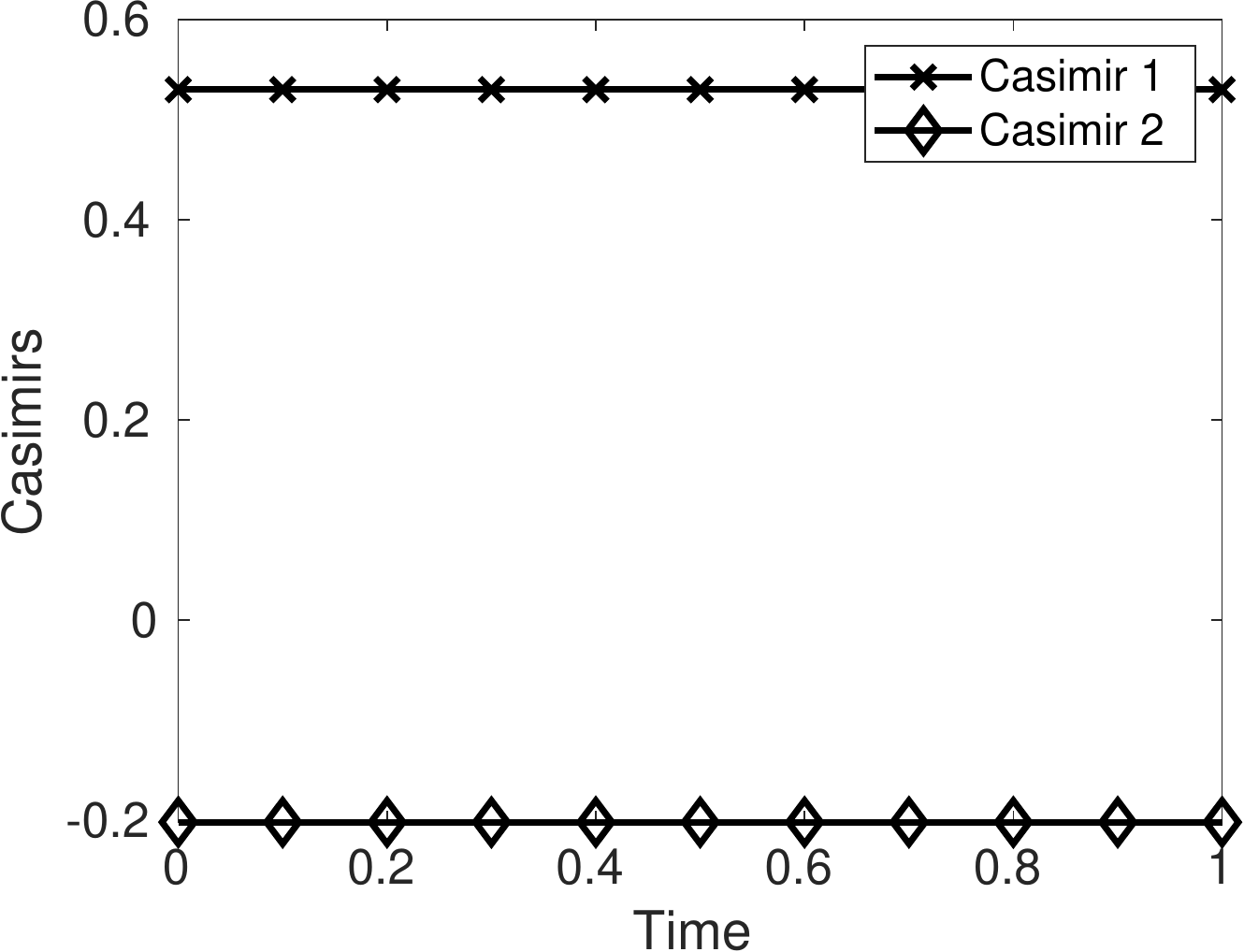}
\caption{Stochastic sine--Euler system: Preservation of the two Casimir functions by the explicit stochastic Poisson integrator~\eqref{stochSEI}.}
\label{fig:trajSE}
\end{figure}

\subsubsection{Strong convergence of the explicit stochastic Poisson integrator for the stochastic sine--Euler system}

%As for the stochastic rigid body, the preservation of the Casimir $C_1(\omega)$ by the splitting scheme allows to show strong and weak error estimates. 

Like for the stochastic rigid body system, we have the following convergence result due to the preservation of the Casimir function $C_1$. 
\begin{proposition}\label{thm-se}
Consider a numerical discretisation of the stochastic sine--Euler system~\eqref{stochSE} by the explicit stochastic Poisson integrator~\eqref{srbI}. 
Then, the strong order of convergence of this scheme is $1/2$, and the weak order of convergence is $1$. If the system is driven by a single Wiener process, the strong order of convergence is $1$.
\end{proposition}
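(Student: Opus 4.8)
The plan is to deduce Proposition~\ref{thm-se} directly from the general convergence result Theorem~\ref{thm-general}, in exactly the same way as the stochastic rigid body case was handled in Proposition~\ref{thm-srb}. To invoke that theorem for the stochastic sine--Euler system~\eqref{stochSE} discretised by the splitting scheme~\eqref{stochSEI}, I must check its two standing requirements: first, that the system admits a Casimir function with compact level sets; second, that the structure matrix $B$ and the split Hamiltonians $H_{\bf k},\widehat H_{\bf k}$ enjoy the regularity demanded for the strong and weak statements.

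First I would dispatch the regularity hypotheses. Since~\eqref{stochSE} is a Lie--Poisson system, the structure matrix $B(\omega)$ depends linearly on $\omega$ and is therefore a polynomial, hence of class $\mathcal{C}^\infty$. Likewise each Hamiltonian function $H_{\bf k}(\omega)=\widehat H_{\bf k}(\omega)$ is a quadratic form in $\omega$ and thus $\mathcal{C}^\infty$ as well. Consequently every smoothness assumption of Theorem~\ref{thm-general}, up to $\mathcal{C}^5$ for $B$ and the $H_{\bf k}$ and $\mathcal{C}^6$ for the $\widehat H_{\bf k}$ in the weak case, holds automatically, so no regularity issue arises.

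Next I would establish the compact level set property. Recall from Example~\ref{expl-SE} that $C_1$ is a Casimir function of~\eqref{stochSE}. Expressing the state in real coordinates, that is, writing each independent mode $\omega_{\bf k}$ via its real and imaginary parts under the Hermitian symmetry constraint so that the phase space is $\R^d$ with $d=8$, one sees that $C_1(\omega)=\sum_{\bf k}|\omega_{\bf k}|^2$ is a positive definite quadratic form. Hence for every $c\in\R$ the level set $\{C_1=c\}$ is either empty, a single point, or a sphere, and is compact in all cases. With a Casimir function of compact level sets available, Propositions~\ref{propo:global} and~\ref{propo:numerik} furnish global well-posedness together with almost sure bounds on the exact and numerical solutions that are uniform in the time step size, which is precisely the hypothesis under which Theorem~\ref{thm-general} operates.

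It then suffices to apply Theorem~\ref{thm-general}, which yields strong order $1/2$ and weak order $1$; when the system is driven by a single Wiener process the commutative noise condition is trivially satisfied ($m=1$), and the same theorem upgrades the strong rate to $1$. The only point that requires genuine care is the second step, namely confirming coercivity of $C_1$ once the complex variables and the Hermitian symmetry have been correctly translated into the real phase space of dimension $d=8$; beyond this bookkeeping I expect no real obstacle, as the remainder of the argument is a verbatim transcription of the rigid body proof.
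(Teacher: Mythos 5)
Your proposal is correct and follows exactly the paper's own argument: exhibit the Casimir function $C_1$, note that its level sets are compact (it is a positive definite quadratic form in the real coordinates), and invoke Theorem~\ref{thm-general}, with the single-noise case handled by the commutative noise ($m=1$) refinement. The extra checks you spell out (polynomial regularity of $B$ and the Hamiltonians, the real-coordinate bookkeeping under Hermitian symmetry) are left implicit in the paper but are exactly the right points to verify.
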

%The proof of this result is similar to the one of Proposition~\ref{thm-srb}. 

As already explained, the convergence result above is not trivial since the scheme is explicit: since the coefficients of the equations are not globally Lipschitz continuous, the explicit Euler--Maruyama scheme does not converge in the strong sense.

\begin{proof}
% The coefficients of the stochastic rigid body~\eqref{srb} are not globally Lipschitz continuous and does a priori 
% not fulfill the conditions of the auxiliary Lemma~\ref{thm-conv}. 
% However, the splitting scheme \eqref{srbI} preserves the Casimir $C(y)=y_1^2+y_2^2+y_3^2$, 
% that is the norm of the solution. Hence, one can replace the functions in the right-hand side of the SDE~\eqref{srb} 
% by functions being zero outside a suitable ball and fulfilling the conditions of the auxiliary Lemma~\ref{thm-conv}, see for instance \cite{cdr20}. 
% One then concludes the proof of this proposition by applying this general convergence theorem.
The stochastic Poisson system~\eqref{stochSE} admits the Casimir function $\omega\mapsto C_1(\omega)=|\omega_1|^2+\ldots+|\omega_4|^2$, which has compact level sets.  
It then suffices to apply the general convergence result, Theorem~\ref{thm-general}, which 
concludes the proof of Proposition~\ref{thm-se}.
\end{proof}

We now numerically illustrate the strong rate of convergence of the proposed integrator~\eqref{stochSEI}
when applied to the SDE \eqref{stochSE}. The final time is $T=1$ and the initial value is 

$\omega(0)=(\omega_{(1,0)}(0),\omega_{(1,1)}(0),\omega_{(0,1)}(0),\omega_{(-1,1)}(0))=(0.1+0.3i, 0.2+0.3i, 0.3+0.2i, 0.4+0.1i)$. The reference solution is computed using the proposed scheme with time step size $h_{\text{ref}}=2^{-15}$, and the scheme is applied with the range of time step sizes $h=2^{-5},\ldots,2^{-13}$. The expectation is approximated averaging the error over $M_s=500$ independent Monte Carlo samples. The results are presented in Figure~\ref{fig:msSE}. On the left, $\sigma_1=1$ and $\sigma_j=0$, for $j=2,\ldots,4$: we observe an order of convergence equal to $1$, which confirms the result in Proposition~\ref{thm-se} when the system is driven by a single Wiener process. On the right, $\sigma_j=1$, for $j=1,\ldots,4$, which means that the system is driven by four independent Wiener processes. We observe an order of convergence equal to $1/2$, which confirms the result in Proposition~\ref{thm-se}.

%The left part of the figure is when considering the problem with one noise ($\sigma_j=0$, for $j=2,\ldots,4$), 
%hence mean-square order $1$ 
%is observed. The right part of the figure is with four noise, and a mean-square order $1/2$ is observed. 
%Since the numerical results for the weak errors are similar to the other examples, 
%we do not provide such numerical experiments for the stochastic sine--Euler system.

\begin{figure}[h]
\centering
\includegraphics*[width=0.48\textwidth,keepaspectratio]{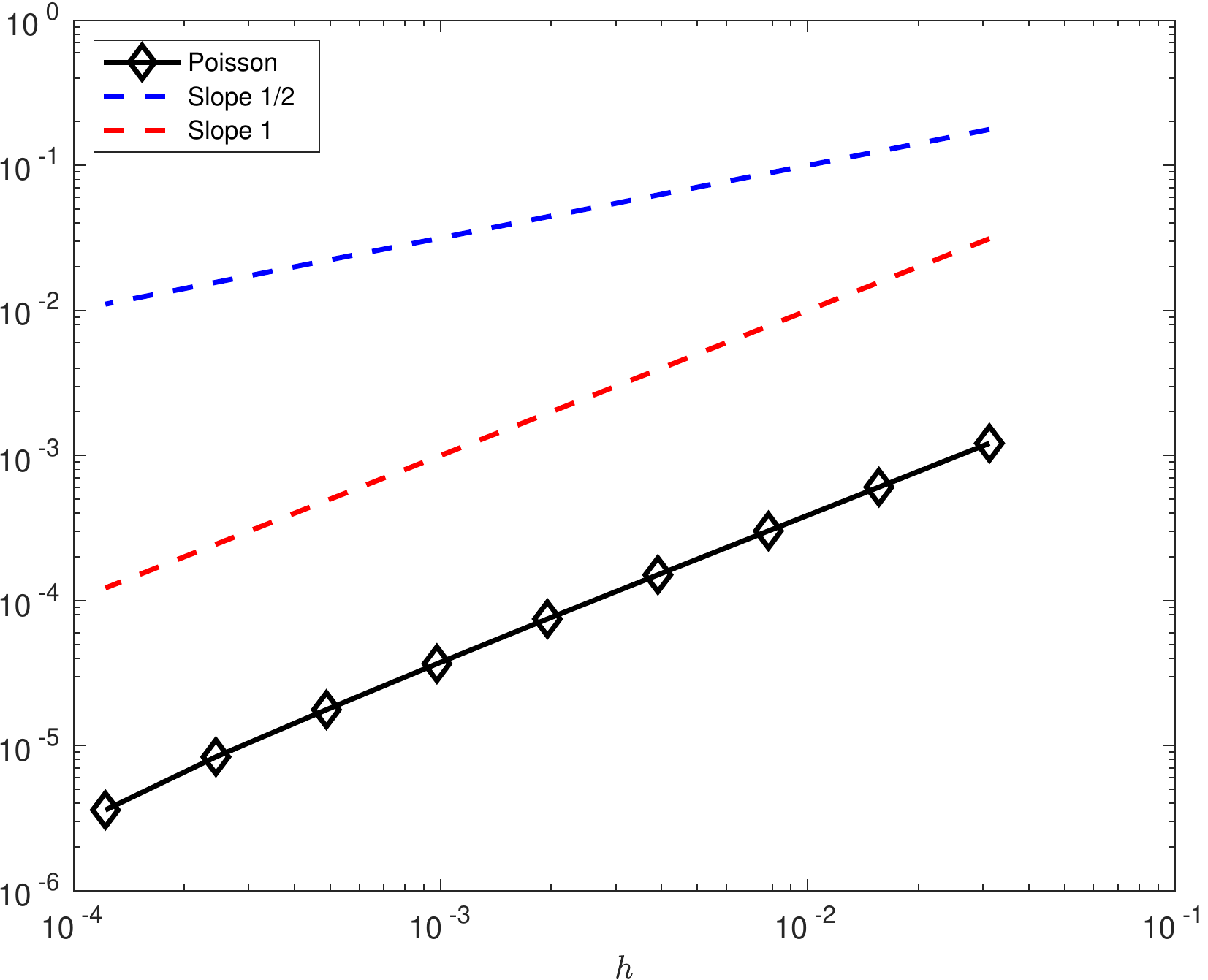}
\includegraphics*[width=0.48\textwidth,keepaspectratio]{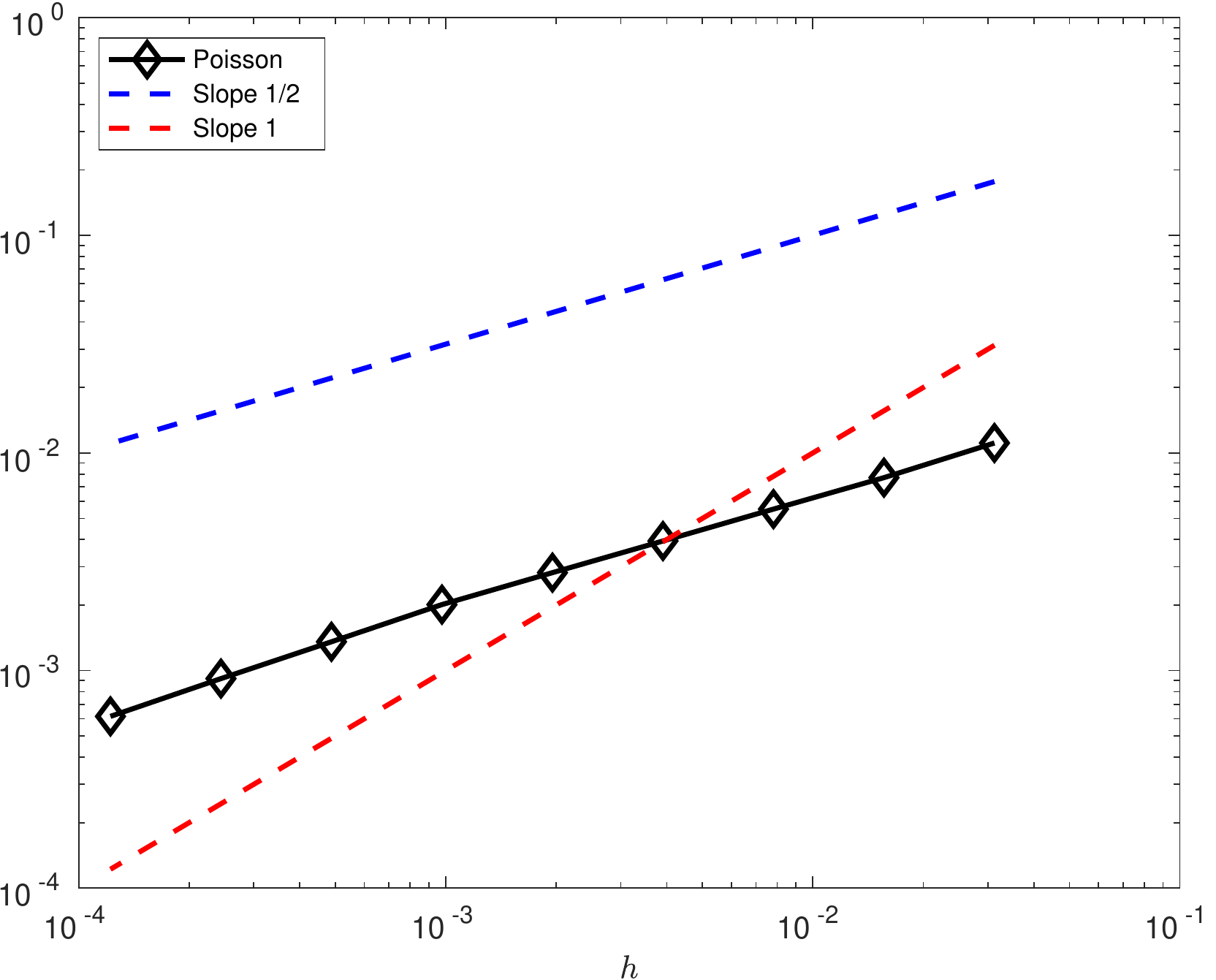}
\caption{Stochastic sine--Euler system: Strong errors of the explicit stochastic Poisson integrator~\eqref{stochSEI}. Left: single noise. Right: multiple noise.}
\label{fig:msSE}
\end{figure}

\appendix
\section{Proof of Lemma~\ref{lemm-aux}}\label{app-auxlem}
The goal of this section is to present the proof of Lemma~\ref{lemm-aux}. The arguments are standard and not specific to the present stochastic Poisson systems and integrators, and they are provided for the convenience of the reader.

Recall that the auxiliary SDE~\eqref{auxSDE} is given by
\[
\diff z(t)=\sum_{k=1}^{p}f_k(z(t))\,\diff t+\sum_{k=1}^m\widehat f_k(z(t))\circ\,\diff W_k(t),
\]
and that the associated splitting integrator~\eqref{auxscheme} is given by
\begin{equation}
z^{[n]}=\varphi_p(h,\cdot)\circ\ldots\circ \varphi_1(h,\cdot)\circ\widehat\varphi_m(\Delta W_m^n,\cdot)\circ \ldots\circ \widehat\varphi_1(\Delta W_1^n,\cdot)(z^{[n-1]}).
\end{equation}

\begin{proof}[Proof of Lemma~\ref{lemm-aux}]
To establish the strong convergence estimate~\eqref{eq:strongaux}, we apply the fundamental theorem on the strong order of convergence, see~\cite[Theorem~1.1]{MR2069903}. Note that the It\^o formulation of the SDE~\eqref{auxSDE} is an equation with globally Lipschitz nonlinearities, due to the assumptions on $f_1,\ldots,f_p$ and $\widehat f_1,\ldots,\widehat f_m$. Since the coefficients $f_k$ and $\widehat{f}_k$ do not depend on time, it is sufficient to prove the following local error estimates:
\begin{align}\label{strongorderconditions}
\left(\mathbb E\left[ \norm{ z(h)-z^{[1]} }^2 \right] \right)^{1/2}\leq \CC\bigl(1+\norm{z_0}^2\bigr)^{1/2}h\quad\text{and}\nonumber\\
\quad\norm{\mathbb E\left[z(h)-z^{[1]}\right]}\leq \CC\bigl(1+\norm{z_0}^2\bigr)^{1/2} h^{3/2},
\end{align}
for some real number $\CC\in(0,\infty)$ which does not depend on the time step size $h$. In the proof, the value of $\CC$ may change from line to line.

%\ceb{replace $\bigl(1+\norm{z_0}^2\bigr)^{1/2}$ by $\bigl(1+\norm{z_0}\bigr)$?} \coco{ok for me}

The proof is based on the comparison of Stratonovich--Taylor expansions of the exact and numerical solutions. On the one hand, the exact solution of the auxiliary SDE~\eqref{auxSDE} satisfies the following Stratonovich--Taylor expansion formula: 
for all $t\ge 0$,
\begin{align*}
z(t)-z(0)&=\sum_{k=1}^p\int_0^t f_k(z(s))\diff s+\sum_{k=1}^m\int_0^t \widehat f_k(z(s))\circ\diff W_k(s) \\
&=\sum_{k=1}^pf_k(z(0))t+\sum_{k=1}^m\widehat f_k(z(0))W_k(t)\\
&\quad+\sum_{k,\ell=1}^m\widehat f_k'(z(0))\widehat f_\ell(z(0))\int_0^t\int_0^s\diff W_\ell(r)\circ\diff W_k(s)\\
&\quad+R_e(t,z_0),
\end{align*}
where the random variable $R_e(t,z_0)$ satisfies
\[
\left(\mathbb E\left[\norm{R_e(t,z_0)}^{2}\right]\right)^{1/2}\le \CC(1+\norm{z_0}^2)^{1/2} t^{3/2}.
\]
On the other hand, we claim that the numerical solution satisfies the following local error expansion:
\begin{align*}
z^{[1]}-z^{[0]}&=\sum_{k=1}^pf_k(z^{[0]})h+\sum_{k=1}^m\widehat f_k(z^{[0]})W_k(h)\\
&\quad+\frac12\sum_{k=1}^{m}\widehat f_k'(z^{[0]})\widehat f_k(z^{[0]})W_k(h)^2\\
&\quad+\sum_{1\le\ell<k\le m}\widehat f_k'(z^{[0]})\widehat f_\ell(z^{[0]})W_k(h)W_\ell(h)\\
&\quad+R_s(h,z_0),
\end{align*}
where the random variable $R_s(h,z_0)$ satisfies
\[
\left(\mathbb E\left[\norm{R_s(h,z_0)}^{2}\right]\right)^{1/2}\le \CC(1+\norm{z_0}^2)^{1/2} h^{3/2}.
\]
Finally, recall that for all $h>0$ and all $k=1,\ldots,m$,
\begin{itemize}
\item $\int_0^h\int_0^s\diff W_k(r)\circ\diff W_k(s)=\frac12 W_k(h)^2$,
\item $\mathbb E[\int_0^h\int_0^s\diff W_\ell(r)\circ\diff W_k(s)]=0$ if $\ell\neq k$,
\item $\mathbb E\left[\norm{\int_0^h\int_0^s\diff W_\ell(r)\circ\diff W_k(s)}^2\right]={\rm O}(h^2)$.
\end{itemize}
Comparing the Stratonovich--Taylor expansions of the exact and numerical solutions then provides the local error estimates~\eqref{strongorderconditions}. As already explained, the local error estimates~\eqref{strongorderconditions} imply the strong error estimate~\eqref{eq:strongaux}. Thus, it remains to prove the claim above for the local error expansion of the numerical solution.

Note that the arguments below illustrate why the independence of $W_1,\ldots,W_m$ is essential for the consistency of the scheme (see Remark~\ref{rem-consistent}). To simplify the notation, we write ${\rm O}(h^\alpha)$ for random variables $r$ 
which satisfy $\left(\mathbb E\left[\norm{r}^2\right]\right)^{1/2}\le \CC(1+|z^{[0]}|^2)^{1/2}h^\alpha$ for some real number $\CC\in(0,\infty)$ which does not depend on $h$.

The local error $z^{[1]}-z^{[0]}$ is decomposed as
\[
z^{[1]}-z^{[0]}=(Z_{p}-Z_{p-1})+\ldots+(Z_1-\widehat{Z}_m)+(\widehat{Z}_{m}-\widehat{Z}_{m-1})+\ldots+(\widehat{Z}_1-\widehat{Z_0}),
\]
where $\widehat{Z}_0=z^{[0]}$, $\widehat{Z}_k=\widehat\varphi_k(W_k(h),\widehat{Z}_{k-1})$  
for all $k=1,\ldots,m$, 
$Z_0=\widehat{Z}_m$ and $Z_k=\varphi_k(h,Z_{k-1})$. Observe that $z^{[1]}=Z_p$.

For all $k\in\{1,\ldots,m\}$, the random variables $\widehat{Z}_{k-1}$ and $W_k(h)$ are independent 
(since by construction $\widehat{Z}_{k-1}$ only depends on $W_1(h),\ldots,W_{k-1}(h)$). 
Since $\widehat\varphi_k$ is the flow associated with the ODE $\dot{z}_k=\widehat f_k(z_k)$, a Stratonovich--Taylor 
expansion for the solution $t\in[0,h]\mapsto \widehat\varphi_k(W_k(t),\widehat{Z}_{k-1})$ of the Stratonovich SDE 
$\diff \widehat{z}_k=\widehat f_k(\widehat{z}_k)\circ \diff W_k(t)$ yields
\begin{align*}
\widehat{Z}_k-\widehat{Z}_{k-1}&=\widehat\varphi_{k}(W_k(h),\widehat{Z}_{k-1})-\widehat{Z}_{k-1}\\
&=
W_k(h)\widehat f_k(\widehat{Z}_{k-1})+\frac12 W_k(h)^2\widehat f_k'(\widehat{Z}_{k-1})\widehat f_k(\widehat{Z}_{k-1})+{\rm O}(h^{3/2}).
\end{align*}
Using this result successively for $k=1,\ldots,m$, one gets the more precise results 
\begin{align*}
\widehat{Z}_k-z^{[0]}&=\sum_{\ell=1}^{k}W_{\ell}(h)\widehat f_\ell(\widehat Z_{\ell-1})+{\rm O}(h)\\
&={\rm O}(h^{1/2})\\
&=\sum_{\ell=1}^{k}W_{{\ell}}(h)\widehat f_\ell(z^{[0]})+{\rm O}(h)
\end{align*}
using that $\widehat Z_{\ell-1}=z^{[0]}+O(h^{1/2})$ in the last step. Finally, one has 
\begin{align*}
\widehat{Z}_k-\widehat{Z}_{k-1}&=W_k(h)\widehat f_k(z^{[0]})+\frac12 W_k(h)^2\widehat f_k'(z^{[0]}){\widehat f}_k(z^{[0]})\\
&\quad+\sum_{\ell=1}^{k-1}W_k(h)W_\ell(h)\widehat f_k'(z^{[0]}){\widehat f}_\ell(z^{[0]})+{\rm O}(h^{3/2}).
\end{align*}
Similarly, for all $k\in\{1,\ldots,p\}$, one obtains in a first step
\[
Z_k-Z_{k-1}=\varphi_k(h,Z_{k-1})-Z_{k-1}=hf_k(Z_{k-1})+{\rm O}(h^2),
\]
and in a second step (recall that $Z_0=\widehat{Z}_m=z^{[0]}+{\rm O}(h^{1/2})$)
\[
Z_k-Z_{k-1}=hf_k(Z_0)+{\rm O}(h^2)=hf_k(z^{[0]})+{\rm O}(h^{3/2}).
\]
Summing all the local error terms then concludes the proof of the claim, i.e. of the Taylor--Stratonovich expansion formula for the numerical solution.

Note that in the commutative noise case (in particular, when $m=1$), one has the identity
\[
\sum_{1\le\ell<k\le m}\widehat f_k'(z^{[0]})\widehat f_\ell(z^{[0]})W_k(h)W_\ell(h)=\sum_{1\le k\neq \ell\le m}\widehat f_k'(z(0))\widehat f_\ell(z(0))\int_0^h\int_0^s\diff W_\ell(r)\circ\diff W_k(s).
\]
Using the two Taylor--Stratonovich expansion formulas written above, one obtains the following version of the local error estimates~\eqref{strongorderconditions} in the commutative noise case:
\begin{align*}
\left(\mathbb E\left[ \norm{ z(h)-z^{[1]} }^2 \right] \right)^{1/2}\leq \CC\bigl(1+\norm{z_0}^2\bigr)^{1/2}h^{3/2}\quad\text{and}\\
\quad\norm{\mathbb E\left[z(h)-z^{[1]}\right]}\leq \CC\bigl(1+\norm{z_0}^2\bigr)^{1/2} h^{3/2}.
\end{align*}
Owing to the fundamental theorem on the strong order of convergence, see~\cite[Theorem~1.1]{MR2069903}, the strong order of convergence is thus equal to $1$ in the commutative noise case.

This concludes the proof of the strong error estimate~\eqref{eq:strongaux}, in the general and in the commutative noise cases.

Let us now prove the weak error estimate~\eqref{eq:weakaux}. To simplify the notation, we only prove the weak error estimate for $n=N$. Indeed, the extension to the case 
$n=0,\ldots,N$ with a uniform upper bound is straightforward and details are omitted.

The proof is a variant of the Talay--Tubaro argument, see~\cite{MR1091544}: the weak error is written in terms of the solution of a backward Kolmogorov equation. First, the infinitesimal generator $\mathcal{L}$ of the auxiliary SDE~\eqref{auxSDE} is written as
\[
\mathcal{L}=\sum_{k=1}^{p}\mathcal{L}_k+\sum_{k=1}^{m}\widehat{\mathcal{L}}_k,
\]
where, for all $k\in\{1,\ldots,p\}$, $\mathcal{L}_k=f_k\cdot\nabla$ is associated with the ODE $\dot{z}_k=f_k(z_k)$, 
and for all $k\in\{1,\ldots,m\}$, $\widehat{\mathcal{L}}_k=\frac12f_k'f_k \cdot \nabla+\frac12 f_k f_k^T:\nabla^2$ is associated with the SDE $\diff z_k=\widehat f_k(z_k)\circ \diff W_k(t)$ rewritten in It\^o form as $\diff z_k=\frac12f_k'(z_k)f_k(z_k)+\widehat f_k(z_k)\diff W_k(t)$. The formula for $\mathcal{L}$ above employs the independence of the Wiener processes $W_1,\ldots,W_m$. 

Introduce the auxiliary function $u$ which is the solution of the backward Kolmogorov equation
\[
\partial_tu(t,z)=\mathcal{L}u(t,z)
\]
for all $t\in[0,T]$ and $z\in\R^d$, with the initial condition $u(0,\cdot)=\phi$, assumed to be of class $\mathcal{C}^4$ with bounded derivatives. Since the mappings $f_1,\ldots,f_p$ are of class $\mathcal{C}^4$ and the mappings $\widehat f_1,\ldots,\widehat f_m$ are bounded and of class $\mathcal{C}^5$, with bounded derivatives of any order, the function $u$ is of class $\mathcal{C}_b^{2,4}$, with bounded derivatives of any order. This means that $t\mapsto u(t,z)$ is of class $\mathcal{C}^2$ for all $z\in\R^d$, and that $z\mapsto u(t,z)$ is of class $\mathcal{C}^4$, with bounded derivatives. The proof is omitted, see for instance~\cite{MR1840644} for such results (after transformation to the It\^o formulation).

The weak error at time $T=Nh$ is expressed in terms of the solution $u$ of the Kolmogorov equation as follows (using a standard telescoping sum argument):
\begin{align*}
\mathbb E[\phi(z(Nh))]-\mathbb E[\phi(z^{[N]})]&=\mathbb E[u(Nh,z_0)]-\mathbb E[u(0,z^{[N]})]\\
&=\sum_{n=0}^{N-1}\bigl(\mathbb E[u(T-t_n,z^{[n]})]-\mathbb E[u(T-t_{n+1},z^{[n+1]})]\bigr)\\
&=\sum_{n=0}^{N-1}\bigl(\mathbb E[u(T-t_n,z^{[n]})]-\mathbb E[u(T-t_{n+1},z^{[n]})]\bigr)\\
&\quad+\sum_{n=0}^{N-1}\bigl(\mathbb E[u(T-t_{n+1},z^{[n]})]-\mathbb E[u(T-t_{n+1},z^{[n+1]})]\bigr),
\end{align*}
with $t_n=nh$, and recalling that $h=T/N$ and $z(0)=z^{[0]}=z_0$.

The first term in the right-hand side of the weak error decomposition above is treated as follows:
\[
\mathbb E[u(T-t_n,z^{[n]})]-\mathbb E[u(T-t_{n+1},z^{[n]})]=h\mathbb E[\partial_tu(T-t_{n+1},z^{[n]})]+{\rm O}(h^2)
\]
by a straightforward Taylor expansion of $u$ with respect to the time variable. 
Indeed, the mapping $t\mapsto u(T-t,z^{[n]})$ is of class $\mathcal{C}^2$ with bounded first and second order derivatives.
To simplify the exposition, we use the notation ${\rm O}(h^2)$ for the reminder terms $r$, which satisfy $|r|\le \CC h^2$ for some real number $\CC\in(0,\infty)$, uniformly with respect to $n$.

To obtain the weak convergence result, it suffices to prove the following claim: 
for any function $\psi\colon\R^d\to\R$ of class $\mathcal{C}^4$ with bounded derivatives, one has
\[
\mathbb E[\psi(z^{[n+1]}) | z^{[n]}]=\psi(z^{[n]})+h\mathcal{L}\psi(z^{[n]})+{\rm O}(h^2).
\]
Indeed, applying that claim with $\psi=u(T-t_{n+1},\cdot)$, for each $n=0,\ldots,N-1$, in the above decomposition of the weak error  
and taking expectation, one obtains
\[
\mathbb E[\phi(z(Nh))]-\mathbb E[\phi(z^{[N]})]=h\sum_{n=0}^{N-1}\mathbb E[(\partial_tu-\mathcal{L}u)(T-t_{n+1},z^{[n]})]+{\rm O}(h)={\rm O}(h),
\]
since $u$ is the solution of the Kolmogorov equation $\partial_tu=\mathcal{L}u$.

It remains to prove the claim. This is done for $n=0$ and the general case is obtained using the Markov property. 
We employ the same notation as in the proof of the strong convergence estimate above. 
First, observe that
\[
\mathbb E[\psi(z^{[1]})]=\mathbb E[\psi(Z_p)]=\mathbb E[\psi(\varphi_p(h,Z_{p-1}))]=\mathbb E[e^{h\mathcal{L}_p}\psi(Z_{p-1})],
\]
where, by definition, $u_k(t,\cdot)=e^{t\mathcal{L}_k}\psi$ is the solution of the partial differential equation $\partial_tu_k=\mathcal{L}_ku_k$, 
with $u_k(0,\cdot)=\psi$, associated with the ODE $\dot{z}_k=f_k(z_k)$.

It is then straightforward to check that 
\[
\mathbb E[\psi(z^{[1]})]=\mathbb E[e^{h\mathcal{L}_1}\ldots e^{h\mathcal{L}_p}\psi(Z_0)]=
\mathbb E[e^{h\mathcal{L}_1}\ldots e^{h\mathcal{L}_p}\psi(\widehat{Z}_m)].
\]
It remains to treat the stochastic part in the splitting scheme. 
To simplify notation, let $\widehat{\psi}=e^{h\mathcal{L}_1}\ldots e^{h\mathcal{L}_p}\psi$. 
Recall that $\widehat{Z}_m=\varphi_m(W_m(h),\widehat{Z}_{m-1})$. Since $W_m(h)$ and $\widehat{Z}_{m-1}$ 
are independent ($\widehat{Z}_{m-1}$ depends only on $W_{m-1}(h),\ldots,W_1(h)$), one has
\[
\mathbb E[\widehat{\psi}(\widehat{Z}_m)]=\mathbb E\left[\mathbb E[\widehat{\psi}(\varphi_m(W_m(h),\widehat{Z}_{m-1}))|\widehat{Z}_{m-1}]\right]=
\mathbb E[e^{h\widehat{\mathcal{L}}_m}\widehat\psi(\widehat{Z}_{m-1})],
\]
where, by definition, $\widehat{u}_k(t,\cdot)=e^{t\widehat{\mathcal{L}}_k}\widehat{\psi}$ is the solution of 
the partial differential equation $\partial_t\widehat{u}_k=\widehat{\mathcal{L}}_k\widehat{u}_k$, 
with $\widehat{u}_k(0,\cdot)=\widehat{\psi}$, associated with the SDE $\diff z_k=\widehat f_k(z_k)\circ \diff W_k$.

It is then straightforward to check that 
\[
\mathbb E[\widehat{\psi}(\widehat{Z}_m)]=\mathbb E[e^{h\widehat{\mathcal{L}}_1}\ldots e^{h\widehat{\mathcal{L}}_m}\widehat{\psi}(\widehat{Z}_0)]=e^{h\widehat{\mathcal{L}}_1}\ldots e^{h\widehat{\mathcal{L}}_m}\widehat{\psi}(z^{[0]}),
\]
using the independence of $W_k(h)$ and $\widehat{Z}_{k-1}$ for each $k=m-1,\ldots,1$ successively, 
and the equality $Z_0=z^{[0]}$ at the last step.

Finally, it remains to use the fact that, if $\psi$ is of class $\mathcal{C}^4$ with bounded derivatives, one has
\begin{align*}
e^{h\widehat{\mathcal{L}}_1}\ldots e^{h\widehat{\mathcal{L}}_m}\widehat{\psi}&=e^{h\widehat{\mathcal{L}}_1}\ldots e^{h\widehat{\mathcal{L}}_m}e^{h\mathcal{L}_1}\ldots e^{h\mathcal{L}_p}\psi\\
&=\psi+h\sum_{k=1}^{m}\widehat{\mathcal{L}}_k\psi+h\sum_{k=1}^{p}\mathcal{L}_k\psi+{\rm O}(h^2)\\
&=\psi+h\mathcal{L}\psi+{\rm O}(h^2)
\end{align*}
to conclude the proof of the claim.

This concludes the proof of the weak error estimate, and of the auxiliary lemma.
\end{proof}

\section{Acknowledgements}
% We appreciate the referees' comments on an earlier version of the paper.
The authors would like to thank Klas Modin, Gilles Vilmart, and Milo Viviani for interesting discussions. 
They also would like to thank Beno\^it Fabr\`eges for his help for the numerical experiments on the weak error.  
The work of C.-E.~B. is partially supported by the  project SIMALIN (ANR-19-CE40-0016) operated by the French National Research Agency. 
The work of DC was partially supported by the Swedish Research Council (VR) (projects nr. $2018-04443$). 
The work of TJ was funded by the Deutsche Forschungsgemeinschaft (DFG, German Research Foundation) – Project-ID 258734477 – SFB 1173.
Some computations were performed on resources provided by the Swedish National Infrastructure 
for Computing (SNIC) at HPC2N, Ume{\aa} University and at UPPMAX, Uppsala University. 

\bibliographystyle{plain}
\bibliography{biblo}
\end{document}